\numberwithin{equation}{section}
\theoremstyle{plain}
\newtheorem{lemma}{Lemma}[section]
\newtheorem{proposition}[lemma]{Proposition}
\newtheorem{theorem}[lemma]{Theorem}
\newtheorem{corollary}[lemma]{Corollary}
\theoremstyle{definition}
\newtheorem{definition}[lemma]{Definition}
\newtheorem{remark}[lemma]{Remark}
\newtheorem{example}[lemma]{Example}
\newcommand{\R}{{\mathbb R}}
\newcommand{\F}{{\mathbb F}}
\newcommand{\Z}{{\mathbb Z}}
\newcommand{\Aa}{{\mathcal A}}
\newcommand{\Bb}{{\mathcal B}}
\newcommand{\Cc}{{\mathcal C}}    
\newcommand{\Dd}{{\mathcal D}}
\newcommand{\Hh}{{\mathcal H}}
\newcommand{\Kk}{{\mathcal K}}
\newcommand{\Ll}{{\mathcal L}}    
\newcommand{\Qq}{{\mathcal Q}}
\newcommand{\Om}{{\Omega}}
\newcommand{\eps}{{\varepsilon}}
\newcommand{\la}{\langle}
\newcommand{\ra}{\rangle}
\newcommand{\spann}{\mbox{\rm span}}
\newcommand{\Isom}{\mbox{{\rm Isom}}}
\newcommand{\Diffeo}{\mbox{{\rm Diffeo}}}
\newcommand{\Transl}{\mbox{{\rm Transl}}}
\date{\today}
\title[Almost formality of  manifolds of low dimension]
{Almost formality of manifolds of low dimension}
\author[D. Fiorenza]{Domenico Fiorenza}
\address{
Dipartimento di Matematica ``Guido Castelnuovo'',
Universit\`a   di Roma ``La Sapienza",
Piazzale Aldo Moro 2, 00185 Roma, 
Italy}
\email{fiorenza@mat.uniroma1.it} 
\author[K. Kawai]{Kotaro Kawai}
\address{Gakushuin University, 1-5-1, Mejiro, Toshima, Tokyo, 171-8588, Japan}
\email{kkawai@math.gakushuin.ac.jp}
\author[H.V. L\^e]{H\^ong V\^an L\^e}
\address{Institute of Mathematics, Czech Academy of Science,
Zitna 25, 11567 Praha 1, Czech Republic}
\email{hvle@math.cas.cz}
\author[L. Schwachh\"ofer]{Lorenz Schwachh\"ofer}
\address{Faculty for Mathematik,
TU Dortmund University,
Vogelpothsweg 87, 44221 Dortmund, Germany}
\email{lschwach@math.tu-dortmund.de}
\thanks{The second named author is supported by JSPS KAKENHI Grant Numbers JP17K14181, and 
 the research of  the third named author is  supported by the GA\v CR-project 18-00496S and  RVO:67985840. The fourth named author was supported by the Deutsche Forschungsgemeinschaft by grant SCHW 893/5-1.}
\begin{document}
	
\abstract  In this  paper  we introduce the notion of Poincar\'e DGCAs of Hodge type, which is a subclass of Poincar\'e DGCAs  encompassing the de Rham algebras of closed 
orientable   manifolds. Then we introduce  the  notion of the small algebra  and the small quotient algebra of a  Poincar\'e  DGCA of Hodge type.
Using these concepts,
 we investigate  the  equivalence class  of $(r-1)$ connected $(r>1)$  Poincar\'e DGCAs of Hodge type. In particular, we show  that a $(r-1)$ connected Poincar\'e  DGCA of Hodge type $\Aa^\ast$ of dimension $n \le 5r-3$   is $A_\infty$-quasi-isomorphic  to  an $A_3$-algebra  and  prove that  the only obstruction
to the formality of $\Aa^\ast$ is a  distinguished  Harrison cohomology class  
$[\mu_3] \in {\mathsf{Harr}}^{3,-1} (H^*(\Aa^\ast), H^*(\Aa^\ast))$.  Moreover, the cohomology class  
$[\mu_3]$ and the DGCA isomorphism class of $H^*(\Aa^\ast)$ determine the $A_\infty$-quasi-isomorphism class of $\Aa^\ast$. This can be seen as a Harrison cohomology version of  the  Crowley-Nordstr\"om  results \cite{CN} on rational homotopy type  of  $(r-1)$-connected   $(r>1)$ closed manifolds of dimension up to $5r-3$.   We also   derive the   almost  formality of  closed  $G_2$-manifolds,  which have been discovered  recently  by  Chan-Karigiannis-Tsang  in \cite{CKT}, from our results and the
Cheeger-Gromoll splitting   theorem.
\endabstract

\keywords{Poincar\'e differential graded algebra, formality, $A_\infty$-algebra, Massey product, Cheeger-Gromoll  splitting theorem, $G_2$-manifold}
\subjclass[2010]{Primary:57R19, Secondary:53C25, 53C29, 58A10 }

\maketitle

\section{Introduction}\label{sec:intr}

By the seminal work of Sullivan \cite{Sullivan} it is known that the equivalence class of the de Rham algebra of a closed simply connected manifold determines its real homotopy type. Thus, it is desirable to obtain invariants which measure the formality of the de Rham algebra,  and more generally identify the rational homotopy type of a manifold, and the present paper aims to describe such invariants under suitable highly enough connectedness assumptions. Assuming (as we shall always do) that the underlying manifold is closed, connected and oriented, the de Rham algebra is a {\em Poincar\'e differential graded commutative algebra}, meaning that there is a top homology class which induces a non-degenerate pairing on the cohomology algebra.

This work is inspired by two recent approaches to this question. The first is the article \cite{CKT} where it is shown that the de Rham algebra of a $G_2$-manifold (i.e., a closed $7$-manifold with a torsion-free $G_2$-structure) is {\em almost formal}, i.e., equivalent to a DGCA all of whose differentials vanish except in the middle degree. Second, in \cite{CN} a new invariant for a Poincar\'e DGCA is introduced, called the {\em Bianchi-Massey tensor}  
and it is shown that this tensor is the only obstruction to formality of $(r-1)$-connected $(r >1)$ closed
manifolds in dimension at most $5r-3$. In fact, the de Rham algebra of such a manifold is equivalent to one with non-vanishing differential in certain dimensions only, and for $r = 2$, \cite{CN} implies that any simply connected closed
$7$-manifold is almost formal in the sense of \cite{CKT}, i.e., the $G_2$-structure is not needed in the simply connected case.

Our approach is to apply the homotopy transfer theorem to associate to an equivalence class of Poincar\'e DGCAs of Hodge type an equivalence class of $A_\infty$-algebras. We show that a simply connected Poincar\'e DGCA of Hodge type is equivalent to a certain {\em finite dimensional }Poincar\'e DGCA of Hodge type, so that the associated $A_\infty$-structure is simpler to describe. For instance, in the case of a $(r-1)$-connected $(r>1)$ Poincar\'e DGCA of Hodge  type of dimension at most $5r-3$, the resulting $A_\infty$-algebra is an $A_3$-algebra and thus determined by its cohomology algebra $H^\ast(\Aa^\ast)$ and the distinguished Hochschild cohomology class  $[\mu_3] \in {\mathsf{Hoch}}^{3,-1} (H^\ast(\Aa^\ast), H^\ast(\Aa^\ast))\subseteq {\mathsf{Hoch}}^2 (H^\ast(\Aa^\ast), H^\ast(\Aa^\ast))$. Moreover, $\mu_3$ is actually a Harrison cocycle. Since in characteristic zero the natural morphism from Harrison cohomology to Hochschild cohomology of a DGCA is injective \cite{Barr1968}, this means that $[\mu_3]$ is equivalently a Harrison cohomology class.
As it is known that  two simply-connected spaces are rationally homotopy equivalent if and only  their de Rham complexes  are  $Comm_\infty$-quasi-isomorphic \cite[Theorem 27]{Valette2012}, \cite{Kadeishvili2009}, this implies that the rational homotopy type of a $(r-1)$-connected $(r> 1)$ closed
manifold $X$ of dimension at most $5r-3$ is completely determined by $H^\ast(X)$ and $[\mu_3] \in {\mathsf{Harr}}^{3,-1} (H^\ast(X), H^\ast(X))$. Relating this to the results in \cite{CN}, this means that $[\mu_3]$ is the Harrison cohomology interpretation of the Bianchi-Massey tensor. More precisely, the Bianchi-Massey tensor appears via the Harrison-to-Andr\'e-Quillen spectral sequence for  $\Aa^\ast$  \cite{Barr1968, Loday1992}.

The condition of non-simply connectedness may be weakened to the existence of a Riemannian metric on $M$ for which all harmonic $1$-forms are parallel. This is the case, for instance, if $M$ carries a metric of nonnegative Ricci curvature and, in particular, for $G_2$-manifolds as these are necessarily Ricci flat. Namely, by a  
generalization of the Cheeger-Gromoll splitting theorem (Theorem \ref{thm:Cheeger-Gromoll})  
it follows that in this case, the de Rham algebra is equivalent to a polynomial algebra $\Qq^\ast[t_1, \ldots, t_k]$ with $k = b^1(M)$ $|t_i| = 1$ where $\Qq^\ast$ is a Poincar\'e DGCA of lower degree
(Proposition  \ref{prop:reduce-b1}). In consequence, we show $7$-manifolds with nonnegative Ricci curvature and positive $b^1$ must be formal; in particular, this is the case for $G_2$-manifold whose holonomy is properly contained in $G_2$. Note that  the question of the formality of $G_2$-manifolds  is still open.

Our paper  is organized  as follows.  In  Section \ref{sec:hodge}  we 
 define and study Poincar\'e DGCA's admitting a Hodge type decomposition. 
  In  Section  \ref{sec:Poincare} we define the {\em small quotient  algebra} of such  DGCA's of Hodge type, a Poincar\'e DGCA equivalent to the original one, which is finite dimensional if  the DGCA is simply connected. 
In Section \ref{sec:ainfty}  we utilize the homotopy transfer theorem and related techniques to show that the rational homotopy type of a $(r-1)$ connected Poincar\'e  DGCA  $\Aa^\ast$ of dimension $n \le 5r-3$ is determined by $H^\ast(\Aa^\ast)$ and $[\mu_3] \in {\mathsf{Harr}}^{3,-1} (H^\ast(\Aa^\ast), H^\ast(\Aa^\ast))$. In   Section \ref{sec:h1form} we generalize the discussion from simply connected manifolds to those which admit a Riemannian metric all of whose harmonic $1$-forms are parallel.  
Finally, in Appendix \ref{appendix.massey} we   prove a few  results  on higher  Massey   products on connected  Poincar\'e DGCAs.  
 
 \

{\it Conventions and notations.} 

- In this paper we  consider only
DGCAs over a fixed  field  $\F$  of characteristic   0.

- For a DGCA $(\Aa^\ast, d)$ we denote  by $\Aa^\ast_d$ the   subspace of cocycles in $\Aa^\ast$  and by $H^\ast(\Aa^\ast)$  the cohomology of $(\Aa^\ast, d)$. The Betti numbers of $\Aa^\ast$ are defined as $b^k(\Aa^\ast) := \dim \Hh^k(\Aa^\ast)$ or simply by $b^k$. 

\section{Poincar\'e  DGCAs of Hodge   type}\label{sec:hodge}

In this  section we recall the definition of a Poincar\'e DGCA over $\F$ (Definition \ref{def:Poincare}, cf. \cite[Definition 2.7]{CN}).
Then we define Hodge type decompositions of Poincar\'e  DGCAs (Definition \ref{def:splitting-map})   $\Aa^\ast$, 
provide examples and investigate their properties.

\begin{definition}\label{def:Poincare}(cf. \cite[Definition 2.7]{CN})
\begin{enumerate}
\item
A {\em Poincar\'e-algebra of degree $n$} is a finite dimensional graded commutative algebra $H^\ast = \bigoplus_{k=0}^n H^k$ together with an element $\int \in (H^n)^\vee$, where the latter denotes the dual space of $H^n$, such that the pairing
\[
\langle \alpha^k,\beta^l \rangle := \begin{cases} \int \alpha^k\cdot \beta^l & \mbox{if $k+l=n$},\\ 0 & \mbox{else} \end{cases}
\]
is non-degenerate, i.e., $\langle \alpha, H^\ast\rangle = 0$ iff $\alpha = 0$.
\item
A {\em Poincar\'e-DGCA of degree $n$}  is a  DGCA 
$(\Aa^\ast, d)$ with $\Aa^\ast = \bigoplus_{k=0}^n \Aa^k$ whose cohomology ring $H^\ast := H^\ast(\Aa^\ast)$ is a Poincar\'e-algebra of degree $n$.
\end{enumerate}
\end{definition}

Note that in \cite[Definition 2.7]{CN}, the degree of a Poincar\'e-algebra is called the {\em dimension}, but as later we wish to consider the dimension of $\Aa^\ast$ as a graded vector space, the notion of degree seems more appropriate.

We begin our discussion by introducing some general terminology on pairings on a graded vector space.
A {\em graded vector space of degree $n$} is a vector space of the form $V^\ast = \bigoplus_{k=0}^n V^k$, and a bilinear pairing $\langle-,-\rangle$ on such a $V$ is called {\em graded symmetric}, if $\langle V^k, V^l\rangle = 0$ for $k+l \neq n$ or, equivalently, if the map $\langle-,-\rangle: V^\ast \otimes V^\ast \to \F$ has degree $-n$,  and factors through $\mathrm{Sym}^2(V)$, i.e., $\langle \alpha^k, \beta^l\rangle =  (-1)^{kl} \langle \beta^l, \alpha^k \rangle$ for any $\alpha^k\in V^k$ and any $\beta^l\in V^l$.
As usual, we define the orthogonal complement of a (graded) subspace $W^\ast \subseteq V^\ast$ as
\[
W^\ast_\perp := \{ \alpha \in V^\ast \mid \langle \alpha, W^\ast \rangle = 0\},
\]
and call $V^\ast_\perp$ the {\em null-space of $V^\ast$}. A subspace $W^\ast \subseteq V^\ast$ is called {\em non-degenerate } if $W^\ast \cap W^\ast_\perp = 0$. We call the pairing non-degenerate if $V^\ast$ is non-degenerate, i.e., iff $V^\ast_\perp = 0$. On the quotient $Q^\ast := V^\ast/V^\ast_\perp$, there is a unique non-degenerate pairing $\langle - , - \rangle_{Q^\ast}$ such that
\begin{equation} \label{eq:pairing-Q}
\langle \pi(\alpha), \pi(\beta)\rangle_{Q^\ast} = \langle \alpha, \beta \rangle,
\end{equation}
where $\pi: V^\ast \to Q^\ast$ is the canonical projection.

Let us now apply all of this to a Poincar\'e-DGCA $\Aa^\ast$ of degree $n$. The pairing $\langle-,-\rangle$ on $H^\ast(\Aa^\ast)$ induces a graded pairing, denoted by the same symbol, defined as
\begin{equation} \label{eq:pairing-A}
\langle \alpha^k,\beta^l\rangle = \begin{cases} \int [\alpha^k\cdot \beta^l] & \mbox{if $k+l=n$},\\ 0 & \mbox{else}. \end{cases}
\end{equation}
where $[\cdot]$ stands for the projection $\Aa^n = \Aa^n_d \to H^n(\Aa^\ast)$. It immediately follows that $\langle-,-\rangle$ satisfies
\begin{equation} \label{eq:properties-pairing}
\begin{array}{lll}
\langle \alpha^k, \beta^l\rangle & = & (-1)^{kl} \langle \beta^l, \alpha^k \rangle,\\
\langle \alpha^k \cdot \beta^l, \gamma^r\rangle & = & \langle \alpha^k, \beta^l \cdot \gamma^r\rangle,\\
\langle d\alpha^k, \beta^l \rangle & = & (-1)^{k+1} \langle \alpha^k, d\beta^l\rangle.
\end{array}
\end{equation}
 Poincar\'e-DGCAs  are examples of cyclic  $A_\infty$-algebras  that have been introduced  by Kontsevich \cite{Kontsevich1994}. The cyclic   DGCA $(\Om^\ast(M), d)$, $H^\ast_{dR}(M)$ have been considered  in  other papers, e.g. \cite{Hajek2018}.
\begin{definition} \label{def:splitting-map}
Let $(\Aa^\ast, \langle - , - \rangle)$ be a Poincar\'e-DGCA.
\begin{enumerate}
\item
We call $\Aa^\ast$ {\em non-degenerate }if the pairing $\langle - , - \rangle$ from (\ref{eq:pairing-A}) is non-degenerate.
\item
A {\em harmonic subspace} is a subspace $\Hh^\ast \subseteq \Aa^\ast_d$ complementary to $d\Aa^\ast$.
\item
A {\em splitting map of $\Aa^\ast$} is a splitting $\imath: H^\ast \to \Aa^\ast_d$ which splits the short exact sequence
\[\xymatrix{
0\ar[r] & d\Aa^{\ast-1} \ar[r] & \Aa_d^\ast \ar[r]_{pr} & H^\ast\ar@/_1pc/[l]_\imath \ar[r] & 0}.
\]
\item
A {\em Hodge type decomposition} is a direct sum decomposition of the form
\begin{equation} \label{eq:Hodge-type}
\Aa^\ast = d\Aa^{\ast-1} \oplus \Hh^\ast \oplus \Bb^\ast,
\end{equation}
where $\Hh^\ast$ is a harmonic subspace and such that
	\begin{equation}\label{def:Hodge-DGA2}
	\langle \Hh^\ast \oplus \Bb^\ast, \Bb^\ast\rangle = 0
	\end{equation}
\item
If $\Aa^\ast$ admits a Hodge type decomposition, then we call $\Aa^\ast$ 
a {\em Poincar\'e DGCA of Hodge type}. 
\end{enumerate}
\end{definition}

\begin{remark}\label{rem:nondegenerate}
 Evidently there is a one-to-one correspondence of harmonic subspaces $\Hh^\ast \subseteq \Aa^\ast_d$ and splitting maps of $\Aa^\ast$, as the image of a splitting map is a harmonic subspace. As $\imath: H^\ast \to \Hh^\ast$ preserves the pairing $\langle - , - \rangle$, it follows that the graded subspace $\Hh^\ast \subseteq \Aa^\ast$ is non-degenerate.
\end{remark}

\begin{remark}
By (\ref{eq:properties-pairing}), $\Aa^\ast_\perp \subseteq \Aa^\ast$ is a $d$-invariant ideal, whence there is an exact sequence of DGCA
\begin{equation} \label{eq:SES-Q}
0 \longrightarrow \Aa^\ast_\perp \longrightarrow \Aa^\ast \longrightarrow \Qq^\ast \longrightarrow 0,
\end{equation} 
where $\Qq^\ast$ is by definition the quotient $\Aa^\ast/\Aa^\ast_\perp$. 
\end{remark}

\begin{example} \label{ex:deRham}
The prototypical example of a Poincar\'e algebra of degree $n$ of Hodge type (which motivates our terminology) is the de Rham algebra $(\Omega^*(M),d)$ of a closed smooth oriented manifold $M$, with $\int$ given by the integration of $n$-forms. The Hodge decomposition w.r.t. some Riemannian metric $g$ on $M$
\begin{equation} \label{eq:Hodge-deRham}
\Om^\ast(M) = d\Om^{\ast-1}(M) \oplus \Hh^\ast(M) \oplus d^\ast \Om^{\ast+1}(M)
\end{equation}
yields a Hodge type decomposition in the sense of Definition \ref{def:splitting-map}, and the space $\Hh^\ast(M)$ of $\triangle_g$-harmonic forms is a harmonic subspace in the above sense. As we shall see in Remark \ref{rem:harm1} below, any other harmonic subspace $\Hh^\ast \subseteq \Om^\ast_d(M)$ will give rise to a Hodge type decomposition as well.
\end{example}

\begin{remark} \label{rem:harm1}
If $(\Aa^\ast, d)$ is a Poincar\'e-DGCA  admitting a Hodge-type decomposition,
 then every harmonic subspace $\Hh^\ast \subseteq \Aa^\ast_d$ may occur as summand of a Hodge-type decomposition (\ref{eq:Hodge-type}).
Namely, given a Hodge type decomposition (\ref{eq:Hodge-type}) and a harmonic subspace $\hat \Hh^\ast \subseteq \Aa^\ast_d$, from the identity $\Hh^\ast\oplus d\Aa^{\ast-1}= \hat{\Hh}^\ast\oplus d\Aa^{\ast-1}$ it  follows that there is a linear map $\alpha: \Hh^\ast \to d\Aa^{\ast-1}$ such that
\[
\hat \Hh^\ast = \{ v + \alpha(v) \mid v \in \Hh^\ast\}.
\]
Letting $\hat{\mathcal{B}}^\ast := \{ x - \alpha^\dagger(x) - \frac12 \alpha \alpha^\dagger(x) \mid x \in \mathcal{B}^\ast\}$, where $\alpha^\dagger: \mathcal{B}^\ast \to \Hh^\ast$ is the unique map satisfying $
\la \alpha^\dagger(x), v \ra = \la x, \alpha(v) \ra \quad \mbox{ for all $x \in \mathcal{B}^\ast$, $v \in \Hh^\ast$}$,
the decomposition $
\Aa^\ast = \hat \Hh^\ast \oplus d\Aa^{\ast-1} \oplus \hat{\mathcal{B}}^\ast$ 
is a Hodge type decomposition. 
\end{remark}

Given a Hodge-type decomposition (\ref{eq:Hodge-type}), it follows that the restriction $d: \Bb^\ast \to d\Aa^\ast$ is both injective and surjective and hence has an inverse $d^-: d\Aa^\ast \to \Bb^\ast$. We may extend $d^-$ to all of $\Aa^\ast$ by defining $d^-_{|\Hh^\ast \oplus \Bb^\ast} = 0$. Thus, $(d^-)^2 = 0$, and
\begin{equation} \label{eq:dd-d}
dd^-d = d, \qquad d^-dd^- = d^-.
\end{equation}
It follows that the projections in (\ref{eq:Hodge-type}) are given by
\begin{equation} \label{eq:projection}
pr_{\Hh^\ast} = 1 - [d, d^-], \qquad pr_{d\Aa^{\ast-1}} = dd^-, \qquad pr_{\Bb^\ast = d^-\Aa^{\ast+1}} = d^- d,
\end{equation}
where $[d, d^-] = dd^- + d^- d$ is the super-commutator. Therefore, (\ref{eq:Hodge-type}) may be written as
\begin{equation} \label{eq:Hodge-type-d-}
\Aa^\ast = d\Aa^{\ast-1} \oplus \Hh^\ast \oplus d^-\Aa^{\ast+1} = dd^-\Aa^\ast \oplus \Hh^\ast \oplus d^-d \Aa^\ast,
\end{equation}
and setting $\Aa^\ast_{d^-} := \ker d^- = \Hh^\ast \oplus d^-\Aa^\ast$, (\ref{def:Hodge-DGA2}) implies
	\begin{equation}\label{eq:Hodge-DGA3}
	\langle \Aa^\ast_d, d\Aa^{\ast-1} \rangle = \langle \Aa^\ast_{d^-}, d^-\Aa^{\ast+1} \rangle = 0.
	\end{equation}
We shall call elements in $\Aa^\ast_{d^-}$ and $d^-\Aa^{\ast+1}$ {\em $d^-$-closed } and {\em $d^-$-exact}, respectively.

Recall that a  DGA  over $\F$ is called \emph{connected} if $H^0(\Aa^\ast)\cong \mathbb{F}$ and {\em $r$-connected } if it is connected and $H^k(\Aa^\ast) = 0$ for $k = 1, \ldots, r$. A $1$-connected DGA  is also  called \emph{simply connected}.

\begin{remark}\label{rem:poincare}
\begin{enumerate}
\item
For $\Aa^\ast = (\Omega^*(M),d)$ for a closed oriented manifold $M$ with its Hodge decomposition w.r.t. a Riemannian metric $g$ (cf. Example \ref{ex:deRham}) the map $d^-$ in (\ref{eq:Hodge-type-d-}) is \emph{not} the codifferential $d^*$ in (\ref{eq:Hodge-deRham}). In particular, the supercommutator $\triangle_g=[d,d^*]$ is not a projector. Rather, it is not hard to see that $d^-$ and $d^\ast$ are related by the formula $d^\ast = \triangle_g d^{-}$.

\item
Note that $(\Omega^*(M),d)$ is connected iff $M$ is connected. However, the $r$-connectedness of $(\Omega^*(M),d)$, i.e., the vanishing of $b^k(M)$, $1 \leq k \leq r$, is a weaker notion than the $r$-connectedness of $M$, as the latter means that the homotopy groups $\pi_k(M)$ are trivial for $k \leq r$ (which is stronger than the vanishing of $b^k(M)$, $1 \leq k \leq r$).

\item
If the harmonic subspace $\Hh^\ast$ happens to be a subalgebra of $\Aa^\ast$, i.e., if the product of two harmonic elements is harmonic, then $(\Aa^\ast,d)$ is a formal DGCA.

\item
A Riemannian manifold $(M, g)$ for which the product of $\triangle_g$-harmonic forms is $\triangle_g$-harmonic is called {\em geometrically formal}.  It is known that compact symmetric spaces are geometrically formal \cite{Sullivan1975}, but in general, there are strong topological obstructions for $M$ to admit a geometrically formal Riemannian metric, see  \cite{Kotschick} for details and classification results of low dimensional geometrically formal Riemannian manifolds. The relation  between  the Hodge  decomposition
of  the  de Rham complex  on a  closed Riemannian  manifold $M$  and   the minimal model  of $M$ in the case $b^1 (M) =0$ has  been   proposed  by Sullivan \cite{Sullivan, Sullivan1975}.  The    Hodge decomposition  is also important  in the   proof  of the formality of  closed K\"ahler manifolds by Deligne-Griffiths-Morgan-Sullivan \cite{DGMS}.
	
\item
We will be only interested in connected  Poincar\'e DGCAs. 
In this case the product in cohomology $H^0(\Aa^\ast)\otimes H^k(\Aa^\ast)\to H^k(\Aa^\ast)$ is easily seen to be multiplication by scalars on the $\mathbb{F}$-vector space $H^k(\Aa^\ast)$.

\end{enumerate}
\end{remark}

\begin{lemma} \label{lem:N-triv-cohom}
If the Poincar\'e  DGCA $\Aa^\ast$ admits a Hodge type decomposition, then the differential ideal
$\Aa^\ast_\perp$ is invariant under $d^-$, and has the decomposition
\begin{equation} \label{eq:decomp-N}
\Aa^{\ast}_\perp  =  dd^-\Aa^{\ast}_\perp \oplus d^-d\Aa^{\ast}_\perp.
\end{equation}
In particular, $(\Aa^{\ast}_\perp, d)$ 
has trivial cohomology.
\end{lemma}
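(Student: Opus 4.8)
The plan is to prove the statement in three steps: first that $\Aa^\ast_\perp$ has no harmonic component, then that it is invariant under $d^-$, and finally to deduce the decomposition (\ref{eq:decomp-N}) and the vanishing of cohomology formally from these two facts together with the $d$-invariance of $\Aa^\ast_\perp$ recorded in (\ref{eq:SES-Q}).

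\textbf{Step 1: $pr_{\Hh^\ast}$ vanishes on $\Aa^\ast_\perp$.} Given $\alpha\in\Aa^\ast_\perp$, I would show that $pr_{\Hh^\ast}\alpha$ is orthogonal to all of $\Hh^\ast$ and then invoke the non-degeneracy of $\Hh^\ast$ from Remark \ref{rem:nondegenerate}. Writing $\alpha = dd^-\alpha + pr_{\Hh^\ast}\alpha + d^-d\alpha$ according to (\ref{eq:projection}) and pairing with an arbitrary $h\in\Hh^\ast$: the term $\langle\alpha,h\rangle$ vanishes because $\alpha\in\Aa^\ast_\perp$; the term $\langle dd^-\alpha,h\rangle$ vanishes by the first identity of (\ref{eq:Hodge-DGA3}), since $dd^-\alpha\in d\Aa^{\ast-1}$ and $h$ is a cocycle; and the term $\langle d^-d\alpha,h\rangle$ vanishes by the second identity of (\ref{eq:Hodge-DGA3}), since $d^-d\alpha$ is $d^-$-exact and $h\in\Hh^\ast\subseteq\Aa^\ast_{d^-}$ is $d^-$-closed. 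Hence $pr_{\Hh^\ast}\alpha=0$, so every $\alpha\in\Aa^\ast_\perp$ satisfies $\alpha = dd^-\alpha + d^-d\alpha$.

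\textbf{Step 2: $\Aa^\ast_\perp$ is $d^-$-invariant.} For $\alpha\in\Aa^\ast_\perp$ and arbitrary $\beta\in\Aa^\ast$ I would decompose $\beta$ via (\ref{eq:Hodge-type}) and check $\langle d^-\alpha,\beta\rangle=0$ component by component. Against $pr_{\Hh^\ast}\beta$ and against $pr_{\Bb^\ast}\beta$ the pairing vanishes by (\ref{eq:Hodge-DGA3}), using that $d^-\alpha$ is at the same time $d^-$-exact (it lies in $d^-\Aa^\ast$) and $d^-$-closed (since $(d^-)^2=0$). For the component $dd^-\beta = pr_{d\Aa}\beta$, move the $d$ across the pairing by the third identity of (\ref{eq:properties-pairing}), giving $\langle d^-\alpha, dd^-\beta\rangle = \pm\langle dd^-\alpha, d^-\beta\rangle$, and then replace $dd^-\alpha$ by $\alpha - d^-d\alpha$ using Step 1. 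The resulting term $\langle\alpha,d^-\beta\rangle$ vanishes since $\alpha\in\Aa^\ast_\perp$, while $\langle d^-d\alpha, d^-\beta\rangle$ vanishes by (\ref{eq:Hodge-DGA3}) because $d^-d\alpha$ is $d^-$-closed and $d^-\beta$ is $d^-$-exact. Hence $\langle d^-\alpha,\beta\rangle = 0$, i.e.\ $d^-\alpha\in\Aa^\ast_\perp$.

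\textbf{Step 3: conclusion.} Since $\Aa^\ast_\perp$ is a $d$-invariant ideal by (\ref{eq:SES-Q}) and $d^-$-invariant by Step 2, for $\alpha\in\Aa^\ast_\perp$ both summands in $\alpha = dd^-\alpha + d^-d\alpha$ (Step 1) again lie in $\Aa^\ast_\perp$; as $dd^-\Aa^\ast\subseteq d\Aa^{\ast-1}$ and $d^-d\Aa^\ast=\Bb^\ast$ are complementary summands in (\ref{eq:Hodge-type}), this is exactly the direct sum decomposition (\ref{eq:decomp-N}). Finally, Steps 1 and 2 say that the identity $[d,d^-] = 1 - pr_{\Hh^\ast}$ restricts to the identity on $\Aa^\ast_\perp$, so $d^-$ is a contracting homotopy of the subcomplex $(\Aa^\ast_\perp,d)$; equivalently, a cocycle $\alpha\in\Aa^\ast_\perp$ has $d^-d\alpha=0$, hence $\alpha = d(d^-\alpha)$ with $d^-\alpha\in\Aa^\ast_\perp$. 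Thus $H^\ast(\Aa^\ast_\perp,d)=0$. I expect the only delicate point to be the bookkeeping in Steps 1 and 2: one must apply the correct one of the two orthogonality relations of (\ref{eq:Hodge-DGA3}) to each Hodge component, and remember that $d^-\alpha$, $d^-d\alpha$ and $d^-\beta$ are automatically $d^-$-closed. The sign produced by the third identity of (\ref{eq:properties-pairing}) is immaterial, as all that is needed from each term is its vanishing.
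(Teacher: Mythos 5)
Your proof is correct and follows essentially the same route as the paper's: show the harmonic projection kills $\Aa^\ast_\perp$, then use the orthogonality relations and the adjunction of $d$ across the pairing to get $d^-$-stability, and conclude via the contracting homotopy $[d,d^-]=\mathrm{id}$ on $\Aa^\ast_\perp$. The only (harmless) difference is one of ordering — the paper first establishes stability under the projections $dd^-$ and $d^-d$ to get the decomposition and then deduces $d^-$-invariance, whereas you prove $d^-$-invariance directly and obtain the decomposition as a corollary.
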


\begin{proof}
We begin by showing that the restriction of the harmonic projector $pr_{{\Hh}^\ast}$ to $\Aa^{\ast}_\perp$is zero. To see this, let  $\alpha^k \in \Aa^{k}_\perp$ 
and write
\[
\alpha^k =pr_{{\Hh}^\ast}(\alpha^k)+dd^{-}\alpha^k+d^-d\alpha^k
\]
Then, for any $\beta^{n-k}\in \Hh^{n-k}$, we have
\[
0=\langle \alpha^k, \beta^{n-k}\rangle= \langle pr_{{\Hh}^\ast}(\alpha^k),\beta^{n-k}\rangle,
\]
as $\Hh^{n-k}\subseteq \Aa^{n-k}$ and by the orthogonality relations (\ref{def:Hodge-DGA2})-(\ref{eq:Hodge-DGA3}). Since the pairing $\langle-,-\rangle$ is nondegenerate on $\Hh^\ast$ (Remark \ref{rem:nondegenerate}), this implies $pr_{{\Hh}^\ast}(\alpha^k)=0$. Now we can show that $\Aa^{\ast}_\perp$ is stable with respect to both projections $dd^{-}$ and $d^{-}d$. As we have shown that
\[
\alpha^k =dd^{-}\alpha^k+d^-d\alpha^k
\]
for any $\alpha^k \in \Aa^{k}_\perp$, it will suffice to show that $d^-d\alpha^k\in \Aa^{k}_\perp$. By the orthogonality relations (\ref{def:Hodge-DGA2}) and by the decomposition (\ref{eq:Hodge-type}), we only need to show that $\langle d^-d\alpha^k, dd^-\beta^{n-k}\rangle=0$. We compute
\begin{align*}
\langle d^-d\alpha^k, dd^-\beta^{n-k}\rangle&=(-1)^{k+1}\langle dd^-d\alpha^k, d^-\beta^{n-k}\rangle\\
&=(-1)^{k+1}\langle d\alpha^k, d^-\beta^{n-k}\rangle\\
&=\langle \alpha^k, dd^-\beta^{n-k}\rangle=0
\end{align*}
where we used $dd^-d=d$ and the fact that $dd^-\beta^{n-k}\in \Aa^{n-k}$ 
by (\ref{eq:dd-d}-\ref{eq:projection}).
This shows that we have the direct sum decomposition
\[
\Aa^{\ast}_\perp  =  dd^-\Aa^{\ast}_\perp \oplus d^-d\Aa^{\ast}_\perp.
\]
Finally, we show that $d^{-}\Aa^{k}_\perp\subseteq \Aa^{k-1}_\perp$. To see this, let $\alpha^k\in \Aa^{k}_\perp$. As $d^{-}\Aa^{k}_\perp=d^{-}dd^{-}\Aa^{k}_\perp$, we want to show that $\langle d^{-}dd^{-}\alpha^k,\beta^{n-k+1}\rangle=0$ for any $\beta^{n-k+1}\in \Aa^{n-k+1}$. As $d^{-}dd^{-}\alpha^k\in d^-\Aa^{k-1}$, by the orthogonality relations (\ref{def:Hodge-DGA2}) and from the direct sum decomposition (\ref{eq:Hodge-type}) we see that it is not restrictive to assume $\beta^{n-k+1}=dd^-\gamma^{n-k+1}$, with $\gamma^{n-k+1}\in \Aa^{n-k+1}$. We have
\[
\langle d^{-}dd^{-}\alpha^k,dd^-\gamma^{n-k+1}\rangle=(-1)^k\langle dd^{-}dd^{-}\alpha^k,d^-\gamma^{n-k+1}\rangle=0
\]
where we used that $\Aa^{*}_\perp$ is stable with respect to $dd^{-}$. As $\mathrm{Id}_{\Aa^{\ast}_\perp}=[d,d^-]$
 with $d^{-}$ a degree -1 endomorphism of $\Aa^{\ast}_\perp$, we see that the identity of $\Aa^{\ast}_\perp$ is null-homotopic and so $(\Aa^{\ast}_\perp,d)$ is an acyclic complex.
\end{proof}

\begin{corollary} \label{cor:pi-QI}
If $\Aa^\ast$ is of Hodge type, then the projection $\pi: \Aa^\ast \to \Qq^\ast$, where $\Qq^\ast=\Aa^\ast/\Aa^\ast_\perp$, is a quasi-isomorphism. 
\end{corollary}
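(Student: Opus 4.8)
The plan is to derive Corollary~\ref{cor:pi-QI} as an immediate consequence of Lemma~\ref{lem:N-triv-cohom} together with the long exact sequence in cohomology associated to the short exact sequence~(\ref{eq:SES-Q}). First I would recall that $\Aa^\ast_\perp$ is a $d$-invariant differential ideal, so that~(\ref{eq:SES-Q}) is indeed a short exact sequence of cochain complexes (in fact of DGCAs), and $\pi\colon \Aa^\ast \to \Qq^\ast$ is a morphism of DGCAs. Applying the cohomology functor yields a long exact sequence
\[
\cdots \longrightarrow H^k(\Aa^\ast_\perp) \longrightarrow H^k(\Aa^\ast) \stackrel{\pi_\ast}{\longrightarrow} H^k(\Qq^\ast) \longrightarrow H^{k+1}(\Aa^\ast_\perp) \longrightarrow \cdots
\]
By Lemma~\ref{lem:N-triv-cohom}, $(\Aa^\ast_\perp, d)$ is acyclic, i.e. $H^k(\Aa^\ast_\perp) = 0$ for all $k$. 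Hence in the long exact sequence the two groups flanking $\pi_\ast$ vanish, so $\pi_\ast\colon H^k(\Aa^\ast) \to H^k(\Qq^\ast)$ is an isomorphism in every degree $k$, which is precisely the assertion that $\pi$ is a quasi-isomorphism.

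The only point that needs a word of care is the exactness of~(\ref{eq:SES-Q}) at the level of cochain complexes, but this is formal: $\Aa^\ast_\perp$ is a graded subspace closed under $d$ (by the third relation in~(\ref{eq:properties-pairing}), $\langle d\alpha,\beta\rangle = \pm\langle\alpha,d\beta\rangle$, so $\alpha\in\Aa^\ast_\perp$ forces $d\alpha\in\Aa^\ast_\perp$), and $\Qq^\ast$ is given the quotient differential, with respect to which $\pi$ is tautologically a chain map. There is no real obstacle here — the entire content has been packaged into Lemma~\ref{lem:N-triv-cohom}, and the corollary is just the standard homological-algebra observation that a quotient by an acyclic subcomplex is a quasi-isomorphism. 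If one prefers to avoid invoking the long exact sequence, one can argue directly: given a cocycle $q\in\Qq^k$, lift it to some $a\in\Aa^k$; then $da\in\Aa^{k+1}_\perp$ is a cocycle, hence by acyclicity $da = db$ for some $b\in\Aa^k_\perp$, and $a-b$ is a cocycle in $\Aa^\ast$ lifting $q$, proving surjectivity of $\pi_\ast$; injectivity is analogous, using that a cocycle of $\Aa^\ast$ mapping to a coboundary in $\Qq^\ast$ differs from a coboundary of $\Aa^\ast$ by a cocycle in $\Aa^\ast_\perp$, which is then a coboundary in $\Aa^\ast_\perp$.

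I expect no genuine difficulty; the proof is essentially one line modulo Lemma~\ref{lem:N-triv-cohom}. The care goes entirely into making sure the bookkeeping of gradings and signs in~(\ref{eq:properties-pairing}) is invoked correctly when asserting that $\Aa^\ast_\perp$ is a differential ideal, and this has already been observed in the Remark preceding the Lemma.
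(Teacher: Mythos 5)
Your proof is correct and follows exactly the route the paper intends: the corollary is stated as an immediate consequence of Lemma \ref{lem:N-triv-cohom}, obtained by applying the long exact cohomology sequence to the short exact sequence (\ref{eq:SES-Q}) and using the acyclicity of $(\Aa^\ast_\perp, d)$. The additional direct lifting argument you sketch is a fine alternative but adds nothing beyond the standard fact that a quotient by an acyclic subcomplex is a quasi-isomorphism.
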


\begin{remark}
As the induced pairing on $\Qq^\ast$ is nondegenerate by construction, Corollary \ref{cor:pi-QI} implies that every Hodge type Poincar\'e DGCA is equivalent to a nondegenerate one.
This is actually true even without the assumption that the Poincar\'e-DGCA $\Aa^\ast$ is of Hodge type, but the general proof is much more involved: see \cite[Theorem 1.1]{LS2007}.
\end{remark}

\begin{example}\label{ex:not-Hodge} We conclude this section with an example of a Poincar\'e DGCA admitting no Hodge type decomposition. Let $\Aa^\ast$ be the graded vector space on linear generators $\{x_0, x_2, x_3, x_4\}$ with $\deg(x_i)=i$. That is $\Aa^k$ is 1-dimensional for $k\in\{0,2,3,4\}$ and 0-dimensional for any other value of $k$. Introduce a DGCA structure on $\Aa^\ast$ by declaring the product and the differential on the generators as follows:
\begin{itemize}
\item the only non-zero products are $x_0 \cdot x_i = x_i\cdot x_0= x_i$ and $x_2 \cdot x_2 = x_4$;
\item the only non-zero differential is $dx_2 = x_3$.
\end{itemize}
The cohomology of $\Aa^\ast$ is the graded algebra on linear generators $[x_0]$ and $[x_4]$ with the products $[x_0]\cdot[x_0]=[x_0]$, $[x_0]\cdot[x_4]=[x_4]\cdot[x_0]=[x_4]$ and $[x_4]\cdot[x_4]=0$. Therefore, the only harmonic subspace is $\Hh^\ast = \spann(x_0, x_4)$. The linear functional $\int\colon H^4(\Aa^\ast)\to \mathbb{F}$ given by $\int[x_4]=1$ makes  $\Aa^\ast$ a  degree 4 Poincar\'e DGCA.  The $d$-invariant ideal $(\Aa_\perp^\ast, d)$ consists of $\Aa^3$ in degree 3 with the zero differential 
and so it is manifestly not acyclic. Therefore, $(\Aa^\ast, \langle-,-\rangle)$ cannot have a Hodge type decompositions by Lemma \ref{lem:N-triv-cohom}. 
\end{example}
\begin{remark}
Observe that in Example \ref{ex:not-Hodge} $\Hh^\ast$ is a subalgebra of $\Aa^\ast$, whence the inclusion $\Hh^\ast \hookrightarrow \Aa^\ast$ is a quasi-isomorphism, and evidently $\Hh^\ast$ is of Hodge type. That is, being of Hodge type is a property not preserved by quasi-isomorphisms.
\end{remark}

\begin{remark}\label{rem:CFL} In \cite[Lemma 11.1]{CFL2015} Cieliebak-Fukaya-Latschev proved  that  a finite
dimensional non-degenerate Poincare DGCA 
always admits a Hodge type decomposition.
\end{remark}

\section{Small quotient algebras  of  Poincar\'e   DGCAs of Hodge type}\label{sec:Poincare}

In this section we define the small algebra
of a Poincar\'e  DGCA of Hodge type $\Aa^\ast$  and the small  quotient algebra  of $\Aa^*$ (Definition \ref{def:small}).
 These algebras are DGCAs equivalent to $\Aa^\ast$. As a result we show that any simply connected Poincar\'e-DGCA of Hodge type is equivalent to a non-degenerate finite dimensional Poincar\'e-DGCA  (Corollary \ref{cor:equiv-findim}).
We investigate   the   equivalence class of  a $(r-1)$ connected ($r>1$)  Poincar\'e DGCA (Theorem \ref{thm:dim-finite}, Corollary  \ref{cor:formal-lowdim})  and  compare  our results  with some known  results (Remark \ref{rem:Miller}).

\begin{definition}
Let $\Aa^\ast$ be a Poincar\'e DGCA with a Hodge-type decomposition (\ref{eq:Hodge-type-d-}).  A $\Hh^*$-\emph{subalgebra} is a DG-subalgebra of $\Aa^\ast$ which is $d^{-}$-invariant and contains $\Hh^*$.
\end{definition}

For any $\Hh^\ast$-subalgebra $\Cc^\ast$ of $\Aa^\ast$, the restriction of the inner product of $\Aa^\ast$ to $\Cc^\ast$ makes $\Cc^\ast$ a degree $n$ Poincar\'e DGCA and the Hodge-type decomposition of $\Aa^\ast$ induces a Hodge-type decomposition
\begin{align}
\Cc^\ast & = \Hh^\ast \oplus d\Cc^\ast \oplus d^-\Cc^\ast
\label{eq:Hodge-Aa}
\end{align}
as $\Cc^*$ contains $\Hh^*$ and is invariant under the projections (\ref{eq:projection}).
In particular, the inclusion $(\Cc^\ast, d) \hookrightarrow (\Aa^\ast, d)$ is a quasi-isomorphism. Taking the quotient $\Qq^\ast(\Cc^\ast) := \Cc^\ast/\Cc^\ast_\perp$, Corollary \ref{cor:pi-QI} and Lemma \ref{lem:N-triv-cohom} imply that $\Qq^\ast(\Cc^\ast)$ is a non-degenerate Poincar\'e algebra of Hodge type, and the inclusion and projection maps
\[
\Aa^\ast \longleftarrow \Cc^\ast \longrightarrow \Qq^\ast(\Cc^\ast)
\]
are quasi-isomorphisms, so that $\Aa^\ast$ is equivalent to $\Qq^\ast(\Cc^\ast)$.

\begin{definition} \label{def:small} Let $\Aa^\ast = d\Aa^\ast \oplus \Hh \oplus d^-\Aa^\ast$ be a Poincar\'e DGCA with a Hodge type decomposition. The {\em small algebra of $\Aa^\ast$}, denoted by $\Aa^*_{\mathrm{small}}$ or $\Aa^*_{\mathrm{small}}(\Hh^*)$ is the (unique) smallest $\Hh^*$-subalgebra of $\Aa^*$. The {\em small quotient algebra of $\Aa^\ast$} is the quotient DGCA $\Qq^*_{\mathrm{small}}(\Hh^*)$ or $\Qq^*_{\mathrm{small}} := \Qq^\ast(\Aa^*_{\mathrm{small}}) = \Aa^*_{\mathrm{small}}/(\Aa^*_{\mathrm{small}})_\perp$.
\end{definition}

Clearly, the smallest algebra is well defined as $\Aa^\ast$ is a $\Hh^*$-subalgebras and the intersection of an arbitrary family of $\Hh^*$-subalgebras is a $\Hh^*$-subalgebra, so that $\Aa^*_{\mathrm{small}}$ is the intersection of all $\Hh^*$-subalgebras of $\Aa^*$. We shall, however, give a more accessible description of $\Aa^*_{\mathrm{small}}$.

For  a $\Hh^*$-subalgebra  $\Cc^*$ of $\Aa^*$  we denote by $\Cc^{i,j}\subseteq \Cc^{i+j}$ the image of the multiplication $\Cc^i\otimes \Cc^j\to \Cc^{i+j}$.

\begin{proposition}\label{prop:minimal}
Let $\Aa^\ast$ be a simply connected Poincar\'e DGCA endowed with a Hodge type decomposition. Then its small algebra $\Aa^\ast_{\mathrm{small}}$ is defined by the recursive formula
\begin{equation}\label{eq:recursion1}
\begin{cases}
&\Aa^0_{\mathrm{small}}=\F\cdot 1_{\Aa},\\
& \Aa^1_{\mathrm{small}} = 0,\\
&\Aa^k_{\mathrm{small}}  =  \Hh^k\oplus\; dd^- \spann \{\Aa^{l_1,l_2}_{\mathrm{small}}, l_1, l_2 \geq 2, l_1 + l_2 = k\}\\
 & \qquad\qquad \oplus\; d^- \spann \{ \Aa^{l_1,l_2}_{\mathrm{small}}, l_1, l_2 \geq 2, l_1 + l_2 = k+1\};\qquad  k \geq 2
\end{cases}
\end{equation}
In particular $\Aa^\ast_{\mathrm{small}}$ is finite dimensional.
\end{proposition}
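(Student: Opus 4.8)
The plan is to define a candidate subspace $\Bb^\ast \subseteq \Aa^\ast$ by the recursion (\ref{eq:recursion1}) — this makes sense a priori since each step only refers to strictly lower degrees (the right-hand side in degree $k$ involves products $\Aa^{l_1,l_2}_{\mathrm{small}}$ with $l_1,l_2\ge 2$, hence $l_1,l_2 \le k-1$ since $l_1+l_2\in\{k,k+1\}$) — and then show two things: (i) $\Bb^\ast$ is an $\Hh^\ast$-subalgebra, and (ii) every $\Hh^\ast$-subalgebra contains $\Bb^\ast$. Together these identify $\Bb^\ast$ with $\Aa^\ast_{\mathrm{small}}$, the intersection of all $\Hh^\ast$-subalgebras. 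Finite-dimensionality then follows by induction on $k$: each $\Hh^k$ is finite dimensional (cohomology of a Poincaré DGCA) and the span terms are images of finitely many tensor products of finite-dimensional spaces, plus the whole thing vanishes for $k > n$ since $\Aa^\ast$ is concentrated in degrees $0,\dots,n$; so the recursion terminates.

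First I would check that $\Bb^\ast$ is $d^-$-invariant and closed under the harmonic projector: this is immediate from the shape of the recursion, since in degree $k$ the summand $d^-\spann\{\cdots\}$ already exhausts $d^-$ applied to the degree-$(k+1)$ products, $dd^-$ applied to anything lands in the $dd^-\spann$ summand or is zero, and $\Hh^k$ is the image of $pr_{\Hh^\ast}$; so $\Bb^\ast$ is invariant under all three projections in (\ref{eq:projection}). Next, that $\Bb^\ast$ is $d$-invariant: $d$ kills $\Hh^k$ and the $dd^-$-summand (as $d\,dd^- = d$ on exact elements but these are already exact... more precisely $d(dd^-x)=0$), and $d(d^-x) = x - d^-dx - pr_{\Hh}(x)$ by (\ref{eq:projection}), so $d$ maps the $d^-\spann$-summand in degree $k$ into $\Hh^{k+1}\oplus dd^-(\text{products})\subseteq \Bb^{k+1}$ provided the relevant products themselves lie in $\Bb^\ast$ — which holds by construction. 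The key point requiring care is multiplicative closure: I must show $\Bb^i\cdot\Bb^j\subseteq\Bb^{i+j}$. Given $x\in\Bb^i$, $y\in\Bb^j$ with $i,j\ge 2$, the product $xy$ lies in $\Aa^{i,j}_{\mathrm{small}}$ (once we know $\Bb=\Aa_{\mathrm{small}}$; to avoid circularity, phrase the recursion's span terms in terms of $\Bb$ itself), and decomposing $xy = pr_{\Hh}(xy) + dd^-(xy) + d^-d(xy)$, the first two terms are visibly in the degree-$(i+j)$ formula, and $d^-d(xy) = d^-\big((dx)y \pm x(dy)\big)$ lies in $d^-\spann$ of products of elements of $\Bb$ in total degree $i+j+1$, which is again a summand of $\Bb^{i+j}$ — here one uses that $dx, dy\in\Bb^\ast$ (just proved) and the Leibniz rule. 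The degenerate cases where $i$ or $j$ equals $0$ or $1$ are trivial since $\Bb^0=\F\cdot 1$ and $\Bb^1=0$ (simple connectedness: $\Hh^1 = H^1 = 0$, and the span terms are empty in degree $1$).

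Finally, for the reverse inclusion: any $\Hh^\ast$-subalgebra $\Cc^\ast$ contains $\Hh^\ast$ and is closed under products and under $d^-$ (hence under $dd^-$ and $d^-d$ too, via (\ref{eq:projection})), so an induction on $k$ shows $\Bb^k\subseteq\Cc^k$ — the inductive hypothesis gives $\Bb^{l}\subseteq\Cc^l$ for $l<k$, whence all products $\Aa^{l_1,l_2}_{\mathrm{small}}$ appearing in the degree-$k$ formula lie in $\Cc^\ast$, and applying $d^-$, $dd^-$ and adding $\Hh^k$ keeps us in $\Cc^\ast$. The main obstacle is the bookkeeping in the multiplicative-closure step: one must carefully track that $d^-d(xy)$ rewrites, via Leibniz and $d\Bb\subseteq\Bb$, as $d^-$ of a sum of products of $\Bb$-elements of total degree exactly one more, and confirm that all the degree indices $l_1,l_2\ge 2$ constraints are met — in particular that no product with a degree-$1$ or degree-$0$ factor other than the unit sneaks in, which is where simple connectedness is essential.
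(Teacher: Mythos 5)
Your proposal is correct and follows essentially the same route as the paper: show the recursively defined space is an $\Hh^\ast$-subalgebra (multiplicative closure via the decomposition $xy = pr_{\Hh^\ast}(xy) + dd^-(xy) + d^-d(xy)$ together with the Leibniz rule and $d$-closure), then show by induction on degree that every $\Hh^\ast$-subalgebra contains it. The only difference is that you spell out the $d$- and $d^-$-invariance checks that the paper calls ``immediate,'' and note that $d(d^-x)=dd^-x$ lands directly in the $dd^-\mathrm{span}$ summand, so the detour through $dd^-x = x - pr_{\Hh^\ast}(x) - d^-dx$ is not needed.
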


\begin{proof}
Let us denote by $\Cc^\ast$ the algebra defined by the recursion (\ref{eq:recursion1}). Thus, our aim is to show that $\Cc^\ast = \Aa^\ast_{\mathrm{small}}$.

It is immediate from the definition of $\Cc^\ast$ and from the identity $d^-dd^-=d^-$ that $\Hh^\ast \subseteq \Cc^\ast$ and $d\Cc^k\subseteq \Cc^{k+1}$ and $d^-\Cc^k\subseteq \Cc^{k-1}$.
To see that $\Cc^\ast$ is closed under multiplication, we need to show that, if $\alpha^k\in \Cc^k$ and $\beta^l\in \Cc^l$, then $\alpha^k\cdot \beta^l\in \Cc^{k+l}$. If $k\leq 1$ or $l\leq 1$ there is nothing to be proven. So let us assume $k,l\geq 2$. By equations (\ref{eq:Hodge-Aa}),(\ref{eq:projection}) we have
\[
\alpha^k \cdot \beta^l = pr_{\Hh^\ast}(\alpha^k \cdot \beta^l) + dd^-(\alpha^k \cdot \beta^l) + d^- (d\alpha^k \cdot \beta^l) + (-1)^k d^-(\alpha^k \cdot d\beta^l),
\]
and the right-hand side manifestly belongs to $\Cc^{k+l}$, as $\Cc^\ast$ is $d$-closed so that $d\alpha^k, d\beta^l \in \Cc^\ast$.

Conversely, it follows by induction of $k$ that $\Cc^k \subseteq \Aa^k_{\mathrm{small}}$. Indeed, for $k = 0,1$ this is obvious, and if $k \geq 2$, then it is evident that any $\Hh^\ast$-subalgebra containing $\Cc^l$ for $l < k$ also must contain $\Cc^k$ by (\ref{eq:recursion1}).
\end{proof}

\begin{remark}
It is worth noticing that the proof of Proposition \ref{prop:minimal} does not use the existence of a pairing on $\Aa^\ast$ (and so in particular the existence of a Hodge-type decomposition) nor the graded commutativity of the multiplication. That is, Proposition \ref{prop:minimal} actually shows that a simply connected
DGA $\Aa^\ast$ concentrated in degrees $[0,n]$ and with finite dimensional cohomology is always equivalent to a finite-dimensional DGA $\Aa^\ast_{\mathrm{small}}$. 
\end{remark}

As $\Qq_{\text{small}}$ is equivalent to $\Aa^\ast$, we have thus shown the following.

\begin{corollary} \label{cor:equiv-findim}
Let $\Aa^\ast$ be a simply connected Poincar\'e-DGCA of degree $n$ and of Hodge type. Then $\Aa^\ast$ is equivalent to a finite dimensional non-degenerate Poincar\'e-DGCA of degree $n$ and of Hodge type.
\end{corollary}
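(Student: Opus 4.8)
The plan is to assemble Corollary~\ref{cor:equiv-findim} directly from the machinery already developed in this section, so very little new work is required. The point is that Proposition~\ref{prop:minimal} gives, for a simply connected Poincar\'e DGCA $\Aa^\ast$ of Hodge type, an explicit finite-dimensional $\Hh^\ast$-subalgebra $\Aa^\ast_{\mathrm{small}} \subseteq \Aa^\ast$, and the discussion preceding Definition~\ref{def:small} shows that the inclusion $\Aa^\ast_{\mathrm{small}} \hookrightarrow \Aa^\ast$ is a quasi-isomorphism (since the Hodge-type decomposition of $\Aa^\ast$ restricts to one on any $\Hh^\ast$-subalgebra via formula~(\ref{eq:Hodge-Aa})). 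So first I would record that $\Aa^\ast_{\mathrm{small}}$ is a finite-dimensional Poincar\'e DGCA of degree $n$ and of Hodge type, equivalent to $\Aa^\ast$.

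Second, I would pass from $\Aa^\ast_{\mathrm{small}}$ to its small quotient algebra $\Qq^\ast_{\mathrm{small}} = \Aa^\ast_{\mathrm{small}}/(\Aa^\ast_{\mathrm{small}})_\perp$. By Lemma~\ref{lem:N-triv-cohom} applied to $\Aa^\ast_{\mathrm{small}}$, the null-ideal $(\Aa^\ast_{\mathrm{small}})_\perp$ is acyclic; hence by Corollary~\ref{cor:pi-QI} the projection $\Aa^\ast_{\mathrm{small}} \to \Qq^\ast_{\mathrm{small}}$ is a quasi-isomorphism. Moreover $\Qq^\ast_{\mathrm{small}}$ is non-degenerate by construction of the quotient pairing~(\ref{eq:pairing-Q}), it remains finite dimensional as a quotient of a finite-dimensional space, its cohomology is still that of $\Aa^\ast$ and hence it is a Poincar\'e algebra of degree $n$, and it inherits a Hodge-type decomposition (the induced decomposition~(\ref{eq:decomp-N}) shows $(\Aa^\ast_{\mathrm{small}})_\perp$ is compatible with the splitting, so the quotient decomposition still satisfies the orthogonality~(\ref{def:Hodge-DGA2}) — and here non-degeneracy is automatic). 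Composing, $\Aa^\ast \hookleftarrow \Aa^\ast_{\mathrm{small}} \twoheadrightarrow \Qq^\ast_{\mathrm{small}}$ exhibits an equivalence between $\Aa^\ast$ and a finite-dimensional non-degenerate Poincar\'e DGCA of degree $n$ and of Hodge type, which is exactly the claim.

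There is essentially no obstacle here, as the statement is by design a packaging of Proposition~\ref{prop:minimal}, Lemma~\ref{lem:N-triv-cohom} and Corollary~\ref{cor:pi-QI}; the only point requiring a line of care is the verification that $\Qq^\ast_{\mathrm{small}}$ is again of Hodge type, i.e.\ that the harmonic subspace $\Hh^\ast$ (which lies in $\Aa^\ast_{\mathrm{small}}$ and maps isomorphically onto its image in the quotient, since $\Hh^\ast \cap (\Aa^\ast_{\mathrm{small}})_\perp = 0$ by Remark~\ref{rem:nondegenerate}) together with the images of $d\Aa^\ast_{\mathrm{small}}$ and $d^-\Aa^\ast_{\mathrm{small}}$ gives a genuine direct sum decomposition satisfying~(\ref{def:Hodge-DGA2}). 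This follows from Lemma~\ref{lem:N-triv-cohom}: since $(\Aa^\ast_{\mathrm{small}})_\perp = dd^-(\Aa^\ast_{\mathrm{small}})_\perp \oplus d^-d(\Aa^\ast_{\mathrm{small}})_\perp$ is a sub-bicomplex compatible with the decomposition $\Aa^\ast_{\mathrm{small}} = d\Aa^\ast_{\mathrm{small}} \oplus \Hh^\ast \oplus d^-\Aa^\ast_{\mathrm{small}}$, the quotient inherits the analogous decomposition and the orthogonality passes to the quotient via~(\ref{eq:pairing-Q}).
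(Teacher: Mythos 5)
Your proposal is correct and follows essentially the same route as the paper: the paper also obtains the corollary by combining Proposition~\ref{prop:minimal} (finite dimensionality of $\Aa^\ast_{\mathrm{small}}$) with the observation that the inclusion $\Aa^\ast_{\mathrm{small}} \hookrightarrow \Aa^\ast$ and the projection $\Aa^\ast_{\mathrm{small}} \to \Qq^\ast_{\mathrm{small}}$ are quasi-isomorphisms, the latter via Lemma~\ref{lem:N-triv-cohom} and Corollary~\ref{cor:pi-QI}, with non-degeneracy of $\Qq^\ast_{\mathrm{small}}$ coming from the quotient pairing~(\ref{eq:pairing-Q}). Your extra care in checking that the Hodge-type decomposition descends to the quotient is a reasonable elaboration of what the paper asserts implicitly.
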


\begin{remark}
The result of Corollary \ref{cor:equiv-findim} holds more generally without the assumption that the Poincar\'e-DGCA $\Aa^\ast$ is of Hodge type. Namely, one can apply Theorem 1.1 from \cite{LS2007} to the small algebra $\Aa^\ast_{\mathrm{small}}$ to get a finite dimensional non-degenerate Poincar\'e-DGCA $\widetilde{\Aa^\ast_{\mathrm{small}}}$ of degree $n$. By Remark  \ref{rem:CFL}, this algebra will admit a Hodge type decomposition.
\end{remark}

Given a  connected Poincar\'e DGCA of degree $n$ $\Aa^\ast$, with cohomology algebra $H^\ast := H^\ast(\Aa^\ast)$, let $H^\ast_+ := \bigoplus_{k=1}^n H^k$.

\begin{definition} \label{def-generating}
A {\em generating subspace} is a graded subspace $H^\ast_{gen} \subseteq H^\ast_+$ complementary to $H^\ast_+ \cdot H^\ast_+$, i.e., there is a direct sum decomposition
\begin{equation} \label{eq:H*gen}
H^\ast_+ = H^\ast_{gen} \oplus (H^\ast_+ \cdot H^\ast_+).
\end{equation}
\end{definition}

As $H^\ast$ is finite dimensional, $H^\ast_{gen}$ always exists and $H^1 \subseteq H^\ast_{gen}$. Moreover, if $\Aa^\ast$ is $(r-1)$-connected, so that $H^k = 0$ for $k = 1, \ldots, r-1$, then $H^k \subseteq H^\ast_{gen}$ for $0 < k \leq 2r-1$. Induction on the degree easily implies that each element in $H^\ast_+ \cdot H^\ast_+$ is spanned by products of elements in $H^\ast_{gen}$. That is, $H^\ast_{gen}$ generates $H^\ast_+$ as an algebra.

Let $S^\ast(H^\ast_{gen})$ be the graded symmetric tensor algebra of elements of $H^\ast_{gen}$. Multiplication induces a morphism of graded algebras $S^\ast(H^\ast_{gen}) \xrightarrow{\cdot} H^\ast$, which is surjective, as $H^\ast_{gen}$ generates $H^\ast$ and $H^0 = \F$ is spanned by the image of the identity in $S^\ast(H^\ast_{gen})$. Therefore, we obtain the exact sequence
\begin{equation} \label{eq:short-sequ1}
0 \longrightarrow K^\ast \longrightarrow S^\ast(H^\ast_{gen}) \xrightarrow{\quad\cdot\quad} H^\ast \longrightarrow 0
\end{equation}
where the kernel of the multiplication $K^\ast \subseteq S^\ast(H^\ast_{gen})$ is an ideal. In fact, decomposing $S^\ast(H^\ast_{gen}) = \F \oplus H^\ast_{gen} \oplus S^\ast_{\geq2}(H^\ast_{gen})$, 
where $S^\ast_{\geq k}(H^\ast) \subseteq S^\ast(H^\ast)$ denotes the ideal of graded polynomials of degree at least $k$, multiplication maps the first two summands isomorphically to $H^0$ and $H^\ast_{gen}$, respectively, and $S^\ast_{\geq2}(H^\ast_{gen})$ to $H_+ \cdot H_+$. Therefore, we may restrict (\ref{eq:short-sequ1}) to a subsequence
\begin{equation} \label{eq:short-sequ2}
0 \longrightarrow K^\ast \longrightarrow S_{\geq2}^\ast(H^\ast_{gen}) \xrightarrow{\quad\cdot\quad} H_+^\ast \cdot H_+^\ast \longrightarrow 0.
\end{equation}

A linear map $\imath_0: H^\ast_{gen} \to \Aa^\ast_d$ will be called a {\em partial splitting map} if $pr \circ \imath_0$ is the inclusion of  $H^\ast_{gen}$ into $H^\ast(\Aa^\ast)$ with  $pr: \Aa^\ast_d \to H^\ast$ from Definition \ref{def:splitting-map}.
As $H^\ast_{gen}$ is finite dimensional, a partial splitting map always exists, and we denote its image by $\Hh^\ast_{gen} \subseteq \Aa^\ast_d$.
Then $\imath_0$ induces an algebra morphism $S^\ast(H^\ast_{gen}) \to \Aa^\ast_d$ which by abuse of notation we also denote by $\imath_0$, and 
 $pr \circ \imath_0: S^\ast(H^\ast_{gen}) \to H^\ast$ coincides with the multiplication map `` $\cdot$ " from above. Observe that $\imath_0(S^\ast(H^\ast_{gen}))$ is the algebra generated by $\Hh^\ast_{gen} \subseteq \Aa^\ast_d$.

\begin{definition}\label{def:adapted}
Let $H^\ast_{gen} \subseteq H^\ast_+$ be a generating subspace and $\imath_0: H^\ast_{gen} \to \Aa^\ast_d$ be a partial splitting map.
We call a splitting map $\imath: H^\ast(\Aa^\ast)\to \Aa^\ast_d$ an {\em extension of $\imath_0$} if $\imath_{|H^\ast_{gen}} = \imath_0$ and $\imath(H^\ast(\Aa^\ast))  \subseteq \imath_0(S^\ast(H^\ast_{gen}))$. In this case, we say that the harmonic subspace $\Hh^\ast=\imath(H^\ast(\Aa^\ast))$ is {\em $\imath_0$-adapted}; we call $\Hh^\ast_{gen}$ a {\em generating harmonic space}, and $\Hh^\ast = \imath(H^\ast)$ an {\em extension of $\Hh^\ast_{gen}$}.
\end{definition}

Thus, we have the following commuting diagrams of short exact sequences 
\begin{equation} \label{eq:exact-i0}
\xymatrix{
0\ar[r] & K^\ast\ar[d]_{\imath_0}\ar[r] & S^\ast(H^\ast_{gen})\ar[d]_{\imath_0}\ar[r]^\cdot & H^\ast \ar@{=}[d]\ar[r] & 0\\
0\ar[r] & \Kk^\ast\ar[r] & \imath_0(S^\ast(H^\ast_{gen}))\ar[r]_{pr} & H^\ast\ar@/_1pc/[l]_\imath \ar[r] & 0
}
\end{equation}
\begin{equation} \label{eq:exact-i1}
\xymatrix{
0\ar[r] & K^\ast\ar[d]_{\imath_0}\ar[r] & S_{\geq 2}^\ast(H^\ast_{gen})\ar[d]_{\imath_0}\ar[r]^\cdot & H_+^\ast \cdot H^\ast_+ \ar@{=}[d]\ar[r] & 0\\
0\ar[r] & \Kk^\ast\ar[r] & \imath_0(S_{\geq 2}^\ast(H^\ast_{gen}))\ar[r]_{pr} & H_+^\ast \cdot H^\ast_+\ar@/_1pc/[l]_{\imath_1} \ar[r] & 0
}
\end{equation}
where the top rows are the sequence (\ref{eq:short-sequ1}) and (\ref{eq:short-sequ2}), respectively, and the vertical maps are surjective, and where $\Kk^\ast := \imath_0(K^\ast) \subseteq \Aa^\ast$.

\begin{lemma} \label{lem:i0-adapted}
There is a one-to-one correspondence between $\imath_0$-adapted splitting maps $\imath: H^\ast \to \Aa^\ast_d$ and splitting maps $\imath_1: H^\ast_+ \cdot H^\ast_+ \to \imath_0(S_{\geq 2}^\ast(H^\ast_{gen}))$ of the exact sequence in the bottom row of (\ref{eq:exact-i1}). In particular, such a splitting map always exists.
\end{lemma}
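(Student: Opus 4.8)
The plan is to split an $\imath_0$-adapted splitting map $\imath$ into its three components along the internal direct sum decomposition $H^\ast = H^0 \oplus H^\ast_{gen} \oplus (H^\ast_+\cdot H^\ast_+)$, which is available since $H^\ast = H^0\oplus H^\ast_+$ and $H^\ast_+ = H^\ast_{gen}\oplus(H^\ast_+\cdot H^\ast_+)$ by Definition \ref{def-generating}, and then to observe that the first two components are forced while the third is precisely a splitting $\imath_1$ of the bottom row of (\ref{eq:exact-i1}). For the component on $H^0$: since $\Aa^\ast$ is connected, $\Aa^0 = \Aa^0_d = H^0 = \F\cdot 1_\Aa$ and $d\Aa^{-1}=0$, so $pr:\Aa^0_d\to H^0$ is an isomorphism whose unique section sends the generator of $H^0$ to $1_\Aa = \imath_0(1)\in\imath_0(S^\ast(H^\ast_{gen}))$; in particular every splitting map of $\Aa^\ast$, restricted to $H^0$, equals this fixed map and automatically takes values in $\imath_0(S^\ast(H^\ast_{gen}))$. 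For the component on $H^\ast_{gen}$: the defining property of $\imath_0$-adaptedness (Definition \ref{def:adapted}) fixes $\imath|_{H^\ast_{gen}} = \imath_0$, which is consistent with $pr\circ\imath = \mathrm{id}_{H^\ast}$ because $\imath_0$ is a partial splitting map, i.e. $pr\circ\imath_0$ is the inclusion $H^\ast_{gen}\hookrightarrow H^\ast$.

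Next I would analyze the remaining component $\imath_1 := \imath|_{H^\ast_+\cdot H^\ast_+}$ of an $\imath_0$-adapted splitting map. Write $m:S^\ast(H^\ast_{gen})\to H^\ast$ for the multiplication map, so that $pr\circ\imath_0 = m$. By adaptedness $\imath_1$ takes values in $\imath_0(S^\ast(H^\ast_{gen}))$, and I claim it actually takes values in the smaller subspace $\imath_0(S_{\geq 2}^\ast(H^\ast_{gen}))$. Indeed, for $\eta\in H^\ast_+\cdot H^\ast_+$ write $\imath_1(\eta) = \imath_0(s)$ with $s = s_0 + s_1 + s_{\geq 2}$ in $\F\oplus H^\ast_{gen}\oplus S_{\geq 2}^\ast(H^\ast_{gen})$; applying $pr$ gives $\eta = m(s_0) + m(s_1) + m(s_{\geq 2})$, with the three terms lying in $H^0$, $H^\ast_{gen}$ and $H^\ast_+\cdot H^\ast_+$ respectively. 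Since $\eta$ has vanishing $H^0$- and $H^\ast_{gen}$-components and $m$ restricts to isomorphisms on $\F$ and on $H^\ast_{gen}$ (the observation used to pass from (\ref{eq:short-sequ1}) to (\ref{eq:short-sequ2})), we get $s_0 = s_1 = 0$, hence $\imath_1(\eta) = \imath_0(s_{\geq 2})\in\imath_0(S_{\geq 2}^\ast(H^\ast_{gen}))$. Thus $\imath_1$ is a splitting of the bottom row of (\ref{eq:exact-i1}). Conversely, given any splitting $\imath_1$ of that row, define $\imath:H^\ast\to\Aa^\ast_d$ to be the forced section on $H^0$, to be $\imath_0$ on $H^\ast_{gen}$, and to be $\imath_1$ on $H^\ast_+\cdot H^\ast_+$; then the image of $\imath$ lies in $\imath_0(S^\ast(H^\ast_{gen}))$ and $pr\circ\imath = \mathrm{id}_{H^\ast}$ holds componentwise, so $\imath$ is $\imath_0$-adapted. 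The assignments $\imath\mapsto\imath_1$ and $\imath_1\mapsto\imath$ are mutually inverse, which proves the bijection; and such an $\imath_1$ always exists because the bottom row of (\ref{eq:exact-i1}) is a short exact sequence of $\F$-vector spaces and therefore splits.

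The only step going beyond bookkeeping is showing that the $H^\ast_+\cdot H^\ast_+$-component of an $\imath_0$-adapted splitting lands in $\imath_0(S_{\geq 2}^\ast(H^\ast_{gen}))$ rather than merely in $\imath_0(S^\ast(H^\ast_{gen}))$; this is exactly where the injectivity of $m$ on the degree-$0$ and generating parts of $S^\ast(H^\ast_{gen})$ enters. Everything else amounts to tracking the direct-sum decomposition of $H^\ast$ together with the identity $pr\circ\imath_0 = m$.
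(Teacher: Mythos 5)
Your proof is correct and follows essentially the same route as the paper's: decompose $H^\ast = H^0 \oplus H^\ast_{gen} \oplus (H^\ast_+\cdot H^\ast_+)$, note that the first two components of an $\imath_0$-adapted splitting are forced, and identify the third component with a splitting of the bottom row of (\ref{eq:exact-i1}); you merely supply two details the paper leaves implicit, namely the treatment of $H^0$ and the verification that the restriction to $H^\ast_+\cdot H^\ast_+$ actually lands in $\imath_0(S_{\geq 2}^\ast(H^\ast_{gen}))$. One cosmetic slip: the equality $\Aa^0=\Aa^0_d$ is false in general (e.g.\ for the de Rham algebra), but your argument only uses $\Aa^0_d=\F\cdot 1_{\Aa}$, which does follow from connectedness since $d\Aa^{-1}=0$.
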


\begin{proof}
Given a splitting map $\imath_1: H^\ast_+ \cdot H^\ast_+ \to \imath_0(S_{\geq 2}^\ast(H^\ast_{gen}))$ of (\ref{eq:short-sequ2}), it follows that $\imath := \imath_0 \oplus \imath_1: H^\ast = H^\ast_{gen} \oplus H_+^\ast \cdot H^\ast_+ \to \Aa^\ast_d$ (using the decomposition (\ref{eq:H*gen})) is a splitting of (\ref{eq:short-sequ1}) whose image is contained in $\imath_0(S^\ast(H^\ast_{gen}))$. Conversely, given a splitting $\imath: H^\ast \to \imath_0(S^\ast(H^\ast_{gen}))$ of (\ref{eq:short-sequ1}), its restriction to $H_+^\ast \cdot H^\ast_+$ maps to $\imath_0(S_{\geq 2}^\ast(H^\ast_{gen}))$ and hence yields a splitting map of (\ref{eq:short-sequ2}). The existence of such a splitting follows as $H_+^\ast \cdot H^\ast_+$  is finite dimensional.
\end{proof}

By definition, if $\Aa^\ast$ is $(r-1)$-connected, then $H^\ast_+ \subseteq H^\ast_{\geq r}$ and hence, $H^\ast_+ \cdot H^\ast_+ \subseteq H^\ast_{\geq 2r}$. Thus, if $\Hh^\ast \subseteq \Aa^\ast_d$ is an $\imath_0$-adapted harmonic subspace  
then by (\ref{eq:H*gen}) it immediately follows that
\begin{equation} \label{eq:Hk-gen}
\begin{array}{llll}
H^k = H^k_{gen} & \mbox{for $1 \leq k \leq 2r - 1$}, & \ & H^{2r} = H^{2r}_{gen} \oplus H^r \cdot H^r,\\
\Hh^k = \Hh^k_{gen} & \mbox{for $1 \leq k \leq 2r - 1$}, & & \Hh^{2r} \subseteq \Hh^{2r}_{gen} \oplus \mu(\Hh^r \odot \Hh^r),
\end{array}
\end{equation}
where $\mu$ is the multiplication map in $\Aa^\ast$. In particular, as $\Aa^\ast$ is $(r-1)$-connected, it follows that both $H^k_{gen}$ and $\Hh^k_{gen}$ vanish for $1 \leq k \leq r-1$. Moreover, most of the spaces of degree $\leq 2r$ in (\ref{eq:exact-i1}) vanish because of (\ref{eq:Hk-gen}), whence we immediately conclude
\begin{equation} \label{eq:K-gen}
\begin{array}{llll}
K^k & = \Kk^k = 0 \qquad \mbox{for $1 \leq k \leq 2r - 1$},\\
K^{2r} & = \ker (\cdot: H^r \odot H^r \to H^{2r})\\
\Kk^{2r} & = \mu(\Hh^r \odot \Hh^r) \cap d\Aa^{2r-1}.
\end{array}
\end{equation}

Note that any subalgebra of $\Aa^\ast$ which contains $\Hh^\ast$ and hence $\Hh^\ast_{gen} = \imath_0(H^\ast_{gen})$ must also contain $\imath_0(S^\ast(H^\ast_{gen})) = \Hh^\ast \oplus \Kk^\ast$, using the splitting of the bottom exact sequence in (\ref{eq:exact-i1}). This holds, in particular, for the small algebra $\Aa^\ast_{\mathrm{small}}$. Comparing this with the recursive description of $\Aa^\ast_{\mathrm{small}}$ in (\ref{eq:recursion1}), this together with (\ref{eq:K-gen}) yields

\begin{equation}\label{eq:recursion2}
\begin{cases}
\Aa^0_{\mathrm{small}}=\Hh^0=\F\cdot 1_{\Aa},\\
\Aa^k_{\mathrm{small}} = \Hh^k= 0, & 1\leq k\leq r-1\\
\Aa^k_{\mathrm{small}}  =  \Hh^k, & r\leq k\leq 2r-2\\
\Aa^{2r-1}_{\mathrm{small}}  =  \Hh^{2r-1} \oplus d^-\Kk^{2r};\\
\Aa^{2r}_{\mathrm{small}}  =  \Hh^{2r}\oplus\Kk^{2r}\oplus\; d^- \Aa^{r,r+1}_{\mathrm{small}}.\\
 \end{cases}
\end{equation}

Thus, when passing to the quotient algebra $ \Qq^k_{\mathrm{small}}$, we can summarize our discussion so far as follows.

\begin{theorem} \label{thm:dim-finite}
Let $\Aa^\ast$ be a $(r-1)$ connected ($r>1$)  Poincar\'e-DGCA of degree $n$  of Hodge type. Then $\Aa^\ast$ is equivalent to a finite dimensional non-degenerate Poincar\'e-DGCA $\Qq^\ast_{\mathrm{small}}$ of Hodge type, 
satisfying
\begin{itemize}
\item
$\Qq^k_{\mathrm{small}}= \Hh^k$ for $0\leq k\leq 2r-2$ and for $n-2r+2\leq k\leq n$.
\item
if the multiplication map $\cdot: H^r \odot H^r \to H^{2r}$ is injective, then $\Qq^k_{\mathrm{small}}= \Hh^k$ also for $k = 2r-1$ and $n-2r+1$.
\end{itemize}
\end{theorem}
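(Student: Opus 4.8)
The plan is to combine the recursive description of the small algebra in (\ref{eq:recursion2}) with the duality built into the Poincar\'e pairing. By Proposition \ref{prop:minimal} and the discussion following it, $\Aa^\ast$ is equivalent to $\Qq^\ast_{\mathrm{small}} = \Aa^\ast_{\mathrm{small}}/(\Aa^\ast_{\mathrm{small}})_\perp$, which by Corollary \ref{cor:pi-QI} and the remark after it is a finite dimensional non-degenerate Poincar\'e DGCA of Hodge type. So the content to establish is the two bulleted identifications of the low- and high-degree pieces of $\Qq^\ast_{\mathrm{small}}$ with the harmonic spaces $\Hh^k$.

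For the low-degree range, I would read it directly off (\ref{eq:recursion2}): for $0 \leq k \leq 2r-2$ we already have $\Aa^k_{\mathrm{small}} = \Hh^k$, and since $\Hh^\ast$ is non-degenerate (Remark \ref{rem:nondegenerate}), the null space $(\Aa^\ast_{\mathrm{small}})_\perp$ meets $\Hh^k$ trivially in these degrees; more carefully, one checks using the decomposition (\ref{eq:decomp-N}) of the ideal that $(\Aa^\ast_{\mathrm{small}})_\perp$ has no component in degrees where $\Aa^\ast_{\mathrm{small}}$ is purely harmonic, since the ideal is acyclic and concentrated in the ``$dd^-$'' and ``$d^-d$'' pieces. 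Hence $\Qq^k_{\mathrm{small}} = \Hh^k$ for $0 \leq k \leq 2r-2$. When $\cdot: H^r \odot H^r \to H^{2r}$ is injective, (\ref{eq:K-gen}) gives $K^{2r} = 0$, hence $\Kk^{2r} = 0$, so $\Aa^{2r-1}_{\mathrm{small}} = \Hh^{2r-1} \oplus d^-\Kk^{2r} = \Hh^{2r-1}$, extending the identification to $k = 2r-1$ by the same null-space argument.

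For the high-degree range $n-2r+2 \leq k \leq n$ (and $k = n-2r+1$ in the injective case), the point is that the Poincar\'e pairing on the non-degenerate algebra $\Qq^\ast_{\mathrm{small}}$ is a perfect pairing $\Qq^k_{\mathrm{small}} \otimes \Qq^{n-k}_{\mathrm{small}} \to \F$, so $\dim \Qq^k_{\mathrm{small}} = \dim \Qq^{n-k}_{\mathrm{small}}$. Combined with the low-degree result, $\dim \Qq^k_{\mathrm{small}} = \dim \Hh^{n-k} = b^{n-k}(\Aa^\ast) = b^k(\Aa^\ast) = \dim \Hh^k$ for $k$ in the stated high range (using that $b^j = b^{n-j}$ by Poincar\'e duality on cohomology, and $\Hh^j \cong H^j$). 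Since $\Hh^k \subseteq \Qq^k_{\mathrm{small}}$ canonically (the harmonic space injects into the quotient, being disjoint from the null space), equality of dimensions forces $\Qq^k_{\mathrm{small}} = \Hh^k$.

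The main obstacle I anticipate is the bookkeeping for the null space $(\Aa^\ast_{\mathrm{small}})_\perp$: one must verify that it genuinely has no harmonic component and that the harmonic subspace $\Hh^\ast$ maps injectively into the quotient in every degree, so that ``$\Hh^k \subseteq \Qq^k_{\mathrm{small}}$'' is meaningful and the dimension count closes. This follows from Lemma \ref{lem:N-triv-cohom} — the ideal decomposes as $dd^-(\Aa^\ast_{\mathrm{small}})_\perp \oplus d^-d(\Aa^\ast_{\mathrm{small}})_\perp$ with trivial harmonic part — together with Remark \ref{rem:nondegenerate}, but it needs to be spelled out. The rest is the recursion (\ref{eq:recursion2}) plus Poincar\'e duality in cohomology, both already in hand.
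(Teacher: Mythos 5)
Your proposal is correct and follows essentially the same route as the paper: read off $\Aa^k_{\mathrm{small}}=\Hh^k$ in low degrees from (\ref{eq:recursion2}), note that $\Hh^k$ injects into the quotient $\Qq^k_{\mathrm{small}}$, use $\dim \Qq^k_{\mathrm{small}}=\dim\Qq^{n-k}_{\mathrm{small}}$ from non-degeneracy together with $b^k=b^{n-k}$ for the high-degree range, and use $K^{2r}=0$ from (\ref{eq:K-gen}) for the injectivity statement. The only cosmetic difference is that you invoke Lemma \ref{lem:N-triv-cohom} to kill the null space in purely harmonic degrees, where the paper just sandwiches $\dim\Hh^k\leq\dim\Qq^k_{\mathrm{small}}\leq\dim\Aa^k_{\mathrm{small}}=\dim\Hh^k$.
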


\begin{proof} This is an immediate consequence of (\ref{eq:recursion2}) as $\Hh^k \subseteq \Qq^k_{\mathrm{small}}$ and, as it is a quotient, $\dim \Qq^k_{\mathrm{small}} \leq \dim \Aa^k_{\mathrm{small}}$. Also, $\dim \Qq^k_{\mathrm{small}} = \dim \Qq^{n-k}_{\mathrm{small}}$ as $\Qq^\ast_{\mathrm{small}}$ is a non-degenerate Poincar\'e DGCA.

The last statement follows as the injectivity of this multiplication map implies that $K^{2r} = 0$ by (\ref{eq:K-gen}), whence $\Kk^{2r} = \imath_0(K^{2r}) = 0$, so that $\Aa^{2r-1}_{\mathrm{small}}  =  \Hh^{2r-1}$ by (\ref{eq:recursion2}).
\end{proof}

\begin{corollary} \label{cor:formal-lowdim}
Let $\Aa^\ast$ be a $(r-1)$-connected ($r > 1$) Poincar\'e-DGCA of Hodge type. 
Then $\Aa^\ast$ is equivalent to a finite dimensional non-degenerate Poincar\'e DGCA $\Qq^*_{\mathrm{small}}$ for which the differential $d\colon \Qq^{k-1}_{\mathrm{small}}\to \Qq^k_{\mathrm{small}}$ is possibly nonzero only for $2r\leq k\leq n-2r+1$. In particular
\begin{enumerate}
\item
If $n\leq 4r-2$, then $\Aa^\ast$ is formal.
\item
If $n =4r-1$, then $d: \Qq^{k-1}_{\mathrm{small}} \to \Qq^k_{\mathrm{small}}$ vanishes for $k \neq 2r$.
\end{enumerate}
\end{corollary}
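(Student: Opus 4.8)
The plan is to deduce everything directly from Theorem \ref{thm:dim-finite}, which already provides a finite dimensional non-degenerate Poincar\'e DGCA of Hodge type $\Qq^*_{\mathrm{small}}$ equivalent to $\Aa^\ast$ together with the information $\Qq^k_{\mathrm{small}} = \Hh^k$ in the two ranges $0\leq k\leq 2r-2$ and $n-2r+2\leq k\leq n$. So all that remains is a bookkeeping argument to locate the degrees in which the differential of $\Qq^*_{\mathrm{small}}$ can be nonzero.

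First I would use two elementary facts about $\Qq^*_{\mathrm{small}}$. On the one hand a harmonic subspace consists of cocycles, so $d$ vanishes on $\Qq^{k-1}_{\mathrm{small}}$ whenever $\Qq^{k-1}_{\mathrm{small}}=\Hh^{k-1}$, that is for $k\leq 2r-1$ (the case $k=0$ being trivial since $\Qq^{-1}_{\mathrm{small}}=0$). On the other hand $\Qq^*_{\mathrm{small}}$ carries a Hodge type decomposition whose harmonic summand is $\Hh^\ast$, so $\Hh^k$ is complementary to $d\Qq^{k-1}_{\mathrm{small}}$ inside $\Qq^k_{\mathrm{small}}$; hence $\Qq^k_{\mathrm{small}}=\Hh^k$ forces $d\Qq^{k-1}_{\mathrm{small}}=0$, which kills $d$ on $\Qq^{k-1}_{\mathrm{small}}$ for $k\geq n-2r+2$. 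Combining the two ranges shows that $d\colon \Qq^{k-1}_{\mathrm{small}}\to\Qq^k_{\mathrm{small}}$ is possibly nonzero only for $2r\leq k\leq n-2r+1$, which is the first assertion.

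Finally I would read off the two special cases. If $n\leq 4r-2$ then $n-2r+1\leq 2r-1$, so the interval $[2r,\,n-2r+1]$ is empty and the differential of $\Qq^*_{\mathrm{small}}$ vanishes identically; then $\Qq^*_{\mathrm{small}}$ coincides with its own cohomology, which is isomorphic as a DGCA to $H^\ast(\Aa^\ast)$ because the two algebras are equivalent, so $\Aa^\ast$ is formal. If $n=4r-1$ then $n-2r+1=2r$, so the interval reduces to the single value $k=2r$ and $d$ vanishes in every other degree.

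Since this is an immediate corollary of Theorem \ref{thm:dim-finite}, I do not expect any genuine obstacle; the only point that needs a moment's care is the translation of the two ``harmonic ranges'' for $\Qq^k_{\mathrm{small}}$ into one range of degrees where $d$ can be nonzero, which uses once that harmonic elements are closed and once that the harmonic part is a complement to $d\Qq^{\ast-1}_{\mathrm{small}}$, together with the standard fact that formality is preserved under equivalence of DGCAs.
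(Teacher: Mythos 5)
Your argument is correct and matches the paper's (implicit) reasoning: the corollary is stated there without a separate proof precisely because it follows from Theorem \ref{thm:dim-finite} (equivalently, from the recursion (\ref{eq:recursion2})) by the same bookkeeping you carry out — harmonic elements are closed, so $d$ dies on $\Qq^{k-1}_{\mathrm{small}}=\Hh^{k-1}$ for $k\leq 2r-1$, and $\Hh^k$ being complementary to $d\Qq^{k-1}_{\mathrm{small}}$ kills $d$ into degrees $k\geq n-2r+2$. The two special cases then follow by checking when the interval $[2r,\,n-2r+1]$ is empty or a single point, exactly as you write.
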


Of course, this applies to $\Aa^\ast = \Om^\ast(M)$ where $M$ is a closed oriented connected $n$-dimensional manifold with $b^k(M) = 0$, $1 \leq k \leq r-1$.

\begin{remark}\label{rem:Miller} 
\begin{enumerate}
\item
Corollary \ref{cor:formal-lowdim} (1) for DGCAs   associated with closed (orientable) manifolds has  been  first proved  by  Miller in \cite{Miller1979} using Quillen's functor, including the statement that any closed simply connected manifold of dimension $\leq 6$ is formal. It  can be also derived from Crowley-Nordstr\"om's  result \cite[Theorem 1.3]{CN} based on  their invented Bianchi-Massey tensor.
\item
If $M$ is a simply connected $7$-manifold, then Corollary \ref{cor:formal-lowdim} implies that $M$ is {\em almost formal} in the sense of \cite{CKT}. That is, $\Om^\ast(M)$ is equivalent to a DGCA $\Qq^\ast$ whose only possibly non-vanishing differential is $d: \Qq^3 \to \Qq^4$. This almost-formality was shown in \cite{CKT} for $G_2$-manifolds, but as our result shows, the $G_2$-structure is not needed.
\end{enumerate}
\end{remark}

\section{$A_\infty$-quasi-isomorphism type  of highly connected   Poincar\'e DGCAs of Hodge type}\label{sec:ainfty}

In this section, using  homotopy transfer theorem and related technique, we   prove that
a $(r-1)$ connected  Poincare  DGCA $\Aa^\ast$ of Hodge type of dimension  $n \le 5r-3$ is $A_\infty$-quasi-isomorphic   to an $A_3$-algebra (Theorem \ref{thm:A2-algebra}),
that the only obstruction to the formality of $\Aa^\ast$  is  the cohomology class
$[\mu_3] \in  {\mathsf{Hoch}}^2 (H^*(\Aa^\ast), H^*(\Aa^*))$, where $\mu_3$ is the class of the ternary multiplication  in the $A_3$-algebra (Theorem \ref{thm:obstruction}), and that the cohomology class  
$[\mu_3]$ and the DGCA isomorphism class of $H^*(\Aa^\ast)$ determine the $A_\infty$-quasi-isomorphism class of $\Aa^\ast$ (Theorem   \ref{thm:detects}). 
Then  we show the relation between $\mu_3$  and the Massey product (Remark \ref{rem:Massey}, Corollary \ref{cor:Massey-b1})  and compare our result  with a closely related result  by  Crowley-Nordstr\"om (Remark \ref{rem:compareCN}).

\begin{theorem}\label{thm:A2-algebra}
Let $\Aa^\ast$ be a $(r-1)$-connected $(r>1)$ Poincar\'e DGCA of degree $n$, endowed with a Hodge-type decomposition whose harmonic subspace $\Hh^\ast$ is adapted to a generating subspace $H^\ast_{gen}$. If $n\leq 5r-3$ then $\Hh^\ast$ carries an $A_3$-algebra structure\footnote{i.e., an $A_\infty$ algebra structure with vanishing multiplications $m_k$ for $k\geq 4$} (with trivial differential) making it $A_\infty$-quasi-isomorphic to $\Aa^\ast$. Moreover, if $n\leq 4r-2$ then the ternary multiplication of this $A_3$-algebra vanishes, so in particular $\Aa^\ast$ is formal.
\end{theorem}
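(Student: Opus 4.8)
The plan is to apply the homotopy transfer theorem (HTT) to the quasi-isomorphism $\imath\colon \Hh^\ast \hookrightarrow \Aa^\ast$, using the contraction data $(\imath, pr_{\Hh^\ast}, d^-)$ supplied by the Hodge-type decomposition (\ref{eq:Hodge-type-d-}), and then to bound the transferred operations $m_k$ by degree-counting. First I would set up the transferred $A_\infty$-structure on $\Hh^\ast$: $m_2$ is the product of $\Aa^\ast$ projected back to $\Hh^\ast$, and for $k\ge 3$ the operation $m_k$ is the usual sum over planar binary trees with $k$ leaves, with $\imath$ on the leaves, the multiplication $\mu$ at the internal vertices, the propagator $h:=d^-$ on the internal edges, and $pr_{\Hh^\ast}$ at the root. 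Because $\imath$ lands in cocycles and $d$ vanishes on $\Hh^\ast$, all trees contributing to $m_k$ have the shape dictated by the recursion in Proposition~\ref{prop:minimal}; in particular every internal edge carries a factor $d^-$, and each such factor lowers total degree by $1$.

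Next comes the degree count, which is the heart of the argument. A tree computing $m_k$ has $k$ inputs and $k-2$ internal edges (equivalently $k-1$ internal trivalent vertices). Since $\Aa^\ast$ is $(r-1)$-connected and $\imath_0$-adapted, $\Hh^j=0$ for $1\le j\le r-1$, so every input of $m_k$ carries degree $\ge r$ once we are off degree $0$; the output of $m_k$ has degree $\sum |\alpha_i| - (k-2)$ after accounting for the $k-2$ applications of $d^-$ (together with the cohomological grading shift $2-k$ of $m_k$). For a nonzero output we need the output degree to lie in $[0,n]$, and each intermediate node to lie in $[0,n]$ as well. Feeding in $k$ inputs each of degree $\ge r$ forces the total input degree $\ge kr$, hence an intermediate product of two subtrees each carrying $\ge r$ inputs has degree roughly $\ge 2r - 1$ at the lowest internal node, and the relevant constraint $n\le 5r-3$ is exactly what makes $m_4$ (which needs $4$ inputs, $2$ internal edges, and thus produces output degree $\ge 4r-2$ while needing room $\le n \le 5r-3$ for the intermediate trees of degree $\ge 2r-1$ each) identically vanish. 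Carrying out this bookkeeping carefully — tracking the degree of \emph{every} subtree, not just the root — shows $m_k=0$ for all $k\ge 4$, so the transferred structure is an $A_3$-algebra, and by the HTT it is $A_\infty$-quasi-isomorphic to $\Aa^\ast$.

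For the last sentence: if $n\le 4r-2$, then $m_3$ itself must vanish. Here $m_3$ takes $3$ inputs, uses one internal edge (one $d^-$), and has cohomological degree $-1$; its output on inputs of degrees $a,b,c\ge r$ has degree $a+b+c-1\ge 3r-1 > n$ unless one of the inputs is in degree $0$, in which case $m_3$ vanishes by (graded) unitality. Equivalently, one invokes Corollary~\ref{cor:formal-lowdim}(1): for $n\le 4r-2$ the small quotient algebra $\Qq^\ast_{\mathrm{small}}$ has vanishing differential, hence $\Aa^\ast$ is formal, hence the minimal $A_\infty$-model on $H^\ast(\Aa^\ast)\cong\Hh^\ast$ can be taken with all higher $m_k=0$; combined with the $A_3$-truncation just proved, $m_3=0$. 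The main obstacle I anticipate is not any single step but the combinatorial discipline of the degree estimate: one must verify that \emph{all} nodes of \emph{all} contributing trees stay within the grading window $[0,n]$, i.e.\ that a partial product cannot temporarily escape to a degree $>n$ and come back, and that the connectedness bound $\Hh^j=0$ for $j<r$ is applied at the leaves only (not at internal nodes, which can have small degree only through the degree-lowering $d^-$). Making this precise — perhaps by an induction on $k$ showing every subtree output has degree $\ge (\#\text{leaves})\cdot r - (\#\text{internal edges})$ with equality-forcing only when all leaves are harmonic generators — is where the real work lies.
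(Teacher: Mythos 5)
Your overall strategy --- homotopy transfer along the contraction $(\imath,\pi_{\Hh^\ast},d^-)$ followed by a degree count on the tree-summation formula --- is the same as the paper's, but the degree count as you describe it does not close, and the gap is substantive. Requiring only that ``the output degree and each intermediate node lie in $[0,n]$'' kills nothing: for $m_4$ on inputs of degree $\ge r$ the output degree is $\ge 4r-2$, which is $\le n$ throughout the range $4r-1\le n\le 5r-3$, and on $\Aa^\ast$ itself $d^-$ is nonzero in essentially every degree, so intermediate nodes are unconstrained. The two ingredients actually needed, and absent from your proposal, are: (a) the Poincar\'e-duality gap $\Hh^m=0$ for $n-r+1\le m\le n-1$, which together with $4r-2> n-r$ (this inequality is exactly $n\le 5r-3$) forces every tree contribution to $m_k$, $k\ge 4$, to land either in a zero group or in the top degree $m=n$; and (b) the orthogonality relations (\ref{def:Hodge-DGA2})--(\ref{eq:Hodge-DGA3}) of the Hodge-type decomposition, which dispose of the top-degree case: the root vertex of any tree for $m_k$, $k\ge 3$, computes $\pi_{\Hh^\ast}(X\cdot Y)$ with at least one of $X,Y$ in $d^-\Aa^\ast$, and then $\int X\cdot Y=\langle X,Y\rangle=0$ because $\langle \Hh^\ast\oplus d^-\Aa^\ast, d^-\Aa^\ast\rangle=0$. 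Without (b) you cannot rule out, for instance, $m_4$ on inputs of degrees $(2,2,2,3)$ with $r=2$, $n=7$, whose output sits in $\Hh^7\cong\F\neq 0$. The paper simplifies this bookkeeping by first replacing $\Aa^\ast$ with the small quotient algebra $\Qq^\ast_{\mathrm{small}}$ of (\ref{eq:recursion5}), on which $d^-$ vanishes outside the window $[2r,n-2r+1]$, so that only two subtree configurations must be excluded --- but even there the top-degree case is handled by the pairing, not by degree counting alone.

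Your argument for the vanishing of $m_3$ when $n\le 4r-2$ contains an outright arithmetic error: $a+b+c-1\ge 3r-1$ is \emph{not} greater than $n$ in general (for $r=2$, $n=6$ one has $3r-1=5\le 6$). The correct observation is again $3r-1>n-r$, so the output lands in the Poincar\'e gap or in degree $n$, with the latter killed by the orthogonality argument (b). Your fallback of invoking Corollary \ref{cor:formal-lowdim}(1) only yields formality, i.e., the existence of \emph{some} quasi-isomorphic model with vanishing ternary product; it does not show that the transferred $m_3$ itself is zero, which is what the theorem asserts. (The paper gets this for free because for $n\le 4r-2$ all the complements $\Ll^k$ in $\Qq^\ast_{\mathrm{small}}$ vanish, so $d^-\equiv 0$ there.)
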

\begin{proof}
 Let us therefore focus on the case $4r-1\leq n\leq 5r-3$. As we know that the smallest $\Hh^\ast$-quotient  $\Qq^\ast_{\mathrm{small}}$ of $\Aa^\ast$ is  equivalent 
to $\Aa^*$ by Corollary \ref{cor:formal-lowdim}, 
we only need to prove that $\Hh^\ast$ carries an $A_3$-algebra structure with trivial differential making it quasi-isomorphic to $\Qq^\ast_{\mathrm{small}}$. We can show this by means of the homotopy transfer theorem \cite{Kadeishvili1980, Merkulov}.
\par
By Poincar\'e duality and equation (\ref{eq:recursion2})  $\Qq^\ast_{\mathrm{small}}$ is given by
\begin{equation}\label{eq:recursion5}
\begin{cases}
&\Qq^0_{\mathrm{small}}=\Hh^0\cong \F\\
& \Qq^k_{\mathrm{small}} = \Hh^k= 0, \qquad\qquad\qquad\qquad 1\leq k\leq r-1\\
&\Qq^k_{\mathrm{small}}  =   \Hh^k, \qquad\qquad\qquad\qquad r\leq k\leq 2r-2\\
 &\Qq^{k}_{\mathrm{small}}  =  \Hh^{k} \oplus \Ll^k, \qquad\qquad\qquad\qquad 2r-1 \leq k\leq n-2r+1\\ 
 &\Qq^{k}_{\mathrm{small}}  =   \Hh^{k}, \qquad\qquad\qquad\qquad n-2r+2 \leq k\leq n-r\\
& \Qq^{k}_{\mathrm{small}} = \Hh^{k}= 0, \qquad\qquad\qquad\qquad n-r+1\leq k\leq n-1\\
 &\Qq^{n}_{\mathrm{small}}=\Hh^{n}\cong\F.
\end{cases}
\end{equation}
where $\Ll^k=dd^-\Qq^{k}_{\mathrm{small}} \oplus d^-d\Qq^{k}_{\mathrm{small}}$ for every $k$ and, in particular
\[
\Ll^{2r-1}=d^- \mu(\Hh^r\odot \Hh^r).
\]
Notice  that  $d^-$ identically vanishes on $\Qq^{k}_{\mathrm{small}}$ for $k\leq 2r-1$ and for $k\geq n-2r+2$. By the homotopy transfer theorem, $\Hh^\ast$ carries an $A_\infty$-algebra structure making it $A_\infty$-quasi-isomorphic to  $\Qq^\ast_{\mathrm{small}}$. To prove the statement in the theorem we therefore only need to show that this $A_\infty$-algebra structure is actually an $A_3$-
algebra structure, i.e., that all the multiplications $m_k$ vanish for $k\geq 4$. One has a convenient tree summation formula to express the higher multiplications $m_k$ obtained by homotopy transfer, see \cite{KS2001}. Namely, $m_k$ can be expressed as a sum over rooted trivalent trees with $k$ leaves. Each tail edge of such
a tree is decorated by inclusion $j\colon \Hh^\ast\hookrightarrow \Qq^\ast_{\mathrm{small}}$, each internal edge is decorated by the operator $d^-\colon \Qq^\ast_{\mathrm{small}}\to \Qq^{\ast-1}_{\mathrm{small}}$ and
the root edge is decorated by the operator $\pi_{\Hh^\ast}\colon  \Qq^\ast_{\mathrm{small}}\to \Hh^\ast$; every
internal vertex is decorated by the multiplication $\mu$ in $\Qq^\ast_{\mathrm{small}}$. 
\par
In order to get a nonzero operation, we can not have a subgraph of the form
\[
\begin{xy}
,(-24,0);(-19.2,3.2)*{\,\,\,\scriptstyle{d^-}\,}**\dir{-}
,(-19.2,3.2)*{\,\,\,\scriptstyle{d^-}\,};
(-12,8)*{\,\,\scriptstyle{\mu}\,\,}**\dir{-}?>*\dir{>}
,(-12,8)*{\,\,\scriptstyle{\mu}\,\,};(-6,4)*{\,\scriptstyle{?}\,}**\dir{-}
,(-6,4)*{\,\scriptstyle{?}\,};
(0,0)**\dir{-}?>*\dir{>}
,(-24,16);(-18,12)*{\,\scriptstyle{d^-}\,}**\dir{-}
,(-18,12)*{\,\scriptstyle{d^-}\,};
(-12,8)*{\,\,\scriptstyle{\mu}\,\,}**\dir{-}?>*\dir{>}
\end{xy}
\]
where ? can be either $d^-$ or $\pi_{\Hh^\ast}$.
Namely, in order to get a possibly nonzero contribution from this graph, its homogeneous entries $a_{k_1}$ and $b_{k_2}$ should be in $\Qq^{k_1}_{\mathrm{small}}$ and $\Qq^{k_2}_{\mathrm{small}}$ with $k_1,k_2\geq 2r$. The term $d^-a_{k_1}\cdot d^-b_{k_2}$ will then be in $\Qq^{k}_{\mathrm{small}}$ with $k\geq 4r-2$, where $k=k_1+k_2-2$. As $n\leq 5r-3$ we have $4r-2\geq n-r+1$, and so $k\geq n-r+1$. Therefore, $d^-a_{k_1}\cdot d^-b_{k_2}$ is surely zero unless $k_1+k_2-2=n$. For $k_1+k_2-2=n$ we can compute
\[
\int d^-a_{k_1}\cdot d^-b_{k_2}=\langle d^-a_{k_1}, d^-b_{k_2}\rangle = 0,
\]
by the orthogonality relation (\ref{eq:Hodge-DGA3}). As $\int\colon \Qq^{n}_{\mathrm{small}}\to \mathbb{R}$ is an isomorphism, this gives $d^-a_{k_1}\cdot d^-b_{k_2}=0$.
\\
Also, we can not have a subgraph of the form
\[
\begin{xy}
,(-24,0);(-19.2,3.2)*{\,\scriptstyle{j}\,}**\dir{-}
,(-19.2,3.2)*{\,\scriptstyle{j}\,};
(-12,8)*{\,\,\scriptstyle{\mu}\,\,}**\dir{-}?>*\dir{>}
,(-12,8)*{\,\,\scriptstyle{\mu}\,\,};(-6,4)*{\,\scriptstyle{d^-}\,}**\dir{-}
,(-6,4)*{\,\scriptstyle{d^-}\,};
(0,0)**\dir{-}?>*\dir{>}
,(-24,16);(-18,12)*{\,\scriptstyle{d^-}\,}**\dir{-}
,(-18,12)*{\,\scriptstyle{d^-}\,};
(-12,8)*{\,\,\scriptstyle{\mu}\,\,}**\dir{-}?>*\dir{>}
\end{xy}
\]
Namely, if  its homogeneous entries are $a_{k_1}$ and $b_{k_2}$ (from top to bottom), then we need to have $k_1\geq 2r$ and $k_1+k_2-1\leq n-2r+1$ in order to have a possibly nonzero contribution. The two inequalities together give $k_2\leq n-4r+2\leq r-1$. So the only possibility for having a nonzero contribution is $k_2=0$. As $\Qq^{0}_{\mathrm{small}}=\mathbb{F}$ acting as scalars on $\Qq^{\ast}_{\mathrm{small}}$ via the multiplication $\mu$, we have
\[
d^-(d^-a_{k_1}\cdot b_0)=b_0(d^-)^2a_{k_1}=0.
\]
In conclusion, the only two graphs actually appearing in the tree summation formula are (absorbing the permutations of the tree branches in the graded commutativity of the multiplication) are
\[
\begin{xy}
,(-12,-8);(-7.2,-4.8)*{\,\scriptstyle{j}\,}**\dir{-}
,(-7.2,-4.8)*{\,\scriptstyle{j}\,};
(0,0)*{\,\,\scriptstyle{\mu}\,}**\dir{-}?>*\dir{>}
,(-12,8);(-6,4)*{\,\scriptstyle{j}\,}**\dir{-}
,(-6,4)*{\,\scriptstyle{j}\,};
(0,0)*{\,\,\scriptstyle{\mu}\,}**\dir{-}?>*\dir{>}
,(0,0)*{\,\,\scriptstyle{\mu}\,};
(9.6,0)*{\,\scriptstyle{\pi_{\Hh^\ast}}\,}**\dir{-}
,(9.6,0)*{\,\scriptstyle{\pi_{\Hh^\ast}}\,};
(19.2,0)**\dir{-}?>*\dir{>}
\end{xy}\qquad;
\qquad
\begin{xy}
,(-12,-8);(-7.2,-4.8)*{\,\scriptstyle{j}\,}**\dir{-}
,(-7.2,-4.8)*{\,\scriptstyle{j}\,};
(0,0)*{\,\,\scriptstyle{\mu}\,}**\dir{-}?>*\dir{>}
,(-24,0);(-19.2,3.2)*{\,\scriptstyle{j}\,}**\dir{-}
,(-19.2,3.2)*{\,\scriptstyle{j}\,};
(-12,8)*{\,\,\scriptstyle{\mu}\,\,}**\dir{-}?>*\dir{>}
,(-12,8)*{\,\,\scriptstyle{\mu}\,\,};(-6,4)*{\,\scriptstyle{d^-}\,}**\dir{-}
,(-6,4)*{\,\scriptstyle{d^-}\,};
(0,0)*{\,\,\scriptstyle{\mu}\,}**\dir{-}?>*\dir{>}
,(-24,16);(-18,12)*{\,\scriptstyle{j}\,}**\dir{-}
,(-18,12)*{\,\scriptstyle{j}\,};
(-12,8)*{\,\,\scriptstyle{\mu}\,\,}**\dir{-}?>*\dir{>}
,(0,0)*{\,\,\scriptstyle{\mu}\,};
(9.6,0)*{\,\scriptstyle{\pi_{\Hh^\ast}}\,}**\dir{-}
,(9.6,0)*{\,\scriptstyle{\pi_{\Hh^\ast}}\,};
(19.2,0)**\dir{-}?>*\dir{>}
\end{xy},
\]
defining the multiplications $m_2\colon \Hh^{k_1}\otimes \Hh^{k_2}\to \Hh^{k_1+k_2}$ and $m_3\colon \Hh^{k_1}\otimes \Hh^{k_2}\otimes \Hh^{k_3}\to \Hh^{k_1+k_2+k_2-1}$, respectively.
\\
If $n\leq 4r-2$, we can verbatim repeat the above argument, but now all the $\Ll^k$;s are zero so also the multiplication $m_3$ vanishes. The $A_3$-algebra structure on $\Hh^\ast$ therefore becomes a DGCA structure with trivial differential, and we recover the formality of $\Aa^\ast$ in this case (Corollary \ref{cor:formal-lowdim}).
\end{proof}
\begin{remark}\label{rem:expilicy-m3}
The multiplications $m_2$ and $m_3$ of the $A_3$ algebra structure on $\Hh^\ast$ described in the proof of Theorem \ref{thm:A2-algebra} are explicitly given by 
\begin{align}
m_2(\alpha,\beta)&=\pi_{\Hh^\ast}(\alpha\cdot\beta)\label{eq:mu2}\\
m_3(\alpha,\beta,\gamma)&=\pi_{\Hh^\ast}(d^-(\alpha\cdot \beta)\cdot\gamma-(-1)^{\deg \alpha} \alpha\cdot d^-(\beta\cdot\gamma))\label{eq:mu3}
\end{align}
see \cite[Theorem 3.4]{Merkulov}.
Moreover, as the pairing $\langle-,-\rangle$ on $\Hh^\ast$ is nondegenerate, the ternary multiplication $m_3$ is completely encoded in the tensor
\[
\tau(\alpha,\beta,\gamma,\delta)=\langle m_3(\alpha,\beta,\gamma),\delta\rangle. 
\] 
\end{remark}

\begin{remark}\label{rem:1-gives-0}
It is immediate to see that
\[
m_3(\alpha,\beta,\gamma)=0 \qquad \text{if}\quad \min\{\deg\alpha,\deg\beta,\deg\gamma\}=0.
\]
Indeed, the operator $d^-$ vanishes on $\Hh^\ast$, the projection $\pi_{\Hh^\ast}$ vanishes on the image of $d^-$ and the elements in $ \Hh^0\subseteq \Aa^0$ act as scalars via the multiplication in $\Aa^\ast$. From this one sees that the expression for $m_3$ given in Remark \ref{rem:expilicy-m3} identically vanishes when one of the arguments of $m_3$ has degree zero.
\end{remark}

\begin{corollary}\label{cor:towards-hoch}
Let $\Aa^\ast$ a $(r-1)$-connected ($r>1)$ Poincar\'e DGCA of degree $n$ of Hodge type with an adapted choice $\imath\colon H^*(\Aa^\ast)\hookrightarrow \Aa^\ast_d$ of harmonic forms. If $n\leq 5r-3$, then $H^\ast(\Aa^\ast)$ carries an $A_3$-algebra structure with zero differential, whose binary multiplication is the  the multiplication induced by $\Aa^\ast$, while the ternary multiplication is
\[
\mu_3([\alpha],[\beta],[\gamma])=[m_3(\imath[\alpha],\imath[\beta],\imath[\gamma])]
\]
making it $A_\infty$-quasi-isomorphic to $\Aa^\ast$. Moreover, if $n\leq 4r-2$ then $\mu_3$ vanishes.
\end{corollary}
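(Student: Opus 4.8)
The plan is to transport the $A_3$-algebra structure from $\Hh^\ast$, as obtained in Theorem \ref{thm:A2-algebra}, to $H^\ast(\Aa^\ast)$ via the splitting map $\imath$. Since $\Hh^\ast$ is a harmonic subspace with trivial differential, the projection $pr\colon \Aa^\ast_d\to H^\ast$ restricts to a linear isomorphism $pr|_{\Hh^\ast}\colon \Hh^\ast\xrightarrow{\sim}H^\ast(\Aa^\ast)$ whose inverse is precisely $\imath$ (this is part of the content of Remark \ref{rem:nondegenerate}, as $\imath$ is a splitting map with image $\Hh^\ast$). So I would simply declare $m_k^{H}:=pr\circ m_k\circ(\imath^{\otimes k})$ for all $k$. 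Because $\imath$ and $pr|_{\Hh^\ast}$ are mutually inverse linear isomorphisms of graded vector spaces, this is a \emph{strict} isomorphism of graded vector spaces, hence automatically an $A_\infty$-isomorphism; transporting an $A_\infty$-structure along such an isomorphism yields an $A_\infty$-structure, and it is $A_\infty$-quasi-isomorphic (in fact isomorphic) to the one on $\Hh^\ast$, which in turn is $A_\infty$-quasi-isomorphic to $\Aa^\ast$ by Theorem \ref{thm:A2-algebra}. Since $m_k=0$ on $\Hh^\ast$ for $k\geq 4$, we get $m_k^H=0$ for $k\geq 4$ as well, so $H^\ast(\Aa^\ast)$ inherits an $A_3$-structure with zero differential (the differential $m_1^H=pr\circ m_1\circ\imath$ vanishes because $m_1=d|_{\Hh^\ast}=0$).

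The second step is to identify $m_2^H$ and $m_3^H$ explicitly. For the binary product: $m_2^H([\alpha],[\beta])=pr(m_2(\imath[\alpha],\imath[\beta]))=pr(\pi_{\Hh^\ast}(\imath[\alpha]\cdot\imath[\beta]))$ using formula \eqref{eq:mu2}. Now $\imath[\alpha]\cdot\imath[\beta]$ is a cocycle, and writing it as $\pi_{\Hh^\ast}(\imath[\alpha]\cdot\imath[\beta])+dd^-(\imath[\alpha]\cdot\imath[\beta])$ via \eqref{eq:projection}, the exact part is killed by $pr$, so $m_2^H([\alpha],[\beta])=pr(\imath[\alpha]\cdot\imath[\beta])=[\imath[\alpha]]\cdot[\imath[\beta]]=[\alpha]\cdot[\beta]$, the ordinary cup product on $H^\ast(\Aa^\ast)$. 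For the ternary product, $m_3^H([\alpha],[\beta],[\gamma])=pr(m_3(\imath[\alpha],\imath[\beta],\imath[\gamma]))$, and since $m_3$ on $\Hh^\ast$ already lands in $\Hh^\ast$ and $pr|_{\Hh^\ast}=\imath^{-1}$, applying $pr$ is the same as taking the cohomology class: $m_3^H([\alpha],[\beta],[\gamma])=[m_3(\imath[\alpha],\imath[\beta],\imath[\gamma])]$, which is exactly the claimed formula for $\mu_3$.

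Finally, the last sentence: if $n\leq 4r-2$, Theorem \ref{thm:A2-algebra} already gives that $m_3$ vanishes on $\Hh^\ast$ (the $A_3$-structure degenerates to a DGCA structure), hence $\mu_3=pr\circ m_3\circ(\imath^{\otimes 3})=0$, so the transported structure on $H^\ast(\Aa^\ast)$ is just the cup-product DGCA with zero differential, recovering formality once more. I do not expect any genuine obstacle here: the only point requiring a little care is to be clear that transporting an $A_\infty$-structure along a strict linear isomorphism is formal nonsense (it preserves the defining Stasheff identities term by term, since each identity is a sum of compositions of the $m_k$ with the grading shifts, and conjugating every $m_k$ by the same isomorphism conjugates the whole identity), so that one does not need to re-invoke the homotopy transfer theorem for $H^\ast(\Aa^\ast)$ directly — although one could equally well apply the homotopy transfer theorem to the contraction of $\Qq^\ast_{\mathrm{small}}$ onto $H^\ast(\Aa^\ast)$ given by $\imath\circ pr$ together with the same homotopy $d^-$ and get the same formulas. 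The mild bookkeeping nuisance is keeping track of the Koszul signs in the compatibility check, but these are already encoded in \eqref{eq:mu3} and need not be redone.
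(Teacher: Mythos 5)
Your proposal is correct and follows essentially the same route as the paper: transport the $A_3$-structure of Theorem \ref{thm:A2-algebra} along the strict linear isomorphism $\imath\colon H^\ast(\Aa^\ast)\xrightarrow{\sim}\Hh^\ast$ (with inverse the restriction of $pr$), and then check that the transported binary product is the cup product by observing that $\imath[\alpha]\cdot\imath[\beta]$ differs from its harmonic projection by an exact term. The only difference is that you spell out the "formal nonsense" of transporting along a strict isomorphism in slightly more detail than the paper does; the substance is identical.
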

\begin{proof}
The isomorphism $\imath\colon H^\ast(\Aa^\ast)\xrightarrow{\sim} \Hh^\ast$ with inverse $\pi_{\Hh^\ast}\colon \Hh^\ast\xrightarrow{\sim} H^\ast(\Aa^\ast)$ transfers the $A_3$-algebra structure on $\Hh^\ast$ from Theorem  \ref{thm:A2-algebra} to an $A_3$-algebra structure with the same properties on $H^\ast(\Aa^\ast)$. So the only thing we need to prove is the identity
\[
[m_2(\imath[\alpha],\imath[\beta])]=[\alpha]\cdot[\beta].
\]
This is straightforward, as 
\[
[\alpha]\cdot[\beta]=[(\imath[\alpha])\cdot(\imath[\beta])]=[\pi_{\Hh^\ast}((\imath[\alpha])\cdot(\imath[\beta]))]=[m_2(\imath[\alpha],\imath[\beta])].
\]
\end{proof}

\begin{lemma}
The degree -1 trilinear map $\mu_3\colon H^{k_1}(\Aa^\ast)\otimes H^{k_2}(\Aa^\ast))\otimes H^{k_3}(\Aa^\ast)\to H^{k_1+k_2+k_3-1}(\Aa^\ast)$  from Corollary \ref{cor:towards-hoch} is a cocycle in the Hochschild complex of the DGCA $H^\ast(\Aa^\ast)$, and so it defines a Hochschild cohomology class $[\mu_3]\in \mathsf{Hoch}^{3,-1}(H^\ast(\Aa^\ast),H^\ast(\Aa^\ast))\subseteq  \mathsf{Hoch}^{2}(H^\ast(\Aa^\ast),H^\ast(\Aa^\ast))$.
\end{lemma}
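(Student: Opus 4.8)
The plan is to obtain the cocycle condition directly from the $A_\infty$-relations of the transferred structure. By Corollary~\ref{cor:towards-hoch}, the data $(H^\ast(\Aa^\ast),\, m_1 = 0,\ m_2 = \cdot,\ m_3 = \mu_3)$ with $m_k = 0$ for all $k\geq 4$ constitutes an $A_\infty$-algebra. Equivalently, packaging the $m_k$ into a single element $m = m_1 + m_2 + m_3 + \cdots$ of the (suitably shifted) Hochschild complex of $H^\ast(\Aa^\ast)$, the $A_\infty$-relations are encoded by $[m,m]_G = 0$, where $[-,-]_G$ is the Gerstenhaber bracket. First I would extract the component of $[m,m]_G = 0$ of word length $4$ (the Gerstenhaber bracket of cochains of word lengths $p,q$ has word length $p+q-1$).

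Since $m_1 = 0$ and $m_k = 0$ for $k\geq 4$, the only term surviving in word length $4$ is $[m_2,m_3]_G$ (together with $[m_3,m_2]_G$, which equals it up to sign), so the relation collapses to $[m_2,m_3]_G = 0$. On the other hand, for the graded commutative associative product $m_2$ of $H^\ast(\Aa^\ast)$ the Hochschild coboundary is by definition $\delta_{\mathrm{Hoch}} = [m_2,-]_G$, hence $\delta_{\mathrm{Hoch}}(\mu_3) = [m_2,m_3]_G = 0$. Concretely, evaluated on $a_0\otimes a_1\otimes a_2\otimes a_3$ this is, up to the Koszul signs dictated by the degrees of the $a_i$, the familiar five-term expression
\[
a_0\cdot\mu_3(a_1,a_2,a_3)\ -\ \mu_3(a_0 a_1, a_2, a_3)\ +\ \mu_3(a_0, a_1 a_2, a_3)\ -\ \mu_3(a_0, a_1, a_2 a_3)\ +\ \mu_3(a_0,a_1,a_2)\cdot a_3 \ =\ 0 .
\]
Thus $\mu_3$ is a Hochschild cocycle.

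It remains to record the bidegree. The map $\mu_3$ is trilinear, so has word length $3$; and by its definition in Corollary~\ref{cor:towards-hoch} it sends $H^{k_1}\otimes H^{k_2}\otimes H^{k_3}$ into $H^{k_1+k_2+k_3-1}$, i.e.\ it has internal degree $-1$. Hence $\mu_3\in\mathsf{Hoch}^{3,-1}(H^\ast(\Aa^\ast),H^\ast(\Aa^\ast))$. As $\delta_{\mathrm{Hoch}}$ raises word length by one and preserves internal degree, the bigraded piece $\mathsf{Hoch}^{3,-1}$ is mapped into $\mathsf{Hoch}^{4,-1}$; since $\delta_{\mathrm{Hoch}}(\mu_3)=0$, the class $[\mu_3]\in\mathsf{Hoch}^{3,-1}(H^\ast(\Aa^\ast),H^\ast(\Aa^\ast))$ is well defined, and because the total Hochschild degree of a $(p,q)$-cochain is $p+q=3+(-1)=2$, it also lives in $\mathsf{Hoch}^{2}(H^\ast(\Aa^\ast),H^\ast(\Aa^\ast))$.

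The only genuine subtlety is sign bookkeeping: one must fix a convention for the $A_\infty$-relations (equivalently, for the Gerstenhaber bracket on the suspension $sH^\ast(\Aa^\ast)$) and check that it matches the Koszul-signed Hochschild differential used above. An alternative, entirely self-contained route — which I would mention but not carry out in detail — is to substitute the explicit formula~\eqref{eq:mu3} for $m_3$ into the five-term expression and simplify using $dd^-d=d$, $d^-dd^-=d^-$ and the projector identities~\eqref{eq:projection}; this is a finite but more tedious computation that again requires care with signs. In either case, Remark~\ref{rem:1-gives-0} shows in addition that $\mu_3$ is a normalized (reduced) Hochschild cochain, which is the feature that will later permit identifying $[\mu_3]$ with a Harrison cohomology class.
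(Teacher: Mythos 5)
Your proof is correct and is essentially the paper's argument written out in full: the paper simply declares the lemma ``immediate'' from Corollary \ref{cor:towards-hoch} (citing Lunts for details), and what is implicit there is exactly your observation that the arity-4 component of the $A_\infty$-relations for $(H^\ast(\Aa^\ast),0,\cdot,\mu_3)$, with $m_1=0$ and $m_k=0$ for $k\ge 4$, collapses to $[m_2,\mu_3]_G=\delta_{\mathsf{Hoch}}(\mu_3)=0$. Your bidegree bookkeeping and the remark on normalization are also consistent with the paper's conventions.
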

\begin{proof}
Immediate from Corollary \ref{cor:towards-hoch}. see, e.g., \cite{Lunts2007} for details.
\end{proof}

\begin{theorem}\label{thm:obstruction}
Let $\Aa^\ast$ a $(r-1)$-connected ($r>1)$ Poincar\'e DGCA of degree $n$ of Hodge type  with $n\leq 5r-3$. The Hochschild cohomology class $[\mu_3]$ is independent of the choice of adapted harmonic forms and of associated Hodge decomposition, and so it defines a distinguished element
\[
[\mu_3^{\Aa^\ast}]\in \mathsf{Hoch}^2(H^\ast(\Aa^\ast),H^\ast(\Aa^\ast))
\]
depending only on the Poincar\'e DGCA $\Aa^\ast$. Moreover the class $[\mu_3^{\Aa^\ast}]$ is the only obstruction to the formality of $\Aa^\ast$. 
\end{theorem}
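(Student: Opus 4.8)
The plan is to prove the two assertions — (a) well-definedness of $[\mu_3^{\Aa^\ast}]$ and (b) the fact that it is the complete obstruction to formality — by exploiting the $A_\infty$-theoretic package set up so far, in particular Corollary \ref{cor:towards-hoch} together with the standard theory of obstructions to formality of $A_\infty$-algebras (the minimal model / Kadeishvili theory, and the fact that $A_\infty$-structures on a fixed cohomology ring are classified up to $A_\infty$-isomorphism by their ``Kontsevich–Soibelman'' descent data, with the first potentially nontrivial invariant living in $\mathsf{Hoch}^2$).

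First I would address (a). Corollary \ref{cor:towards-hoch} produces, for each choice of adapted harmonic subspace $\Hh^\ast$ (equivalently, each $\imath_0$-adapted splitting map $\imath$), an $A_3$-structure $(H^\ast(\Aa^\ast), \mu_2, \mu_3)$ that is $A_\infty$-quasi-isomorphic to $\Aa^\ast$. Two such choices give two $A_3$-structures on $H^\ast(\Aa^\ast)$ with the same $\mu_2$ (the honest cup product) that are $A_\infty$-quasi-isomorphic to each other, hence — since both have trivial differential and are minimal — $A_\infty$-\emph{isomorphic}. An $A_\infty$-isomorphism $f = (f_1, f_2, \ldots)$ between two minimal $A_\infty$-structures with $f_1 = \mathrm{id}$ (we may arrange this, as $f_1$ is a DGCA automorphism of $H^\ast(\Aa^\ast)$ fixing the product, and we are comparing the class in $\mathsf{Hoch}$ of $H^\ast(\Aa^\ast)$ with itself — alternatively, absorb $f_1$ into the statement) changes $\mu_3$ to $\mu_3 + [b, f_2]$ where $b$ is the Hochschild (bar) differential; this is exactly the statement that $\mu_3$ and $\mu_3'$ differ by a Hochschild coboundary. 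Concretely one reads this off the $A_\infty$-morphism relations in low arity: the arity-$3$ relation relating $f$, the source and target structures says $\mu_3' - \mu_3 = \partial_{\mathrm{Hoch}}(f_2)$ up to terms involving $f_1 = \mathrm{id}$ which cancel. Hence $[\mu_3] = [\mu_3']$ in $\mathsf{Hoch}^2(H^\ast(\Aa^\ast), H^\ast(\Aa^\ast))$, and since every Hodge-type decomposition with an adapted harmonic subspace is covered (Lemma \ref{lem:i0-adapted} guarantees such exist, and Remark \ref{rem:harm1} plus Definition \ref{def:adapted} show all adapted choices are comparable), the class $[\mu_3^{\Aa^\ast}]$ depends only on $\Aa^\ast$. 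I should also note the class is invariant under $A_\infty$-quasi-isomorphism of $\Aa^\ast$ itself, which is immediate since the transferred minimal $A_\infty$-structure is a quasi-isomorphism invariant.

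Next, (b): $\Aa^\ast$ is formal iff $[\mu_3^{\Aa^\ast}] = 0$. The direction ``formal $\Rightarrow [\mu_3]=0$'' is easy: if $\Aa^\ast$ is formal it is $A_\infty$-quasi-isomorphic to $(H^\ast(\Aa^\ast), \mu_2, 0, 0, \ldots)$, whose transferred $A_3$-structure has $\mu_3 = 0$, so by the well-definedness just proved $[\mu_3^{\Aa^\ast}] = 0$. For the converse, suppose $[\mu_3^{\Aa^\ast}] = 0$. Then there is a Hochschild $1$-cochain $h_2 \in \mathrm{Hom}^{-1}(H^{\ast\otimes 2}, H^\ast)$ with $\mu_3 = \partial_{\mathrm{Hoch}} h_2$; the $A_\infty$-isomorphism $(\mathrm{id}, h_2, 0, 0, \ldots)$ (which one checks is a valid $A_\infty$-morphism modulo the vanishing multiplications $m_k$, $k\geq 4$, of our $A_3$-algebra — here the degree bound $n \le 5r-3$ and Theorem \ref{thm:A2-algebra} are crucial: there are simply no higher multiplications to obstruct anything) carries $(\mu_2,\mu_3)$ to $(\mu_2, 0)$, i.e.\ to the formal $A_3$-structure. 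Since an $A_3$-algebra with $m_k=0$ for $k\geq 3$ is just a DGCA with zero differential, and since $\Aa^\ast$ is $A_\infty$-quasi-isomorphic to this $A_3$-algebra, we conclude $\Aa^\ast$ is $A_\infty$-quasi-isomorphic to $(H^\ast(\Aa^\ast), \mu_2)$, hence formal. (One uses here that $A_\infty$-quasi-isomorphism is an equivalence relation and that a DGCA quasi-isomorphic in the $A_\infty$-sense to its cohomology is formal; this is Kadeishvili's theorem / \cite{Valette2012, Kadeishvili2009}.)

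The main obstacle I anticipate is a clean, self-contained derivation of the transformation law $\mu_3 \mapsto \mu_3 + \partial_{\mathrm{Hoch}}(f_2)$ for $A_\infty$-isomorphisms of minimal $A_3$-algebras, and the dual claim that killing $[\mu_3]$ can be realized by an \emph{actual} $A_\infty$-isomorphism rather than just a Hochschild-cohomological manipulation. Both are standard (this is precisely the content of the obstruction-theoretic reading of the minimal model, cf.\ Kontsevich–Soibelman \cite{KS2001}, and is implicit in \cite{Lunts2007}), but writing it out requires being careful with the arity-$2$ and arity-$3$ $A_\infty$-morphism relations and with Koszul signs; the degree restriction $n\leq 5r-3$ is exactly what makes it go through without the usual infinite sequence of higher obstructions, because Theorem \ref{thm:A2-algebra} kills $m_k$ for $k\geq 4$ once and for all. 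I would isolate the two low-arity identities as a short lemma on $A_3$-algebras and invoke it at both steps.
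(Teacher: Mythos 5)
Your treatment of the well-definedness of $[\mu_3]$ is essentially the paper's own argument: compare the two transferred $A_3$-structures by an $A_\infty$-quasi-isomorphism $\phi_\infty$, note that its linear part is the identity, and read off $\tilde\mu_3=\mu_3+d_{\mathsf{Hoch}}\phi_2$ from the arity-$3$ morphism relation. One small caveat: your justification that one may take $f_1=\mathrm{id}$ (``$f_1$ is a DGCA automorphism fixing the product'') is not by itself sufficient, since a nontrivial automorphism $\phi$ of $H^\ast(\Aa^\ast)$ would only give $[\mu_3]=[\phi^{-1}\circ\tilde\mu_3\circ(\phi\otimes\phi\otimes\phi)]$, which is the statement of Theorem \ref{thm:detects}, not of Theorem \ref{thm:obstruction}. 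The correct reason, which the paper spells out, is that both quasi-isomorphisms $\imath_\infty,\tilde\imath_\infty$ into $\Aa^\ast$ induce the identity on cohomology, so the comparison map does too.

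The genuine gap is in your converse direction of part (b). You claim that if $\mu_3=\partial_{\mathrm{Hoch}}h_2$ then $(\mathrm{id},h_2,0,0,\dots)$ is an $A_\infty$-isomorphism from $(H^\ast,\mu_2,\mu_3)$ to $(H^\ast,\mu_2,0)$, ``modulo the vanishing multiplications $m_k$, $k\geq 4$.'' This is not correct: the Hochschild coboundary identity is exactly the arity-$3$ morphism relation, but the arity-$4$ relation for this putative morphism reads (up to Koszul signs)
\[
h_2(\mu_3\otimes\mathrm{id})\pm h_2(\mathrm{id}\otimes\mu_3)=\mu_2(h_2\otimes h_2),
\]
and this is a nontrivial constraint on $h_2$ that is not implied by $m_4=m_5=\dots=0$ in source and target. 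A degree count shows both sides land in degree $\geq 4r-2\geq n-r+1$, so they vanish except possibly in top degree $n$, where they need not cancel for an arbitrary trivializing cochain $h_2$; so even the dimension hypothesis does not automatically close this relation. In general, killing $[m_3]$ only produces an $A_\infty$-isomorphism to a minimal structure with $m_3=0$ but possibly new operations $m_4',m_5',\dots$, and formality requires killing an infinite sequence of obstructions. The paper sidesteps all of this by quoting the Kadeishvili--Kaledin result (\cite[Corollary 5.7]{Lunts2007}) that for an $A_3$-algebra with trivial differential the class $[\mu_3]$ is the \emph{only} obstruction to formality; if you want a self-contained proof you must either reproduce that argument (constructing the higher components $f_3,f_4,\dots$ inductively and showing the successive obstructions vanish for degree reasons) or cite it as the paper does. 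The ``formal $\Rightarrow[\mu_3]=0$'' direction of your part (b) is fine.
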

\begin{proof}
It follows by a general result by Kadeishvili \cite{Kadeishvili1988} and Kaledin \cite{Kaledin2005} that the only obstruction to the formality of an $A_3$-algebra with trivial differential is the Hochschild cohomology class of the trilinear multiplication, see \cite[Corollary 5.7]{Lunts2007}. Therefore, as  $(H^\ast(\Aa^\ast),0,\cdot, \mu_3)$ and $(\Aa^\ast,d,\cdot)$ are $A_\infty$-quasi-isomorphic, $[\mu_3]$ is the only obstruction to the formality of $\Aa^\ast$. To conclude, we need to show that the class $[\mu_3]$ is actually independent of the choice of adapted harmonic forms and of associated Hodge decomposition. To see this, let $\tilde{\iota}$ and $\tilde{d}^-$ be the operators occurring in a different choice, and let  $\tilde{\mu}_3$ be the associated the trilinear multiplication. Then we will have $A_\infty$-quasi-isomorphism
\[
\imath_\infty\colon (H^\ast(\Aa^\ast),0,\cdot, \mu_3) \to (\Aa^\ast,d,\cdot); \qquad\tilde{\imath}_\infty\colon (H^\ast(\Aa^\ast),0,\cdot, \tilde{\mu}_3) \to (\Aa^\ast,d,\cdot)
\]
extending the linear morphisms $\imath$ and $\tilde{\imath}$, respectively. By considering an $A_\infty$-up to homotopy inverse quasi-isomorphism $\tilde{\pi}_\infty \colon (\Aa^\ast,d,\cdot)\to (H^\ast(\Aa^\ast),0,\cdot, \tilde{\mu}_3)$ we therefore get a homotopy commutative triangle of $A_\infty$-quasiisomorphisms
\[
\xymatrix{
&(\Aa^\ast,d,\cdot)\\
 (H^\ast(\Aa^\ast),0,\cdot, \mu_3) \ar[ru]^{\imath_\infty}\ar[rr]^{\phi_\infty}&&(H^\ast(\Aa^\ast),0,\cdot, \tilde{\mu}_3)\ar[lu]_{\tilde{\imath}_\infty}
}
\]
for some $A_\infty$-quasi-isomorphism $\phi_\infty$. Passing to cohomology we get the commutative diagram
\[
\xymatrix{
&H^\ast(\Aa^\ast)\\
 H^\ast(\Aa^\ast) \ar[ru]^{H^\ast(\imath)=\mathrm{id}}\ar[rr]^{H^\ast(\phi_1)}&&H^\ast(\Aa^\ast)\ar[lu]_{H^\ast(\tilde{\imath})=\mathrm{id}}
},
\]
where $\phi_1$ is the linear component of $\phi_\infty$. But then $\phi_1=\mathrm{id}\colon H^\ast(\Aa^\ast)\to H^\ast(\Aa^\ast)$, and so 
\[
\phi_\infty\colon (H^\ast(\Aa^\ast),0,\cdot, \mu_3)\to (H^\ast(\Aa^\ast),0,\cdot, \tilde{\mu}_3)
\]
is an $A_\infty$-quasi-isomorphism whose linear component is the identity. Spelling out the trilinear component in the definition of $A_\infty$-morphism, i.e.,
\[
\sum_{r+s+t=3 \atop r,t\geq 0, s\geq 1}(-1)^{rs+t}\phi_{r+1+t}(\mathrm{id}^{\otimes r}\otimes \mu_s\otimes \mathrm{id}^{\otimes t})=\sum_{j\in\{1,2,3\} \atop  {i_1+\cdots+i_j=3\atop i_1,\dots,i_j\geq 1}}(-1)^{u}
\tilde{\mu}_j(\phi_{i_1}\otimes\cdots\otimes\phi_{i_j}),
\]
where $u=\sum_{k=1}^{j-1}(j-k)(i_k-1)$, 
and using the fact $\phi_1=\mathrm{id}$, and that on both sides the differentials $\mu_1$ and $\tilde{\mu}_1$ are zero and the binary multiplications $\mu_2$ and $\tilde{\mu}_2$ are the multiplication $m$ in $H^\ast(\Aa^\ast)$, one finds 
\[
\mu_3-\phi_{2}(m\otimes \mathrm{id})+\phi_{2}(\mathrm{id}\otimes m)
=
m(\mathrm{id}\otimes\phi_{2})-
m(\phi_{2}\otimes\mathrm{id})
+
\tilde{\mu}_3,
\]
i.e., precisely,
\[
\tilde{\mu}_3=\mu_3+d_{\mathsf{Hoch}}\phi_2,
\]
and so $[\tilde{\mu}_3]=[\mu_3]$. 
\end{proof}
The same argument used in the proof of Theorem \ref{thm:obstruction} shows that the Hochschild class $[\mu_3^{\Aa^\ast}]$ detects the $A_\infty$-quasi-isomorphism class of an highly connected  Poincar\'e DGCA $\Aa^\ast$. The precise statement is the following.
\begin{theorem}\label{thm:detects}
Let $\Aa^\ast$ a $(r-1)$-connected ($r>1)$ Poincar\'e DGCA  of Hodge type of degree $n$ with $n\leq 5r-3$. The $A_\infty$-quasi-isomorphism class of $\Aa^\ast$ is completely encoded into the DGCA $(H^\ast(\Aa^\ast),\cdot)$ and in the Hochschild class $[\mu_3^{\Aa^\ast}] \in \mathsf{Hoch}^2(H^\ast(\Aa^\ast),H^\ast(\Aa^\ast))$.
\end{theorem}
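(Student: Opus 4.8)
The plan is to mimic, almost verbatim, the argument already given in the proof of Theorem \ref{thm:obstruction}, but now applied to two arbitrary $(r-1)$-connected Poincar\'e DGCAs $\Aa^\ast$ and $\Bb^\ast$ of Hodge type with degree $n\leq 5r-3$, rather than to a single $\Aa^\ast$ with two different Hodge decompositions. First I would observe that by Corollary \ref{cor:towards-hoch} each of $\Aa^\ast$ and $\Bb^\ast$ is $A_\infty$-quasi-isomorphic to its cohomology equipped with the transferred $A_3$-structure, i.e.\ to $(H^\ast(\Aa^\ast),0,\cdot,\mu_3^{\Aa^\ast})$ and $(H^\ast(\Bb^\ast),0,\cdot,\mu_3^{\Bb^\ast})$ respectively, where the binary products are the genuine cup products on cohomology. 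Since $A_\infty$-quasi-isomorphism is an equivalence relation, $\Aa^\ast$ and $\Bb^\ast$ are $A_\infty$-quasi-isomorphic if and only if these two $A_3$-models are. Thus the theorem reduces to the purely algebraic statement: two $A_3$-algebras with zero differential, concentrated in the relevant degree range, are $A_\infty$-quasi-isomorphic if and only if there is an isomorphism of graded algebras $f\colon (H^\ast(\Aa^\ast),\cdot)\xrightarrow{\sim}(H^\ast(\Bb^\ast),\cdot)$ carrying $[\mu_3^{\Aa^\ast}]$ to $[\mu_3^{\Bb^\ast}]$ in $\mathsf{Hoch}^2$.

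For the ``if'' direction I would argue as follows. Given such an $f$, use it to transport the $A_3$-structure $\mu_3^{\Bb^\ast}$ to an $A_3$-structure $\mu_3'$ on $(H^\ast(\Aa^\ast),\cdot)$; then $f$ becomes a strict $A_3$-isomorphism, so it suffices to treat the case $H^\ast(\Aa^\ast)=H^\ast(\Bb^\ast)=H^\ast$ with the same binary product and two ternary products $\mu_3$, $\mu_3'$ with $[\mu_3]=[\mu_3']$ in $\mathsf{Hoch}^{3,-1}(H^\ast,H^\ast)$. Writing $\mu_3'=\mu_3+d_{\mathsf{Hoch}}\psi_2$ for some Hochschild $1$-cochain $\psi_2$ (a degree $-1$ bilinear map $H^\ast\otimes H^\ast\to H^\ast$), I would build an $A_\infty$-quasi-isomorphism $\phi_\infty\colon (H^\ast,0,\cdot,\mu_3)\to(H^\ast,0,\cdot,\mu_3')$ with $\phi_1=\mathrm{id}$ and $\phi_2=\psi_2$: the trilinear component of the $A_\infty$-morphism relation is exactly $\tilde\mu_3=\mu_3+d_{\mathsf{Hoch}}\phi_2$, which is satisfied by construction, and all higher $A_\infty$-relations are automatically satisfied for degree reasons, since $\mu_k=\mu_k'=0$ for $k\geq 4$ and the connectivity/degree bounds force the higher structure maps into degrees that vanish (the same degree counting already used in the proof of Theorem \ref{thm:A2-algebra} to kill $m_k$ for $k\geq4$). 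Hence $\phi_\infty$ is an $A_\infty$-quasi-isomorphism, and composing with $f$ gives the desired equivalence.

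For the ``only if'' direction I would run the argument in the proof of Theorem \ref{thm:obstruction} word for word with $\Bb^\ast$ in place of the ``second choice''. Suppose $\Aa^\ast$ and $\Bb^\ast$ are $A_\infty$-quasi-isomorphic. Composing the quasi-isomorphisms $\imath_\infty^{\Aa}\colon(H^\ast(\Aa^\ast),0,\cdot,\mu_3^{\Aa^\ast})\to\Aa^\ast$, the given equivalence $\Aa^\ast\simeq\Bb^\ast$, and an up-to-homotopy inverse $\pi_\infty^{\Bb}\colon\Bb^\ast\to(H^\ast(\Bb^\ast),0,\cdot,\mu_3^{\Bb^\ast})$, I obtain an $A_\infty$-quasi-isomorphism $\phi_\infty\colon(H^\ast(\Aa^\ast),0,\cdot,\mu_3^{\Aa^\ast})\to(H^\ast(\Bb^\ast),0,\cdot,\mu_3^{\Bb^\ast})$. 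Its linear component $\phi_1=f$ is then an isomorphism of graded vector spaces, and since $\mu_2$'s are the cup products and $\mu_1$'s vanish, the bilinear $A_\infty$-relation shows $f$ is a morphism of graded algebras, hence a DGCA isomorphism. Replacing $\mu_3^{\Bb^\ast}$ by $f^\ast\mu_3^{\Bb^\ast}$ via $f$, we may assume $f=\mathrm{id}$, and then the trilinear $A_\infty$-relation, spelled out exactly as in the displayed computation in the proof of Theorem \ref{thm:obstruction}, reads $\mu_3^{\Bb^\ast}=\mu_3^{\Aa^\ast}+d_{\mathsf{Hoch}}\phi_2$, so $f$ sends $[\mu_3^{\Aa^\ast}]$ to $[\mu_3^{\Bb^\ast}]$.

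I expect the main obstacle to be the bookkeeping in the ``if'' direction: one must check that the prescribed partial morphism $(\phi_1,\phi_2)=(\mathrm{id},\psi_2)$ genuinely extends to an $A_\infty$-morphism with all higher components zero, i.e.\ that every $A_\infty$-morphism relation beyond the trilinear one is satisfied. The point is that, by the same degree-and-connectivity estimates used to prove Theorem \ref{thm:A2-algebra} (namely that the relevant degrees land in the vanishing range $n-r+1\leq\ast\leq n-1$ or below $r$), all these relations have both sides identically zero, so no further components are needed; making this precise requires carefully tracking the degrees of the inputs and of $\mu_2,\mu_3,\phi_2$ through each $A_\infty$-relation, but it is the same counting that has already been carried out in the paper and involves no new idea. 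The rest is a routine transport-of-structure argument plus the homotopy-invariance of the class $[\mu_3]$ established in Theorem \ref{thm:obstruction}.
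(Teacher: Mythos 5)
Your proposal follows the paper's own proof of Theorem \ref{thm:detects} essentially verbatim: the ``only if'' direction extracts the trilinear component of an $A_\infty$-quasi-isomorphism between the two transferred $A_3$-models, exactly as in the paper's displayed identity, and the ``if'' direction builds the morphism $(\phi_1,\phi_2)=(\phi,\phi\circ\psi)$ from a Hochschild primitive $\psi$ of the difference of the two ternary products, which is precisely the paper's ``vice versa'' step. The only difference is cosmetic: you explicitly flag that the higher $A_\infty$-morphism relations for $(\phi_1,\phi_2)$ must be checked (and argue this by the same degree counting as in Theorem \ref{thm:A2-algebra}), a point the paper's proof passes over in silence.
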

\begin{proof}
We need to show that two $(r-1)$-connected Poincar\'e DGCA's of Hodge type of degree at most $5r-3$, $\Aa^\ast_1$ and $\Aa^\ast_2$  are $A_\infty$-quasi-isomorphic if and only if there exists a DGCA isomorphism $\phi\colon H^\ast(\Aa^\ast_1)\to H^\ast(\Aa^\ast_2)$ such that $[\phi^{-1}\circ(\mu_3^{\Aa^\ast_2}(\phi\otimes\phi\otimes\phi))]=[\mu_3^{\Aa^\ast_1}]$. In one direction, if $\phi_\infty\colon \Aa^\ast_1\to \Aa^\ast_2$ is an $A_\infty$-quasi-isomorphism, then the morphism $\phi\colon H^\ast(\Aa^\ast_1)\to H^\ast(\Aa^\ast_2)$ induced by the linear part $\phi_1$ of $\phi_\infty$ is an isomorphism of DGCA's (with trivial differential). The fact that $\phi_\infty$ is an $A_\infty$-morphism then gives, in particular, the identity
\[
\phi(\mu_3^{\Aa_1^\ast})={\mu}_3^{\Aa_2^\ast}(\phi\otimes\phi\otimes\phi)+\phi_{2}(\mu_2^{\Aa^\ast_1}\otimes \mathrm{id})-\phi_{2}(\mathrm{id}\otimes \mu_2^{\Aa^\ast_1})
+
\mu_2^{\Aa^\ast_2}(\phi\otimes\phi_{2})-
\mu_2^{\Aa^\ast_2}(\phi_{2}\otimes\phi),
\]
where $\mu_2^{\Aa^\ast_i}$ is the multiplication in $H^\ast(\Aa^\ast_i)$. By applying $\phi^{-1}$ on both sides and using that $\phi$ is a DGCA isomorphism, we find
\begin{align*}
\mu_3^{\Aa_1^\ast}=\phi^{-1}({\mu}_3^{\Aa_2^\ast}(\phi\otimes\phi\otimes\phi))&+(\phi^{-1}\circ \phi_{2})(\mu_2^{\Aa^\ast_1}\otimes \mathrm{id})-(\phi^{-1}\circ \phi_{2})(\mathrm{id}\otimes \mu_2^{\Aa^\ast_1})\\
&
+
\mu_2^{\Aa^\ast_1}(\mathrm{id}\otimes(\phi^{-1}\circ \phi_{2}))-
\mu_2^{\Aa^\ast_1}((\phi^{-1}\circ \phi_{2})\otimes\mathrm{id}),
\end{align*}
and so $[\mu_3^{\Aa^\ast_1}]=[\phi^{-1}\circ(\mu_3^{\Aa^\ast_2}(\phi\otimes\phi\otimes\phi))]$. Vice versa, assume we are given a DGCA isomorphism  $\phi\colon H^\ast(\Aa^\ast_1)\to H^\ast(\Aa^\ast_2)$ such that $[\phi^{-1}\circ(\mu_3^{\Aa^\ast_2}(\phi\otimes\phi\otimes\phi))]=[\mu_3^{\Aa^\ast_1}]$. Then, by definition of Hochschild cohomology, there exists a bilinear morphism
\[
\psi\colon H^\ast(\Aa_1^\ast)\otimes H^\ast(\Aa_1^\ast) \to H^{\ast-1}(\Aa_1^\ast)
\]
such that
\begin{align*}
\mu_3^{\Aa_1^\ast}=\phi^{-1}({\mu}_3^{\Aa_2^\ast}(\phi\otimes\phi\otimes\phi))&+\psi(\mu_2^{\Aa^\ast_1}\otimes \mathrm{id})-\psi(\mathrm{id}\otimes \mu_2^{\Aa^\ast_1})\\
&
+
\mu_2^{\Aa^\ast_1}(\mathrm{id}\otimes\psi)-
\mu_2^{\Aa^\ast_1}(\psi\otimes\mathrm{id}).
\end{align*}
Setting $\phi_1=\phi$ and $\phi_2=\phi\circ\psi$, the above identity precisely says that
\[
(\phi_1,\phi_2)\colon (H^\ast(\Aa_1^\ast),\mu_2^{\Aa^\ast_1},\mu_3^{\Aa^\ast_1})\to (H^\ast(\Aa_2^\ast),\mu_2^{\Aa^\ast_2},\mu_3^{\Aa^\ast_3}).
\]
is an $A_\infty$-isomorphism of $A_3$-algebras with trivial differential. As we have $A_\infty$-quasi-isomorphisms $\Aa_1^\ast\cong (H^\ast(\Aa_1^\ast),\mu_2^{\Aa^\ast_1},\mu_3^{\Aa^\ast_1})$ and $\Aa_2^\ast\cong (H^\ast(\Aa_2^\ast),\mu_2^{\Aa^\ast_2},\mu_3^{\Aa^\ast_2})$, this concludes the proof. 
\end{proof}

\begin{remark}[Massey products]\label{rem:Massey}
The ternary multiplication $\mu_3([\alpha],[\beta],[\gamma])$ is particularly interesting when both the products $[\alpha]\cdot[\beta]$ and $[\beta]\cdot[\gamma]$ vanish. Indeed, in this case we have
\[
\begin{cases}
\iota[\alpha]\cdot\iota[\beta]=d\eta\\
\iota[\alpha]\cdot\iota[\beta]=d\theta
\end{cases}
\]
for suitable elements $\eta,\theta\in d^-\Aa^\ast$ and so
\begin{align*}
\mu_3([\alpha],[\beta],[\gamma])&=[\pi_{\Hh^\ast}(\eta\cdot\iota[\gamma]-(-1)^{\deg \alpha} \iota[\alpha]\cdot \theta)]\\
&=[\eta\cdot\iota[\gamma]-(-1)^{\deg \alpha} \iota[\alpha]\cdot \theta],
\end{align*}
as $\eta\cdot\iota[\gamma]-(-1)^{\deg \alpha} \iota[\alpha]\cdot \theta$ is $d$-closed. This equation precisely states that $\mu_3([\alpha],[\beta],[\gamma])$ is a representative for the triple Massey product of $[\alpha]$, $[\beta]$ and $[\gamma]$. In particular, we see that if $\mu_3$ vanishes, then all triple Massey products vanish. One should however be warned that the converse is not true: there are examples of non-formal algebras with vanishing Massey products. See \cite{BMFM2018, Valette2012} for a detailed description of the relation between the $A_\infty$-algebra structure on $H^\ast(\Aa^\ast)$ obtained by homotopy transfer from the DGCA structure on a DGCA $\Aa^\ast$ and the (triple and higher) Massey products on $H^\ast(\Aa^\ast)$.
\end{remark}

\begin{remark}
Although an $A_3$-algebra can generally have nontrivial quadruple Massey products (see \cite[Theorem D]{BMFM2018}), simple degree considerations show that if $\Aa^\ast$ is a $(r-1)$-connected $(r>1)$ Poincar\'e DGCA of degree $n\leq 5r-3$ then all of the $l$ th order Massey products are trivial for $l \geq 4$. We show this in Appendix \ref{appendix.massey}.
\end{remark}

\begin{lemma}
The degree -1 trilinear map $\mu_3\colon H^{k_1}(\Aa^\ast)\otimes H^{k_2}(\Aa^\ast) \otimes H^{k_3}(\Aa^\ast)\to H^{k_1+k_2+k_3-1}(\Aa^\ast)$  from Corollary \ref{cor:towards-hoch} is a 2-cocycle in the Harrison subcomplex $\mathsf{Harr}^*(H^\ast(\Aa^\ast),H^\ast(\Aa^\ast))\subseteq \mathsf{Hoch}^*(H^\ast(\Aa^\ast),H^\ast(\Aa^\ast))$. In particular it defines an element in $\mathsf{Harr}^{3,-1}(H^\ast(\Aa^\ast),H^\ast(\Aa^\ast))\subseteq \mathsf{Harr}^2(H^\ast(\Aa^\ast),H^\ast(\Aa^\ast))$.
\end{lemma}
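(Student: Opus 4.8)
The plan is to show that $\mu_3$, viewed as an element of the Hochschild cochain group $\mathsf{Hoch}^{3,-1}(H^\ast(\Aa^\ast),H^\ast(\Aa^\ast))$, actually lies in the Harrison subcomplex, i.e.\ that it vanishes on the image of the shuffle map. Recall that the Harrison complex is the subcomplex of the Hochschild complex consisting of those cochains that annihilate nontrivial shuffles; for a $3$-cochain $f$ the only relevant shuffle relation is
\[
f(\alpha,\beta,\gamma) + f(\beta,\alpha,\gamma) + f(\beta,\gamma,\alpha) = 0
\]
for all homogeneous $\alpha,\beta,\gamma \in H^\ast(\Aa^\ast)$ (with appropriate Koszul signs coming from the grading on $H^\ast$). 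So the goal reduces to verifying this single (signed) cyclic-type shuffle identity for $\mu_3$.

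First I would use the explicit formula for $m_3$ on the harmonic subspace $\Hh^\ast$ from Remark \ref{rem:expilicy-m3}, namely
\[
m_3(a,b,c) = \pi_{\Hh^\ast}\bigl(d^-(a\cdot b)\cdot c - (-1)^{\deg a}\, a\cdot d^-(b\cdot c)\bigr),
\]
together with the fact that $\mu_3([\alpha],[\beta],[\gamma]) = [m_3(\imath[\alpha],\imath[\beta],\imath[\gamma])]$. Writing $a=\imath[\alpha]$, etc., I would form the shuffle sum $m_3(a,b,c) \pm m_3(b,a,c) \pm m_3(b,c,a)$ with the correct signs, expand each term via the formula above, and check that everything cancels. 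The key input making this work is that $\mu$ is graded-commutative and (crucially) that $d^-$ is an \emph{odd derivation up to the operators already present} — more precisely, the relevant cancellations are exactly the ones that express the statement "the binary product on $\Hh^\ast$ together with $d^-$ assembles into a commutative (not merely associative) structure," which is why the Hochschild cocycle $\mu_3$ refines to a Harrison cocycle. I expect the cleanest route is to note that the transferred structure on $\Hh^\ast$ is a $C_\infty$-algebra (commutative $A_\infty$-algebra) rather than just an $A_\infty$-algebra: homotopy transfer along a contraction of a \emph{commutative} DGA through a side preserved by the multiplication yields a $C_\infty$-structure, by the standard result of Cheng--Getzler \cite{GetzlerCheng} (or the $C_\infty$ transfer theorem). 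Since the Harrison complex computes precisely the obstruction/deformation theory of commutative structures, the ternary operation of a $C_\infty$-algebra is automatically a Harrison cochain; hence $\mu_3$ is a Harrison cocycle.

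Concretely, the steps in order: (1) recall that $(\Om^\ast,d)$ and a fortiori $\Qq^\ast_{\mathrm{small}}$ are graded-commutative, and that the contraction data $(\imath,\pi_{\Hh^\ast},d^-)$ used in the homotopy transfer satisfies the side conditions $\pi\imath = \mathrm{id}$, $\imath\pi = \mathrm{id} - [d,d^-]$, $(d^-)^2 = 0$, $d^-\imath = 0$, $\pi d^- = 0$, $(d^-)^2 = 0$; (2) invoke the $C_\infty$ homotopy transfer theorem to conclude that the transferred $A_3$-structure $(m_2,m_3)$ on $\Hh^\ast$ is in fact a $C_3$-structure, meaning $m_2$ is graded-commutative (already known) and $m_3$ vanishes on shuffles; (3) transfer this along $\imath$ to conclude $\mu_3$ vanishes on shuffles in $H^\ast(\Aa^\ast)$; (4) since $\mu_3$ is already a Hochschild $2$-cocycle by the preceding Lemma and it annihilates shuffles, it is by definition a Harrison $2$-cocycle, and therefore defines a class in $\mathsf{Harr}^{3,-1}(H^\ast(\Aa^\ast),H^\ast(\Aa^\ast)) \subseteq \mathsf{Harr}^2(H^\ast(\Aa^\ast),H^\ast(\Aa^\ast))$. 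The main obstacle is a bookkeeping one: either one must carefully chase Koszul signs through the explicit shuffle expansion of the $m_3$ formula (which is elementary but error-prone), or one must be precise about citing the $C_\infty$-transfer theorem and the identification of the Harrison complex with the relevant deformation complex. I would opt for the second, citing Cheng--Getzler and Barr \cite{Barr1968}, and relegate the explicit sign check to a one-line remark or omit it. If a self-contained computation is preferred, the hard part is verifying the signed identity $\sum_{\text{shuffles}} \pm\, m_3 = 0$ directly from \eqref{eq:mu3}, which hinges on the graded-commutativity of $\mu$ and the derivation-like behavior of $d^-$ relative to $\mu$ modulo the image of $\pi_{\Hh^\ast}$.
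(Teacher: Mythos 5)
Your proposal is correct, and your preferred route differs from the one the paper actually takes. The paper proves the lemma by brute force: it writes out the $(1,2)$-shuffle relation
\[
\mu_3([\alpha],[\beta],[\gamma])-(-1)^{\overline{[\alpha]}\,\overline{[\beta]}}\mu_3([\beta],[\alpha],[\gamma])+(-1)^{\overline{[\alpha]}(\overline{[\beta]}+\overline{[\gamma]})}\mu_3([\beta],[\gamma],[\alpha])=0,
\]
expands each term via the explicit Merkulov formula $m_3(a,b,c)=\pi_{\Hh^\ast}\bigl(d^-(a\cdot b)\cdot c-(-1)^{\deg a}a\cdot d^-(b\cdot c)\bigr)$, and observes that the six resulting summands cancel in pairs using only the graded commutativity of the product in $\Aa^\ast$ — this is exactly your "route (a)," which you correctly flag as elementary but sign-sensitive. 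Your "route (b)," invoking the $C_\infty$ homotopy transfer theorem of Cheng--Getzler to conclude that the transferred structure on $\Hh^\ast$ is a $C_3$-algebra and hence that $m_3$ kills shuffles automatically, is a legitimate and arguably more conceptual alternative: it explains \emph{why} the cancellation happens (commutative input plus a contraction satisfying the side conditions $\pi\imath=\mathrm{id}$, $d^-\imath=0$, $\pi d^-=0$, $(d^-)^2=0$, all of which hold here) and generalizes immediately to the higher $m_k$ and to the full $Comm_\infty$ statement the paper only reaches later in Corollary \ref{main-corollary}. What it costs is an external citation not present in the paper's bibliography and some care in matching sign conventions between the $C_\infty$-operad literature and the Harrison complex as normalized in \cite{Harrison} and \cite{Barr1968}; the paper's self-contained two-line cancellation avoids both. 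Either way, your final step — a shuffle-annihilating Hochschild cocycle is a Harrison cocycle since the Harrison differential is the restriction of the Hochschild one — matches the paper's logic.
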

\begin{proof}
By definition of the Harrison complex \cite{Harrison}, we have to show that
\[
\mu_3([\alpha],[\beta],[\gamma])-(-1)^{\overline{[\alpha]}\cdot\overline{[\beta]}}\mu_3([\beta],[\alpha],[\gamma])+\mu_3([\beta],[\gamma],[\alpha])=0
\]
for any $[\alpha],[\beta],[\gamma]$ in $H^\ast(\Aa^\ast)$, where we have written $\overline{[x]}$ for the degree of the homogeneous element $[x]\in H^\ast(\Aa^\ast)$. We compute
\begin{align*}
\mu_3&([\alpha],[\beta],[\gamma])-(-1)^{\overline{[\alpha]}\cdot\overline{[\beta]}}\mu_3([\beta],[\alpha],[\gamma])+(-1)^{\overline{[\alpha]}\cdot(\overline{[\beta]}+\overline{[\gamma]})}\mu_3([\beta],[\gamma],[\alpha])=\\
&=
[m_3(\imath[\alpha],\imath[\beta],\imath[\gamma])]-(-1)^{\overline{[\alpha]}\cdot\overline{[\beta]}}[m_3(\imath[\beta],\imath[\alpha],\imath[\gamma])]+(-1)^{\overline{[\alpha]}\cdot(\overline{[\beta]}+\overline{[\gamma]})}[m_3(\imath[\beta],\imath[\gamma],\imath[\alpha])]\\
&=\pi_{\Hh^\ast}(d^-(\imath[\alpha]\cdot \imath[\beta])\cdot\imath[\gamma]-(-1)^{\overline{ [\alpha]}} \imath[\alpha]\cdot d^-(\imath[\beta]\cdot\imath[\gamma]))\\
&\qquad +\pi_{\Hh^\ast}(-(-1)^{\overline{[\alpha]}\cdot\overline{[\beta]}}d^-(\imath[\beta]\cdot \imath[\alpha])\cdot\imath[\gamma]+(-1)^{(\overline{[\alpha]}+1)\overline{ [\beta]}} \imath[\beta]\cdot d^-(\imath[\alpha]\cdot\imath[\gamma]))\\
&\qquad +\pi_{\Hh^\ast}((-1)^{\overline{[\alpha]}\cdot(\overline{[\beta]}+\overline{[\gamma]})}d^-(\imath[\beta]\cdot \imath[\gamma])\cdot\imath[\alpha]-(-1)^{(\overline{[\alpha]}+1)\overline{ [\beta]}+\overline{[\alpha]}\cdot\overline{[\gamma]}} \imath[\beta]\cdot d^-(\imath[\gamma]\cdot\imath[\alpha]))\\
&=0,
\end{align*}
as the multiplication in $\Aa^\ast$ is graded commutative.
\end{proof}

Since in characteristic zero the Harrison cohomology of a DGCA injects into the Hochschild cohomology \cite{Barr1968}, we have this immediate corollary of Theorems \ref{thm:obstruction} and \ref{thm:detects}.

\begin{corollary}\label{main-corollary}
Let $\Aa^\ast$ a $(r-1)$-connected ($r>1)$ Poincar\'e DGCA of degree $n$ of Hodge type  with $n\leq 5r-3$. The $Comm_\infty$-quasi-isomorphism class of $\Aa^\ast$ is completely encoded into the DGCA $(H^\ast(\Aa^\ast),\cdot)$ and in the Harrison cohomology class $[\mu_3^{\Aa^\ast}] \in \mathsf{Harr}^{2}(H^\ast(\Aa^\ast),H^\ast(\Aa^\ast))$. In particular, the Harrison class $[\mu_3^{\Aa^\ast}]$ is the only obstruction to the formality of $\Aa^\ast$. 
\end{corollary}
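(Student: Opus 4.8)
The plan is to deduce Corollary \ref{main-corollary} directly from Theorems \ref{thm:obstruction} and \ref{thm:detects} together with the preceding lemma that $\mu_3$ is a Harrison cocycle, and the classical fact \cite{Barr1968} that in characteristic zero the natural map $\mathsf{Harr}^\ast \to \mathsf{Hoch}^\ast$ is injective. First I would observe that the passage from $A_\infty$ to $Comm_\infty$ (equivalently $C_\infty$) is exactly the passage from the Hochschild (associative) deformation theory of $H^\ast(\Aa^\ast)$ to its Harrison (commutative) deformation theory: a $C_\infty$-structure on a graded commutative algebra is an $A_\infty$-structure whose structure maps $m_k$ vanish on the image of the shuffle products, and the obstruction/classification theory of such structures is controlled by Harrison cohomology in exactly the way that $A_\infty$-structures are controlled by Hochschild cohomology. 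So the three statements to assemble are: (i) the $A_3$-algebra $(H^\ast(\Aa^\ast),0,\cdot,\mu_3)$ produced by Theorem \ref{thm:A2-algebra}/Corollary \ref{cor:towards-hoch} is in fact a $C_3$-algebra, because $\mu_3$ is a Harrison cochain by the lemma just proved; (ii) the homotopy transfer and the comparison arguments of Theorems \ref{thm:obstruction} and \ref{thm:detects} can be run in the commutative world, so that the relevant quasi-isomorphisms are $C_\infty$-quasi-isomorphisms and the coboundaries $\phi_2$, $\psi$ appearing there are Harrison (i.e. shuffle-vanishing) cochains; (iii) the Barr injectivity then shows that $[\mu_3^{\Aa^\ast}]$ as a Harrison class carries no less information than as a Hochschild class.

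Concretely, the key steps in order are as follows. Step 1: recall that since $\Aa^\ast$ is an honest (graded commutative) DGCA, the homotopy transfer theorem in the commutative setting — e.g. using the $C_\infty$ version of the Kontsevich--Soibelman tree formula, or equivalently the minimal model of the commutative operad — produces on $\Hh^\ast$ (hence on $H^\ast(\Aa^\ast)$) a $C_\infty$-structure, not merely an $A_\infty$-structure; by Theorem \ref{thm:A2-algebra} its only nonzero operations are $m_2$ and $m_3$, so it is a $C_3$-algebra, and the $C_\infty$-quasi-isomorphism to $\Aa^\ast$ refines the $A_\infty$-one. Step 2: re-examine the proof of Theorem \ref{thm:obstruction}: the triangle of quasi-isomorphisms and the resulting $\phi_\infty$ with $\phi_1=\mathrm{id}$ are now $C_\infty$-morphisms, so $\phi_2$ is a commutative (Harrison) $2$-cochain, and the identity $\tilde\mu_3 = \mu_3 + d_{\mathsf{Hoch}}\phi_2$ refines to $\tilde\mu_3 = \mu_3 + d_{\mathsf{Harr}}\phi_2$; hence $[\mu_3^{\Aa^\ast}]$ is well-defined in $\mathsf{Harr}^2$. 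Step 3: likewise re-run Theorem \ref{thm:detects} in the $C_\infty$ world: a DGCA isomorphism $\phi$ of cohomology rings with $[\phi^{-1}\circ(\mu_3^{\Aa_2^\ast}(\phi^{\otimes 3}))]=[\mu_3^{\Aa_1^\ast}]$ in $\mathsf{Harr}^2$ yields, via the Harrison-cochain $\psi$ trivializing the difference, a $C_\infty$-isomorphism of the $C_3$-models, hence a $C_\infty$-quasi-isomorphism $\Aa_1^\ast \simeq \Aa_2^\ast$; conversely a $C_\infty$-quasi-isomorphism produces such a $\phi$. Step 4: for the obstruction-to-formality statement, invoke the commutative analogue of the Kadeishvili--Kaledin result used in Theorem \ref{thm:obstruction} (i.e. the obstruction to formality of a $C_3$-algebra with trivial differential is the Harrison class of $m_3$), or simply deduce it from Theorem \ref{thm:obstruction} plus Barr injectivity: $[\mu_3^{\Aa^\ast}]=0$ in $\mathsf{Harr}^2$ iff it is $0$ in $\mathsf{Hoch}^2$ iff $\Aa^\ast$ is formal.

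The main obstacle I expect is Step 1 — making precise that homotopy transfer from a strictly commutative DGCA lands in $C_\infty$-algebras and that the induced quasi-isomorphism is a $C_\infty$-morphism, together with checking that the explicit $m_3$ of Remark \ref{rem:expilicy-m3} is genuinely a shuffle-vanishing (Harrison) cochain, which is precisely the content of the lemma preceding this corollary. One should cite the standard references for $C_\infty$-minimal models and transfer (Kadeishvili, Cheng--Getzler, or the operadic treatment in Loday--Vallette) rather than reprove them. Everything downstream of Step 1 is then a verbatim repetition of the arguments in Theorems \ref{thm:obstruction} and \ref{thm:detects} with ``Hochschild'' replaced by ``Harrison'' throughout, using only that the Barr map $\mathsf{Harr}^\ast \hookrightarrow \mathsf{Hoch}^\ast$ is injective in characteristic zero to transport the known Hochschild-level statements to the Harrison level; in particular no new hard computation is required, and the corollary follows.
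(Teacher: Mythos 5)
Your proposal is correct and follows essentially the same route as the paper, whose proof of this corollary is a one-liner: combine the lemma that $\mu_3$ is a Harrison cocycle, Barr's injectivity of $\mathsf{Harr}^\ast\hookrightarrow\mathsf{Hoch}^\ast$ in characteristic zero, and Theorems \ref{thm:obstruction} and \ref{thm:detects}. If anything, your Steps 1--3 (running the homotopy transfer and the comparison arguments in the $C_\infty$ world so that $\phi_2$ and $\psi$ are genuinely Harrison cochains) make explicit a point the paper leaves implicit, namely why matching Harrison classes yield a $Comm_\infty$- and not merely an $A_\infty$-quasi-isomorphism.
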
 

\begin{remark}[The Bianchi-Massey tensor]\label{rem:compareCN}
Via the spectral sequence relating Harrison to Andr\'e-Quillen cohomology  \cite{Barr1968, Loday1992}, the Harrison cohomology class $[\mu_3^{\Aa^\ast}]\in \mathsf{Harr}^{3,-1}(H^\ast(\Aa^\ast),H^\ast(\Aa^\ast))$ is equally represented by an element $\beta^{\Aa^\ast}\in \mathrm{Hom}^{-1}(S^2(SH^\ast(\Aa^\ast)),H^\ast(\Aa^\ast))$. This latter element is the Bianchi-Massey tensor introduced in \cite{CN} and therein identified as the only obstruction to the formality of a $r$-connected ($r>1)$ Poincar\'e DGCA $\Aa^\ast$ of degree $n$ with $n\leq 5r-3$, as well as a complete invariant of the quasi-isomorphism class of such an $\Aa^\ast$, together with the cohomology algebra $H^\ast(\Aa^\ast)$. In other words, Corollary 
\ref{main-corollary} precisely reproduces the main results from \cite{CN}, and could be obtained from them by translating the Bianchi-Massey tensor into the corresponding Harrison cohomology class. It should however be remarked that our derivation of Corollary 
\ref{main-corollary} is independent of \cite{CN} and can therefore be read as a confirmation of their results. 
\end{remark}

From a computational point of view, it seems that the vanishing of the Bianchi-Massey tensor is easier to be checked than the vanishing of the corresponding Harrison cohomology class, so that the  \cite{CN} approach appears to be more suitable in investigating formality of a given highly connected manifold. The approach via homotopy transfer and Hochschild/Harrison cohomology, on the other hand, appears to be more fitting for the investigation of Massey products. We give an example in Corollaries \ref{cor:Massey-b1}-\ref{cor:Massey-b2} below.

\begin{corollary} \label{cor:Massey-b1}
Let $\Aa^\ast$ be a simply connected Poincar\'e DGCA of degree $n$ 
of Hodge type. 
	\begin{enumerate}
		\item
		If $n \leq 6$, then the Massey product of any Massey triple\footnote{The definition of a Massey $l$-ple in a DGCA $\Aa^\ast$ is recalled in Definition \ref{def:Massey tuple}.} vanishes. 
		\item
		If $n = 7$, then the Massey product vanishes for every Massey triple of degree $\neq (2,2,2)$.
		\item
		If $n = 8$, then the Massey product vanishes for every Massey triple of degree $\notin \{(2,2,2), (2,2,3), (2,3,2), (3,2,2)\}$.
	\end{enumerate}
\end{corollary}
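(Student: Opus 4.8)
The plan is to reduce the statement to the vanishing of $\mu_3$ in degrees that are forced to vanish by connectivity and Poincaré duality, and then invoke Remark \ref{rem:Massey}, which says that $\mu_3([\alpha],[\beta],[\gamma])$ is a representative of the triple Massey product whenever $[\alpha]\cdot[\beta]=[\beta]\cdot[\gamma]=0$. So it suffices to show that for a simply connected ($r=2$) Poincaré DGCA of Hodge type of degree $n\leq 8$, the trilinear multiplication $\mu_3\colon H^{k_1}\otimes H^{k_2}\otimes H^{k_3}\to H^{k_1+k_2+k_3-1}$ is zero except in the listed degrees. First I would apply Corollary \ref{cor:towards-hoch} (which is available since $n\leq 8 = 5r-3$ for $r=2$), giving the $A_3$-structure on $H^\ast(\Aa^\ast)$ with $\mu_3$ having degree $-1$, i.e. landing in $H^{k_1+k_2+k_3-1}$. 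Since $\Aa^\ast$ is simply connected we have $H^1=0$, and by Remark \ref{rem:1-gives-0} we may assume each $k_i\geq 2$, so $k_1+k_2+k_3\geq 6$ and the target degree $k_1+k_2+k_3-1\geq 5$.

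The core of the argument is then a purely numerical case analysis using the explicit form of $\mu_3$ from Remark \ref{rem:expilicy-m3}, $m_3(\alpha,\beta,\gamma)=\pi_{\Hh^\ast}(d^-(\alpha\cdot\beta)\cdot\gamma-(-1)^{\deg\alpha}\alpha\cdot d^-(\beta\cdot\gamma))$, together with the vanishing ranges for $d^-$ established in the proof of Theorem \ref{thm:A2-algebra}. Recall that for $r=2$, equation (\ref{eq:recursion5}) with the quotient $\Qq^\ast_{\mathrm{small}}$ shows $d^-$ vanishes on $\Qq^k_{\mathrm{small}}$ for $k\leq 2r-1=3$ and for $k\geq n-2r+2=n-2$. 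So for $d^-(\alpha\cdot\beta)$ to be nonzero we need $k_1+k_2\geq 4$ (automatic) \emph{and} $k_1+k_2\leq n-3$; likewise $d^-(\beta\cdot\gamma)$ requires $k_2+k_3\leq n-3$. For $m_3$ to be nonzero we moreover need the output degree $k_1+k_2+k_3-1$ to be an admissible cohomology degree, i.e. $\leq n$, forcing $k_1+k_2+k_3\leq n+1$; combined with each $k_i\geq 2$ this already pins things down heavily. For $n=6$: $k_1+k_2+k_3\leq 7$ with each $k_i\geq 2$ gives only $(2,2,2)$ with output degree $5$; but then $k_1+k_2=4>n-3=3$, so $d^-$ vanishes on the degree-$4$ piece and both terms of $m_3$ vanish — hence (1). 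For $n=7$: $k_1+k_2+k_3\leq 8$, each $k_i\geq 2$, leaves $(2,2,2)$ (output $5$) and the permutations of $(2,2,3)$ (output $6$); for the latter one checks $d^-$ must act on a piece of degree $\geq n-1=6> n-3=4$ hence vanishes, and also the target degree $6$ forces the output into $\Hh^6=H^6$ which pairs with $H^1=0$ — either way the permutations of $(2,2,3)$ contribute zero, leaving only $(2,2,2)$, which is (2). For $n=8$: $k_1+k_2+k_3\leq 9$, each $k_i\geq 2$, gives $(2,2,2)$ (output $5$), permutations of $(2,2,3)$ (output $6$), and $(2,2,5),(2,5,2),(5,2,2)$, $(2,3,3)$-type (output $7$), but the larger ones run into $d^-$-vanishing ($k_i+k_j\geq n-2$) or output-degree $7=n-1$ pairing against $H^1=0$; the surviving degrees are exactly $(2,2,2)$ and the permutations of $(2,2,3)$, giving (3).

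A cleaner way to organize the same bookkeeping, which I would actually write out: by Poincaré duality $m_3(\alpha,\beta,\gamma)$ is dual to the tensor $\tau(\alpha,\beta,\gamma,\delta)=\langle m_3(\alpha,\beta,\gamma),\delta\rangle$ with $\deg\delta = n+1-(k_1+k_2+k_3)$, and $\tau$ is nonzero only if all four degrees are $\geq 2$ (using simply-connectedness and Remark \ref{rem:1-gives-0} applied symmetrically, noting $m_3$'s cyclic symmetry from the Harrison-cocycle Lemma) — so $k_1+k_2+k_3+\deg\delta = n+1$ with all four summands $\geq 2$, forcing $n+1\geq 8$, i.e. $n\geq 7$, and for $n=7$ the only solution is $(2,2,2;2)$, for $n=8$ the solutions are $(2,2,2;3)$ and its permutations together with $(2,2,3;2)$ and permutations — which is exactly the claimed list; for $n=6$ there is no solution at all, giving (1). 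Then the translation back from "$\mu_3$ vanishes" to "the Massey product vanishes" is precisely Remark \ref{rem:Massey}. The main obstacle is making sure the degree of $\delta$, i.e. the Poincaré-dual slot, is genuinely constrained to be $\geq 2$: this needs the observation that $m_3$ (as a Harrison/Hochschild cocycle, by the Lemma just before Corollary \ref{main-corollary}) has enough symmetry that Remark \ref{rem:1-gives-0} rules out the fourth argument having degree $0$ as well, and that $H^1=0$ rules out degree $1$ everywhere — once that is cleanly stated, the rest is the elementary integer-partition count above.
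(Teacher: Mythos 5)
Your overall route is the paper's: reduce to the vanishing of $\mu_3$ in forbidden degrees via Remark \ref{rem:Massey}, use the explicit formula for $m_3$ from Remark \ref{rem:expilicy-m3} together with the range on which $d^-$ can be nonzero (read off from (\ref{eq:recursion5})), discard degree $0$ and $1$ inputs via Remark \ref{rem:1-gives-0} and simple connectivity, and finish by degree bookkeeping. There is, however, a genuine gap at the one point where the bookkeeping does not suffice: triples whose output lands in the top degree $H^n$. For $n=7$ these are the degrees $(2,2,4)$ and $(4,2,2)$, which your first-paragraph enumeration simply omits (note $2+2+4=8\le n+1$); for $n=8$ there are several, e.g.\ $(2,2,5)$, $(5,2,2)$, $(2,3,4)$, $(4,3,2)$. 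In each of these cases one of the two terms of $m_3$ passes your $d^-$-range test, and the output degree is $n$ where $H^n\cong H^0\neq 0$, so neither of your two vanishing mechanisms applies. Your second, ``cleaner'' argument tries to dispose of exactly these cases by asserting that $\tau(\alpha,\beta,\gamma,\delta)=\langle m_3(\alpha,\beta,\gamma),\delta\rangle$ vanishes when $\deg\delta=0$ ``by symmetry'' coming from the Harrison-cocycle lemma. That lemma only gives a shuffle relation among permutations of the three arguments of $\mu_3$; it never involves the fourth slot $\delta$, so it cannot be used to move $\delta$ into an argument position of $m_3$ and then invoke Remark \ref{rem:1-gives-0}.

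The missing step is supplied in the paper by a direct computation: for such a triple,
\[
\int m_3(\alpha,\beta,\gamma)=\langle d^-(\alpha\cdot\beta),\gamma\rangle-(-1)^{\deg\alpha}\langle \alpha, d^-(\beta\cdot\gamma)\rangle=0,
\]
because elements of $\Bb^\ast=d^-\Aa^{\ast+1}$ pair to zero with harmonic elements by the orthogonality relation (\ref{def:Hodge-DGA2}); since $\int\colon H^n\to\F$ is an isomorphism, this forces $m_3=0$ in top degree. (Alternatively, one can make your ``rotate $\delta$ into the first slot'' idea rigorous by proving the cyclicity $\langle m_3(a,b,c),d\rangle=\pm\langle m_3(b,c,d),a\rangle$ directly from the formula for $m_3$ and the graded self-adjointness of $d^-$ with respect to $\langle-,-\rangle$ — but this again rests on (\ref{def:Hodge-DGA2})--(\ref{eq:Hodge-DGA3}), not on the Harrison cocycle identity.) Once this top-degree case is added, your case analysis closes and yields exactly the claimed lists.
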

\begin{proof}
The statement is trivial if $n\leq 6$ as in this case $\Aa^\ast$ is formal by Corollary \ref{cor:formal-lowdim}. When the degree of $\Aa^\ast$ is 7, we show that $\mu_3([\alpha],[\beta],[\gamma])$ is possibly nonzero only when $\deg([\alpha],[\beta],[\gamma])=(2,2,2)$. The conclusion then follows by Remark \ref{rem:Massey}. By Corollary \ref{cor:towards-hoch}, we are reduced to show that $\mu_3(\alpha,\beta,\gamma)=0$ for any three harmonic elements $\alpha,\beta,\gamma$ with $\deg(\alpha,\beta,\gamma)\neq(2,2,2)$. By (\ref{eq:recursion5}) we see that for a degree-7 algebra $\Aa^\ast$ the operator $d^-\colon\Qq^{k}_{\mathrm{small}} \to \Qq^{k-1}_{\mathrm{small}}$ is identically zero unless $k=4$.  From the explicit formula for $m_3$ given in Remark \ref{rem:expilicy-m3}, we therefore see that $\mu_3(\alpha,\beta,\gamma)=0$ unless $\deg\alpha+\deg\beta=4$ or $\deg\beta+\deg\gamma=4$. As $\Aa^\ast$ is simply connected, and by Remark \ref{rem:1-gives-0}, $\mu_3(\alpha,\beta,\gamma)$ is zero if one of the elements $\alpha,\beta,\gamma$ has degree 0 or 1. So the only possible nonvanishing triple products come form triples with degrees $(2,2,k)$ or $(k,2,2)$. If $(\alpha,\beta,\gamma)$ is such a triple, then $\mu_3(\alpha,\beta,\gamma)\in \Hh^{k+3}$. So having a possibly nonzero result restricts to the two possibilities $k=2$ or $k=4$. To rule out the $k=4$ possibility, notice that if $\deg(\alpha,\beta,\gamma)=(2,2,4)$, then 
\[
\int m_3(\alpha,\beta,\gamma)=\int \pi_{\Hh^\ast}(d^-(\alpha\cdot \beta)\cdot\gamma) = \int d^-(\alpha\cdot \beta)\cdot\gamma=\langle d^-(\alpha\cdot \beta),\gamma\rangle= 0,
\]
by the orthogonality relation (\ref{def:Hodge-DGA2}). As $\Aa^\ast$ is connected, $\int\colon \Hh^7\to \mathbb{R}$ is an isomorphism and so $m_3(\alpha,\beta,\gamma)=0$. The same argument shows that $m_3(\alpha,\beta,\gamma)=0$ if $\deg(\alpha,\beta,\gamma)=(4,2,2)$.
The proof for the degree-8 case is completely analogous.
\end{proof}
The above corollary can be easily generalized to non-simply connected algebras of a simple kind.
\begin{corollary} \label{cor:Massey-b2}
Let $\Aa^\ast$ be a connected Poincar\'e DGCA of degree $n$. Assume $\Aa^\ast=\Aa^\ast_{\mathrm{sc}}[t_1,\dots,t_k]$, 
with $\Aa^\ast_{\mathrm{sc}}$ a simply connected Poincar\'e DGCA of degree $(n-k)$ of Hodge type, 
where $t_1,\dots,t_k$ are variables in degree 1 with $dt_i=0$.
	\begin{enumerate}
		\item
		If $n \leq 6$, then the Massey product of any Massey triple vanishes.
		\item
		If $n = 7$, then the Massey product vanishes for every Massey triple of degree $\neq (2,2,2)$.
		\item
		If $n = 8$, then the Massey product vanishes for every Massey triple of degree $\notin \{(2,2,2), (2,2,3), (2,3,2), (3,2,2)\}$.
	\end{enumerate}
\end{corollary}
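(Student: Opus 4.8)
The plan is to exploit the factorization $\Aa^\ast=\Aa^\ast_{\mathrm{sc}}[t_1,\dots,t_k]\cong\Aa^\ast_{\mathrm{sc}}\otimes\Lambda(t_1,\dots,t_k)$, in which $\Lambda(t_1,\dots,t_k)$ is the exterior algebra on the degree-$1$ closed generators $t_i$ (in characteristic $0$ one has $t_i^2=0$), so as to reduce every triple Massey product of $\Aa^\ast$ to triple Massey products of $\Aa^\ast_{\mathrm{sc}}$, where Corollary \ref{cor:Massey-b1} can be invoked.

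First I would produce a convenient Hodge-type model of $\Aa^\ast$. Choosing an adapted Hodge-type decomposition of $\Aa^\ast_{\mathrm{sc}}$ of the kind used in the proof of Corollary \ref{cor:Massey-b1}, with harmonic subspace $\Hh^\ast_{\mathrm{sc}}$ and operator $d^-_{\mathrm{sc}}$, I would tensor it with $\Lambda(t_1,\dots,t_k)$. This yields a Hodge-type decomposition of $\Aa^\ast$ with harmonic subspace $\Hh^\ast=\Hh^\ast_{\mathrm{sc}}\otimes\Lambda(t_1,\dots,t_k)$ and $d^-=d^-_{\mathrm{sc}}\otimes\mathrm{id}$; the orthogonality relation (\ref{def:Hodge-DGA2}) for $\Aa^\ast$ follows from the one for $\Aa^\ast_{\mathrm{sc}}$, because the Poincar\'e pairing of $\Aa^\ast$ is (up to sign) the tensor product of the pairing of $\Aa^\ast_{\mathrm{sc}}$ with the pairing on $\Lambda(t_1,\dots,t_k)$ that extracts the top exterior power. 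By K\"unneth, $H^\ast(\Aa^\ast)=H^\ast(\Aa^\ast_{\mathrm{sc}})\otimes\Lambda(t_1,\dots,t_k)$.

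Next I would compute the transferred ternary operation on this model. Writing a harmonic argument as $\alpha=\sum_I\alpha_I\otimes t_I$ with $\alpha_I\in\Hh^\ast_{\mathrm{sc}}$, expanding the Merkulov formula (\ref{eq:mu3}) for $m_3$, and using $d^-=d^-_{\mathrm{sc}}\otimes\mathrm{id}$ together with the fact that $t_It_J=0$ whenever $I\cap J\neq\emptyset$, a sign-careful but mechanical computation yields
\[
\mu_3^{\Aa^\ast}(u,v,w)=\sum_{\substack{I,J,K\subseteq\{1,\dots,k\}\\ \text{pairwise disjoint}}}\pm\,\mu_3^{\mathrm{sc}}(u_I,v_J,w_K)\otimes t_{I\sqcup J\sqcup K}
\]
for homogeneous $u=\sum_I u_I\otimes t_I$, $v=\sum_J v_J\otimes t_J$, $w=\sum_K w_K\otimes t_K$ in $H^\ast(\Aa^\ast)$, where $u_I\in H^{\deg u-|I|}(\Aa^\ast_{\mathrm{sc}})$ and similarly for $v,w$, and $\mu_3^{\mathrm{sc}}$ is the ternary multiplication of Corollary \ref{cor:towards-hoch} on $H^\ast(\Aa^\ast_{\mathrm{sc}})$. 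Since $\Aa^\ast_{\mathrm{sc}}$ is simply connected of degree $m:=n-k$, Corollary \ref{cor:towards-hoch} and the proof of Corollary \ref{cor:Massey-b1} give the following facts about $\mu_3^{\mathrm{sc}}$: it vanishes identically if $m\leq6$; $\mu_3^{\mathrm{sc}}(\xi,\eta,\zeta)=0$ unless $\deg(\xi,\eta,\zeta)=(2,2,2)$ if $m=7$; and $\mu_3^{\mathrm{sc}}(\xi,\eta,\zeta)=0$ unless $\deg(\xi,\eta,\zeta)\in\{(2,2,2),(2,2,3),(2,3,2),(3,2,2)\}$ if $m=8$. A short case check then finishes the argument: if $k=0$ then $\mu_3^{\Aa^\ast}=\mu_3^{\mathrm{sc}}$ and the statement is Corollary \ref{cor:Massey-b1}; if $m\leq6$ (which covers all cases with $n\leq6$, as well as $n=7$ with $k\geq1$ and $n=8$ with $k\geq2$) then $\mu_3^{\Aa^\ast}\equiv0$; and the only remaining case, since $n\leq8$, is $n=8$, $k=1$, $m=7$, where the requirement that $I,J,K$ be pairwise disjoint subsets of a one-element set, together with the necessary condition $\deg(u_I,v_J,w_K)=(2,2,2)$ for a nonzero summand, forces $\deg(u,v,w)\in\{(2,2,2),(2,2,3),(2,3,2),(3,2,2)\}$. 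In all cases $\mu_3^{\Aa^\ast}(u,v,w)=0$ whenever $\deg(u,v,w)$ lies outside the exceptional set claimed in (1)--(3). Finally, $\Aa^\ast$ is a Poincar\'e DGCA of Hodge type, so the argument of Remark \ref{rem:Massey} — which uses only the explicit formula (\ref{eq:mu3}) and the chosen Hodge-type decomposition, not any connectedness hypothesis — shows that $\mu_3^{\Aa^\ast}(u,v,w)$ represents the triple Massey product $\langle u,v,w\rangle$ whenever $u\cdot v=0=v\cdot w$; hence this product vanishes in all the asserted degrees.

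The main obstacle will be establishing the displayed formula for $\mu_3^{\Aa^\ast}$, i.e. identifying the transferred $m_3$ on $\Hh^\ast_{\mathrm{sc}}\otimes\Lambda(t_1,\dots,t_k)$ with $\sum\pm\,\mu_3^{\mathrm{sc}}(-,-,-)\otimes t_{I\sqcup J\sqcup K}$. This requires verifying that the tensor Hodge decomposition is genuinely of Hodge type (so that Remark \ref{rem:expilicy-m3} and Remark \ref{rem:Massey} apply to $\Aa^\ast$) and a careful tracking of Koszul signs in the expansion of (\ref{eq:mu3}). Once the formula is in hand, the rest is a finite degree count resting on the $\mu_3^{\mathrm{sc}}$-level refinement of Corollary \ref{cor:Massey-b1}.
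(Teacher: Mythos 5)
Your proposal is correct and follows essentially the same route as the paper: the transferred ternary operation on $H^\ast(\Aa^\ast_{\mathrm{sc}})\otimes\Lambda(t_1,\dots,t_k)$ is the extension of $\mu_3^{\mathrm{sc}}$ by the closed degree-one variables, and the degree count from Corollary \ref{cor:Massey-b1} finishes the argument. The only cosmetic difference is that you treat all cases uniformly through the tensor formula, whereas the paper disposes of the cases with $n-k\le 6$ by quoting formality of $\Aa^\ast_{\mathrm{sc}}$ and extending the resulting zig-zag of quasi-isomorphisms by the variables $t_i$, reserving the explicit $\mu_3$ computation for the single case $n=8$, $k=1$.
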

\begin{proof} For $k=0$ we are back to Corollary \ref{cor:Massey-b1}, so let us assume $k\geq 1$. If $n\leq 7$ or if $n=8$ and $k\geq 2$, then $\Aa_{\mathrm{sc}}^\ast$ is formal by Corollary \ref{cor:formal-lowdim}, and so also $\Aa^\ast$ is formal. 
Namely, as $\Aa^\ast_{sc}$ is formal we have a zig-zag of quasi-isomorphisms of DGCAs $\Aa^\ast_{sc}\leftarrow \Dd^\ast_1\to \Dd^\ast_2\leftarrow\cdots \rightarrow H^\ast(\Aa^\ast_{sc})$. As the variables $t_i$ are closed, this induces a zig-zag of quasi-isomorphisms of DGCAs $\Aa^\ast_{sc}[t_1,\dots,t_k]\leftarrow \Dd^\ast_1[t_1,\dots,t_k]\to \Dd^\ast_2[t_1,\dots,t_k]\leftarrow\cdots \rightarrow H^\ast(\Aa^\ast_{sc})[t_1,\dots,t_k]=H^\ast(\Aa^\ast_{sc}[t_1,\dots,t_k])$.
So we are reduced to considering the case $\Aa^\ast=\Aa^\ast_{\mathrm{sc}}[t]$ with $\Aa^\ast_{\mathrm{sc}}$ a degree-$7$ simply connected 
 Poincar\'e DGCA. As $dt=0$, we have $H^\ast(\Aa^\ast)=H^\ast(\Aa^\ast_{\mathrm{sc}})[t]$ and the $A_\infty$-algebra structure on $H^\ast(\Aa^\ast)$ induced by the Kadeishvili-Merkulov construction is just the $\mathbb{F}[t]$-linear extension of the $A_\infty$-algebra structure on $H^\ast(\Aa^\ast_{\mathrm{sc}})$. In particular, it is a $A_3$-algebra structure and 
\begin{align*}
\mu_3([\alpha],[\beta],[\gamma])=
\mu_3^{\mathrm{sc}}&([\alpha_0],[\beta_0],[\gamma_0])\\
&+(-1)^{\deg [\beta]\cdot[\gamma]} \mu_3^{\mathrm{sc}}([\alpha_{-1}],[\beta_0],[\gamma_0])t\\
&+(-1)^{\deg [\gamma]}\mu_3^{\mathrm{sc}}([\alpha_0],[\beta_{-1}],[\gamma_0])t\\
&+\mu_3^{\mathrm{sc}}([\alpha_0],[\beta_0],[\gamma_{-1}])t,
\end{align*}
where we have written $[x]=[x_0]+[x_{-1}]t$ for an homogeneous element $[x]$ in $H^\ast(\Aa^\ast)$.  As $\deg [x_0]=\deg[x]$ and $\deg[x_{-1}]=\deg[x]-1$, by Corollary \ref{cor:Massey-b1} we see that $\mu_3([\alpha],[\beta],[\gamma])$ will vanish unless $\deg([\alpha],[\beta],[\gamma])\notin \{(2,2,2), (2,2,3), (2,3,2), (3,2,2)\}$.
\end{proof}

\section{Splitting off parallel $1$-forms}\label{sec:h1form}

In this section  we    generalize the Cheeger-Gromoll splitting theorem  to the class of  closed  Riemannian  manifolds  whose  harmonic 1-forms are parallel (Theorem \ref{thm:Cheeger-Gromoll}, Remark \ref{rem:CG}).  Using  this  result  we extend   results  in the previous
section that concern  Massey triple products to  the class  of closed orientable    Riemannian  manifolds satisfying the condition of Theorem \ref{thm:Cheeger-Gromoll} (Corollary \ref{cor:Massey-lowdim})  and derive  the almost formality  of compact
 $G_2$-manifolds, obtained   by Chan-Karigiannis-Tsang in \cite{CKT}, from our results (Remark \ref{rem:CKT}).

\begin{theorem} (Cheeger-Gromoll-type splitting) \label{thm:Cheeger-Gromoll}
	Let $(M^n, g)$ be a closed Riemannian manifold, and suppose that all harmonic $1$-forms are parallel. Then there is a Riemannian fibration
	\[
	F^{n-k} \hookrightarrow M^n \longrightarrow T^k,
	\]
	over a flat $k$-torus, where $k = b^1(M^n)$, and $F^{n-k}$ is closed.
	 In fact, $M^n$ can be written as
	\begin{equation} \label{eq:fiber-M}
	M^n = (\R^k \times F^{n-k})/\Lambda,
	\end{equation}
	where $\Lambda \cong \Z^k$ acts isometrically on $F^{n-k}$ and by translations on $\R^k$. Moreover, $M^n$ is diffeomorphic (but not necessarily isometric) to a quotient of $(T^k \times F^{n-k})/\Gamma$, where $\Gamma \subseteq \Transl(T^k) \times \Diffeo(F^{n-k})$ is a finite abelian group.
\end{theorem}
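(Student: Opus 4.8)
The plan is to produce the fibration directly from the Albanese (Jacobi) map and then analyse its flat structure. Set $k=b^1(M^n)$ and let $\Hh^1(M)\subseteq\Om^1(M)$ be the space of harmonic $1$-forms, so $\dim\Hh^1(M)=k$ and, by hypothesis, every element of $\Hh^1(M)$ is parallel. First I would record two elementary consequences of parallelism: a nonzero parallel $1$-form vanishes nowhere (a parallel section vanishing at a point vanishes identically), and a basis $\om_1,\dots,\om_k$ of $\Hh^1(M)$ is pointwise linearly independent; hence the $\om_i$ span a parallel rank-$k$ subbundle of $T^*M$, dually a parallel rank-$k$ distribution $\Vv=\spann(X_1,\dots,X_k)$ with $X_i:=\om_i^\sharp$. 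Since $\nabla$ is torsion free and $\Vv$ is parallel, $[X_i,X_j]=\nabla_{X_i}X_j-\nabla_{X_j}X_i=0$, so $\Vv$ is integrable, and so is $\Vv^\perp$. Now form the Albanese torus $T^k:=\Hh^1(M)^\vee/\Lambda_0$ ($\Lambda_0$ the lattice of periods) with the flat metric induced by the $L^2$-inner product on $\Hh^1(M)$, and the Albanese map $\mathrm{alb}\colon M\to T^k$, $p\mapsto[\,\om\mapsto\int_{p_0}^p\om\,]$. Its differential at $p$ sends $X$ to $(\om\mapsto\om_p(X))$; since $\Hh^1(M)\to T_p^*M$ is injective this is onto, so $\mathrm{alb}$ is a submersion with $\ker d\,\mathrm{alb}=\Vv^\perp$, and because the $\om_i$ are parallel the function $\langle d\,\mathrm{alb}(X_i),d\,\mathrm{alb}(X_j)\rangle$ equals the constant $\langle X_i,X_j\rangle$, so $\mathrm{alb}$ is a Riemannian submersion with horizontal distribution $\Vv$. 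As $M$ is compact, $\mathrm{alb}$ is a proper submersion, hence a locally trivial fibre bundle (Ehresmann), onto all of $T^k$; its fibre $F:=\mathrm{alb}^{-1}(\mathrm{pt})$ is a closed $(n-k)$-manifold. This gives the stated Riemannian fibration $F^{n-k}\hookrightarrow M^n\to T^k$.

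Next I would extract the flat structure. The horizontal distribution $\Vv$ is integrable, and horizontal lifts of loops in $T^k$ are integral curves of constant-coefficient combinations $\sum a_iX_i$, whose flows are isometries of $M$ carrying fibres of $\mathrm{alb}$ isometrically onto fibres; thus $\Vv$ is a complete flat Ehresmann connection with structure group in $\Isom(F)$. Consequently $M$ is the flat bundle associated to its monodromy representation $\mu\colon\pi_1(T^k)=\Z^k\to\Isom(F)$, i.e.
\[
M^n=(\R^k\times F^{n-k})/\Lambda,\qquad \Lambda\cong\Z^k,
\]
where $\Lambda$ acts on $\R^k$ by the deck transformations of $\R^k\to T^k$ (a translation lattice) and on $F$ through $\mu$ by isometries, freely since this is $M$; this is (\ref{eq:fiber-M}). (The same decomposition can be obtained by pulling back the de~Rham splitting $\widetilde M=\R^N\times\widetilde F_0$ of the universal cover along $\pi_1(M)$, but the Albanese route sidesteps the bookkeeping with de~Rham uniqueness.)

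Finally, for the diffeomorphism with a finite quotient of $T^k\times F$, the key observation is that $A:=\overline{\mu(\Z^k)}\subseteq\Isom(F)$ is a compact abelian Lie group, $A\cong A^0\times E$ with $A^0$ a torus and $E$ finite abelian. Put $\Lambda'':=\ker\!\big(\Z^k\xrightarrow{\ \mu\ }A\to A/A^0\big)$; it has finite index in $\Z^k$, so $M'':=(\R^k\times F)/\Lambda''$ is a finite cover of $M$ with $M=M''/(\Z^k/\Lambda'')$ and $\Z^k/\Lambda''$ finite abelian. On $M''$ the monodromy $\mu|_{\Lambda''}$ takes values in the torus $A^0$; choosing a homomorphic lift $\widetilde\mu\colon\Lambda''\to\mathrm{Lie}(A^0)$ (possible as $\Lambda''$ is free abelian) and its linear extension $L\colon\R^k\to\mathrm{Lie}(A^0)$, the fibrewise ``gauge transformation'' $\Psi(t,f)=(t,\exp(-L(t))\cdot f)$ conjugates the twisted $\Lambda''$-action on $\R^k\times F$ to the product action, so that $M''\cong T^k\times F$ diffeomorphically. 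Transporting the residual $\Z^k/\Lambda''$-action through $\Psi$ shows it acts by translations on the $T^k$-factor and by elements of $A\subseteq\Diffeo(F)$ on the $F$-factor, yielding $M^n\cong(T^k\times F^{n-k})/\Gamma$ with $\Gamma\subseteq\Transl(T^k)\times\Diffeo(F^{n-k})$ finite abelian, as claimed.

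I expect the last step to be the main obstacle: the monodromy $\mu$ need not have finite image (for instance an irrational rotation acting on an $S^2$-fibre), so one genuinely must use that its closure is a torus, pass to a finite cover to kill the finite part of that closure, and then smoothly untwist the torus-valued monodromy by a linear gauge transformation; by contrast, checking that $\mathrm{alb}$ is a Riemannian submersion onto all of $T^k$ and that the flat connection is complete is routine, though it too relies essentially on the parallelism hypothesis (without it $\mathrm{alb}$ need not even be a submersion).
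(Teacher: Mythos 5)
Your proof is correct, but the first two thirds take a genuinely different route from the paper's. You build the fibration directly as the Albanese map: pointwise linear independence of the parallel harmonic $1$-forms makes $\mathrm{alb}$ a proper Riemannian submersion onto $T^k$ whose horizontal distribution $\Vv$ is integrable with holonomy by isometries of the fibre, so $M$ is the flat bundle $(\R^k\times F)/\Lambda$ attached to the monodromy $\mu\colon\Z^k\to\Isom(F)$. The paper instead applies the de Rham splitting theorem to the universal cover, writes $\widetilde M=\R^k\times\widetilde F$, observes that the deck group $\Pi$ sits inside $\Transl_{\R^k}\times\Isom(\widetilde F)$, sets $F:=\widetilde F/\Pi_0$ with $\Pi_0$ the kernel of the translation part, and then needs a short $H_1$ argument to show that $\Lambda=\Pi/\Pi_0$ has rank exactly $k=b^1(M)$ and is a full lattice. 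Your route gets the rank and the surjection onto $T^k$ for free from the Albanese construction, at the price of invoking the classification of flat bundles by their holonomy representation (harmless here, as completeness of horizontal lifts is automatic over a compact total space); the paper's route is more self-contained but must do the lattice bookkeeping by hand. The final step is essentially the same in both arguments: take the closure $A=\overline{\mu(\Z^k)}\subseteq\Isom(F)$, which is a compact abelian Lie group by Myers--Steenrod, pass to the finite-index subgroup of $\Z^k$ mapping into the identity component $A^0$ (a torus), and untwist the $A^0$-valued monodromy by a linear gauge transformation --- the paper's diffeomorphism $([v],f)\mapsto[(v,\bar\rho(v)f)]$ is precisely the inverse of your $\Psi$. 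One small normalization to fix: to make $\mathrm{alb}$ literally a Riemannian submersion you should put on $\Hh^1(M)^\vee$ the dual of the constant pointwise inner product on parallel forms rather than the $L^2$ one (they differ by the factor $\vol(M)$); this does not affect the statement, which only requires some flat metric on $T^k$.
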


\begin{proof}
	Let $\eps^1, \ldots, \eps^k$ be the parallel $1$-forms with dual vector fields $X_i := (\eps^i)^\#$. Since the distribution spanned by the $(X_i)$ is parallel, so is its orthogonal complement, and the de Rham splitting theorem implies that the universal cover of $(M, g)$ is a Riemannian product
	\[
	\tilde M^n = \R^k \times \tilde F^{n-k},
	\]
	and the lifts $\tilde X_i$ of $X_i$ must be tangent to $\R^k$ and parallel, whence constant. As the deck transformations of the covering $\tilde M^n \to M$ must preserve $\tilde X_i$, they must act by translation on $\R^k$, whence the deck group $\Pi \cong \pi_1(M^n)$ must be contained in
	\[
	\Pi \subseteq \Transl_{\R^k} \times \Isom(\tilde F^{n-k}).
	\]
	Let $\Pi_0 \subseteq \Pi$ be the normal subgroup which acts trivially on $\R^k$, and let $F^{n-k} := \tilde F^{n-k}/\Pi_0$. Then we may write
	\[
	(M^n, g) = (\R^k \times \tilde F^{n-k})/\Pi =  (\R^k \times F^{n-k})/\Lambda,
	\]
	with a faithful map $\Lambda := \Pi/\Pi_0 \hookrightarrow \Transl_{\R^k}$, so that $\Lambda$ is abelian and torsion free, i.e., $\Lambda \cong \Z^l$ for some $l$. Therefore, $\Pi_0$ contains the commutator group $[\Pi, \Pi]$, so that there is a surjection $H_1(M^n) = \Pi/[\Pi, \Pi] \to \Lambda$, and as $H_1(M^n)/Tor(H_1(M^n)) \cong \Z^k$ this means that $k \geq l$.
	
	On the other hand, projection onto the first factor of $\R^k \times F^{n-k}$ implies that $\R^k/\Lambda$ is compact, whence $l = k$, and the image of the inclusion $\Lambda \hookrightarrow \Transl_{\R^k} \cong \R^k$ must be a full lattice, so that $\R^k/\Lambda$ is a flat $k$-torus, and this projection yields the asserted fiber bundle structure.
	
	Consider the homomorphism $\rho: \Lambda \cong \Z^k \to \Isom(F^{n-k})$ from (\ref{eq:fiber-M}). The closure $G := \overline{\rho(\Lambda)} \subseteq \Isom(F^{n-k})$ is a compact abelian Lie group, as $\Isom(F^{n-k})$ is a compact Lie group \cite{MS39}. In particular, $G$ has finitely many components. Thus, $\Lambda_0 := \rho^{-1}(G_0) \subseteq \Lambda$ is a subgroup of finite index, where $G_0 \subseteq G$ denotes the identity component, and we have a Riemannian covering map
	\[
	\hat M^n := (\R^k \times F^{n-k})/\Lambda_0 \longrightarrow M^n = (\R^k \times F^{n-k})/\Lambda
	\]
	which is the quotient by a free action of the finite abelian group $\Gamma := \Lambda/\Lambda_0$ on $\hat M^n$.
	
	We finally have to show that $\hat M^n$ is diffeomorphic to $T^k \times F^{n-k}$. As $G_0$ is a compact connected abelian Lie group, 
it is isomorphic to the torus. 
Then $\rho|_{\Lambda_0}: \Lambda_0 \rightarrow G_0$ extends to 
a Lie group homomorphism $\bar \rho: \R^k \cong \Lambda_0 \otimes \R \rightarrow G_0$. 
Then 
$$
T^k \times F^{n-k} \cong 
(\R^k/ \Lambda_0) \times F^{n-k} \rightarrow (\R^k \times F)/\Lambda_0, \qquad 
([v],f) \mapsto [(v, \bar \rho (v) f)]
$$
gives a diffeomorphism. 
\end{proof}

\begin{remark}\label{rem:CG}
By Bochner's theorem \cite{Bochner1946}, all harmonic 1-forms on a Riemannian manifold with nonnegative Ricci curvature are parallel, so that Theorem \ref{thm:Cheeger-Gromoll} applies. In fact, the conclusion of Theorem \ref{thm:Cheeger-Gromoll}  in the case of nonnegative Ricci curvature is also known as the Cheeger-Gromoll   splitting theorem \cite{CG1971, CG1972}. This holds, in particular, for $G_2$- and $\text{Spin}(7)$-manifolds as these are Ricci flat.
\end{remark}

\begin{proposition} \label{prop:reduce-b1}
Let $(M^n, g)$ be as in Theorem \ref{thm:Cheeger-Gromoll}, and orientable. Then $\Om^\ast(M)$ is equivalent to $(\Lambda^\ast \Hh^1(M^n)) \otimes \Qq^\ast = \Qq^\ast[t_1, \ldots, t_k]$ 
with $k = b^1(M^n)$, where $\Qq^\ast$ is a simply connected non-degenerate Poincar\'e DGCA of Hodge type of degree $(n-k)$.
\end{proposition}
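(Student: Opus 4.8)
The plan is to use Theorem \ref{thm:Cheeger-Gromoll} in its \emph{diffeomorphism} form, so that the group acting on $M$ becomes finite, and then to reduce the whole statement to an averaging argument. First I would invoke Theorem \ref{thm:Cheeger-Gromoll}: since all harmonic $1$-forms of $(M,g)$ are parallel, $M^n$ is diffeomorphic to $(T^k\times F^{n-k})/\Gamma$, where $k=b^1(M^n)$, $F=F^{n-k}$ is a closed connected manifold (the de Rham factor is connected), and $\Gamma\subseteq\Transl(T^k)\times\Diffeo(F)$ is a finite abelian group acting on the first factor by translations. Because $M$ and $T^k$ are orientable, a $\Gamma$-invariant orientation on $T^k\times F$ (pulled back from $M$) forces $F$ to be orientable and $\Gamma$ to act on $F$ by orientation-preserving diffeomorphisms; averaging a Riemannian metric on $F$ we may assume $\Gamma$ acts by orientation-preserving isometries. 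Consequently $\Om^\ast(M)\cong\Om^\ast(T^k\times F)^\Gamma=\bigl(\Om^\ast(T^k)\otimes\Om^\ast(F)\bigr)^\Gamma$ as DGCAs.

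Next I would contract the torus factor $\Gamma$-equivariantly. The inclusion of translation-invariant forms $\Lambda^\ast(\R^k)^\vee\hookrightarrow\Om^\ast(T^k)$ is a quasi-isomorphism, it is $\Gamma$-equivariant, and $\Gamma$ acts \emph{trivially} on $\Lambda^\ast(\R^k)^\vee$ since its $\Transl(T^k)$-component fixes all constant forms. Tensoring with $\Om^\ast(F)$ keeps it a quasi-isomorphism (K\"unneth over the field $\F$), and taking $\Gamma$-invariants is exact (as $|\Gamma|$ is invertible in $\F$) and hence preserves quasi-isomorphisms, so we obtain
\[
\Om^\ast(M)=\Om^\ast(T^k\times F)^\Gamma\ \simeq\ \bigl(\Lambda^\ast(\R^k)^\vee\otimes\Om^\ast(F)\bigr)^\Gamma\ =\ \Lambda^\ast(\R^k)^\vee\otimes\Om^\ast(F)^\Gamma ,
\]
the last equality because $\Gamma$ acts trivially on the first tensor factor. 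Setting $\Qq^\ast:=\Om^\ast(F)^\Gamma$ and recalling $\dim\Hh^1(M^n)=b^1(M^n)=k$, so that $\Lambda^\ast(\R^k)^\vee\cong\Lambda^\ast\Hh^1(M^n)$, this is precisely the asserted equivalence $\Om^\ast(M)\simeq\Qq^\ast[t_1,\dots,t_k]$ with $|t_i|=1$.

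It then remains to check that $\Qq^\ast=\Om^\ast(F)^\Gamma$ has the stated properties. Averaging over the finite group $\Gamma$ shows that $\Qq^\ast\hookrightarrow\Om^\ast(F)$ induces $H^\ast(\Qq^\ast)=H^\ast(F)^\Gamma$; since $\Gamma$ preserves the cup product and the fundamental class of the closed oriented $(n-k)$-manifold $F$, the Poincar\'e pairing of $F$ is $\Gamma$-invariant, and a further averaging argument shows its restriction to $H^\ast(F)^\Gamma$ is still non-degenerate, with top degree $n-k$, so $\Qq^\ast$ is a Poincar\'e DGCA of degree $n-k$. It is non-degenerate in the sense of Definition \ref{def:splitting-map}: the Hodge star of the $\Gamma$-invariant metric and orientation preserves $\Qq^\ast$, and $\langle\alpha,\star\alpha\rangle=\lVert\alpha\rVert_{L^2}^2$, so no nonzero $\alpha\in\Qq^\ast$ is orthogonal to all of $\Qq^\ast$. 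It is of Hodge type because the Hodge decomposition of $\Om^\ast(F)$ for the $\Gamma$-invariant metric is $\Gamma$-equivariant and hence restricts to one of $\Qq^\ast$. Finally it is simply connected: $H^0(\Qq^\ast)=H^0(F)^\Gamma=\F$ as $F$ is connected, while the equivalence above and the K\"unneth theorem give $b^1(M)=k+\dim H^1(\Qq^\ast)$, so the equality $k=b^1(M)$ forces $H^1(\Qq^\ast)=0$.

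The main obstacle is not a single hard estimate but the equivariant bookkeeping: one must use the diffeomorphism (not the isometric) presentation of $M$ so that the acting group is finite and averaging is available; one must check that this finite group acts on $F$ by orientation-preserving isometries (needed for Poincar\'e duality of $\Qq^\ast$) and acts trivially on the harmonic forms of the torus factor (so that the torus can be contracted $\Gamma$-equivariantly); and one must use exactness of $(-)^\Gamma$ to transport all of this through the invariants. Once Theorem \ref{thm:Cheeger-Gromoll} is granted, the remaining arguments are routine.
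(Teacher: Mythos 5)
Your argument is correct and follows essentially the same route as the paper: invoke the diffeomorphism form of Theorem \ref{thm:Cheeger-Gromoll}, average so that the finite group $\Gamma$ acts by isometries, identify $\Om^\ast(M)$ with $\Om^\ast(T^k\times F^{n-k})^\Gamma$, contract the torus factor onto its translation-invariant forms with $\Qq^\ast=\Om^\ast(F^{n-k})^{\Gamma}$, and use the K\"unneth formula to conclude $H^1(\Qq^\ast)=0$. The only (harmless) divergence is that you justify the torus contraction by K\"unneth plus exactness of $(-)^\Gamma$ in characteristic zero, whereas the paper compares harmonic subspaces of the $\Gamma$-invariant Hodge decompositions; your explicit checks of orientability of $F$ and of non-degeneracy via $\langle\alpha,\star\alpha\rangle=\lVert\alpha\rVert^2_{L^2}$ supply details the paper leaves implicit.
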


\begin{proof}
By Theorem \ref{thm:Cheeger-Gromoll}, $M^n$ is diffeomorphic to $(T^k \times F^{n-k})/\Gamma$, where $\Gamma \subseteq \Transl(T^k) \times \Diffeo(F^{n-k})$ is a finite abelian group. Let $\Gamma' \subseteq \Diffeo(F^{n-k})$ be the projection of $\Gamma$ to the second factor, and choose a product metric $\tilde g:= g_0 \oplus g_1$ on $T^k \times F^{n-k}$ such that $g_0$ is flat and $g_1$ is $\Gamma'$-invariant. Then $\Gamma$ acts by isometries so that $\pi: (T^k \times F^{n-k}, \tilde g) \to (M^n, g)$ is a Riemannian covering for some metric $g$ on $M$. Clearly, the translation invariant fields on $T^k$ induce parallel vector fields on $M^n$, so that $(M^n, g)$ has $k = b^1(M^n)$ parallel vector fields. Thus, all harmonic $1$-forms on $(M^n,g)$ are parallel.

As $\Gamma$ is a finite group, there is a projection map
\[
\pi_\Gamma: \Om^\ast(T^k \times F^{n-k}) \longrightarrow \Om^\ast(T^k \times F^{n-k})^\Gamma, \qquad \alpha \longmapsto \dfrac1{|\Gamma|} \sum_{\gamma \in \Gamma} \gamma^\ast(\alpha),
\]
and since $\Gamma$ acts by isometries, it follows that $\pi_\Gamma$ commutes with $d, d^\ast$ and $\triangle$. In particular, applying $\pi_\Gamma$ to the components of the Hodge decomposition of $\Om^\ast(T^k \times F^{n-k})$ induces a Hodge decomposition
\begin{equation} \label{eq:Hodge-Gamma}
\begin{array}{ll}
\Om^\ast(T^k \times F^{n-k})^\Gamma = & d\Om^{\ast-1}(T^k \times F^{n-k})^\Gamma \oplus \Hh^\ast(T^k \times F^{n-k})^\Gamma\\
& \oplus \; d^\ast\Om^{\ast+1}(T^k \times F^{n-k})^\Gamma.
\end{array}
\end{equation}
In the same way, we obtain the Hodge decomposition of $\Qq^\ast := \Om(F^{n-k})^{\Gamma'}$
\begin{equation} \label{eq:Hodge-Q}
\Qq^\ast = d\Qq^{\ast-1} \oplus \Hh^\ast(F^{n-k})^{\Gamma'} \oplus d^\ast\Qq^{\ast+1}.
\end{equation}
Furthermore, since all $\alpha^k \in \Lambda^\ast \Hh^1(T^k)$ are parallel, we have for $\beta^l \in \Qq^\ast$: $\ast(\alpha^k \wedge \beta^l) = \pm (\ast_{T^k} \alpha^k) \wedge (\ast_{F^{n-k}} \beta^l) \in \Aa^\ast$, whence $d(\alpha^k \wedge \beta^l) = \pm \alpha^k \wedge d\beta^l$ and $d^\ast(\alpha^k \wedge \beta^l) = \pm \alpha^k \wedge d^\ast \beta^l$. Thus, (\ref{eq:Hodge-Q}) induces a Hodge decomposition of $\Aa^\ast := \Lambda^\ast \Hh^1(T^k) \wedge \Qq^\ast \cong \Qq^\ast[t_1, \ldots, t_k]$
\begin{equation} \label{eq:Hodge-productgamma}
\Aa^\ast = d\Aa^{\ast-1} \oplus (\Lambda^\ast \Hh^1(T^k) \wedge \Hh^\ast(F^{n-k})^{\Gamma'}) \oplus d^\ast\Aa^{\ast+1},
\end{equation}
and the $\Gamma$-invariant harmonic forms of the product metric on $T^k \times F^{n-k}$ decompose as
\[
\Hh^\ast(T^k \times F^{n-k})^\Gamma = (\Hh^\ast(T^k) \wedge \Hh^\ast(F^{n-k}))^\Gamma = \Lambda^\ast \Hh^1(T^k) \wedge \Hh^\ast(F^{n-k})^{\Gamma'},
\]
which thus coincides with the space of harmonic forms in $\Aa^\ast$ by (\ref{eq:Hodge-productgamma}). Therefore, the inclusion $\Aa^\ast \hookrightarrow \Om^\ast(T^k \times F^{n-k})^\Gamma$ is a quasi-isomorphism, and $\pi^\ast: \Om^\ast(M^n) \to \Om^\ast(T^k \times F^{n-k})^\Gamma$ is an isomorphism, whence $\Om^\ast(M^n)$ is quasi-isomorphic to $\Aa^\ast$ as claimed.

By the K\"unneth formula, $\dim H^1(\Aa^\ast) = k + \dim H^1(\Qq^\ast)$, and as $\dim H^1(\Aa^\ast) = b^1(M^n) = k$, it follows that $H^1(\Qq^\ast) = 0$, i.e., $\Qq^\ast$ is a simply connected $(n-k)$-dimensional Poincar\'e algebra as asserted.
\end{proof}

If $n-b^1(M) \leq 6$, then $\Qq^\ast$ in Proposition \ref{prop:reduce-b1} is formal by Corollary \ref{cor:formal-lowdim}, whence so is $\Qq^\ast[t_1, \ldots, t_k]$ by the same argument as in the proof of Corollary \ref{cor:Massey-b2}. Since this in turn is equivalent to $\Om^\ast(M^n)$ by Proposition \ref{prop:reduce-b1}, we immediately obtain the following.

\begin{corollary} \label{cor:lowdim-formal}
Let $(M^n, g)$ be a closed orientable Riemannian manifold, and suppose that all harmonic $1$-forms are parallel. If $n - b^1(M^n) \leq 6$, then $M^n$ is formal. In particular, any $G_2$-manifold with $b^1(M) > 0$ (i.e., with holonomy strictly contained in $G_2$) and any $\text{Spin(7)}$-manifold with $b^1(M) > 1$ is formal.
\end{corollary}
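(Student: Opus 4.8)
The plan is to reduce the assertion to the low-dimensional formality already established in Corollary~\ref{cor:formal-lowdim}, after splitting off the parallel $1$-forms. First I would invoke Proposition~\ref{prop:reduce-b1}: since $M^n$ is closed, orientable and all its harmonic $1$-forms are parallel, $\Om^\ast(M^n)$ is equivalent to $\Qq^\ast[t_1,\dots,t_k]$, where $k=b^1(M^n)$ and $\Qq^\ast$ is a simply connected non-degenerate Poincar\'e DGCA of Hodge type of degree $n-k$. By hypothesis $n-k\le 6$, so Corollary~\ref{cor:formal-lowdim} applied to $\Qq^\ast$ (with $r=2$) shows that $\Qq^\ast$ is formal.

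The second step is to propagate formality through the polynomial extension, exactly as in the proof of Corollary~\ref{cor:Massey-b2}. A zig-zag of DGCA quasi-isomorphisms $\Qq^\ast\leftarrow\Dd^\ast_1\to\Dd^\ast_2\leftarrow\cdots\to H^\ast(\Qq^\ast)$ witnessing the formality of $\Qq^\ast$ can be tensored over $\F$ with the polynomial algebra $\F[t_1,\dots,t_k]$ on closed generators of degree one; since each $t_i$ is a cocycle, this gives a zig-zag $\Qq^\ast[t_1,\dots,t_k]\leftarrow\Dd^\ast_1[t_1,\dots,t_k]\to\cdots\to H^\ast(\Qq^\ast)[t_1,\dots,t_k]=H^\ast(\Qq^\ast[t_1,\dots,t_k])$ of quasi-isomorphisms, so $\Qq^\ast[t_1,\dots,t_k]$ is formal. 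Composing with the equivalence of Proposition~\ref{prop:reduce-b1} then shows $\Om^\ast(M^n)$ is formal, i.e.\ $M^n$ is formal. I do not anticipate a genuine obstacle here; the only point needing a word of care is this stability of formality under adjoining closed degree-one generators, which is the base-change argument just recalled.

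Finally, for the two named special cases I would use Remark~\ref{rem:CG}: $G_2$- and $\mathrm{Spin}(7)$-manifolds are Ricci flat, so by Bochner's theorem all of their harmonic $1$-forms are parallel and the hypothesis of the corollary holds. A $7$-dimensional $G_2$-manifold with $b^1(M)>0$ satisfies $n-b^1(M)=7-b^1(M)\le 6$, and an $8$-dimensional $\mathrm{Spin}(7)$-manifold with $b^1(M)>1$ satisfies $8-b^1(M)\le 6$, so both are formal by the first part. It remains only to remark on the parenthetical equivalence ``$b^1(M)>0$'' $\Leftrightarrow$ ``holonomy strictly contained in $G_2$'': a nonzero harmonic $1$-form on a Ricci-flat manifold is parallel, hence corresponds to a nonzero holonomy-fixed vector in $\R^7$, which $G_2$ does not admit, forcing the holonomy to be a proper subgroup; conversely, a proper holonomy group yields a flat de Rham factor (equivalently, the splitting of Theorem~\ref{thm:Cheeger-Gromoll}), hence a parallel $1$-form and $b^1(M)>0$.
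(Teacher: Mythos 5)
Your proof is correct and follows essentially the same route as the paper: reduce via Proposition \ref{prop:reduce-b1} to $\Qq^\ast[t_1,\dots,t_k]$ with $\Qq^\ast$ simply connected of degree $n-k\le 6$, apply Corollary \ref{cor:formal-lowdim} to get formality of $\Qq^\ast$, and propagate through the polynomial extension by the zig-zag argument of Corollary \ref{cor:Massey-b2}. Your added justification of the equivalence between $b^1(M)>0$ and proper holonomy for $G_2$-manifolds is a correct elaboration of a point the paper leaves implicit.
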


\begin{corollary} \label{cor:Massey-lowdim}
Let $(M^n, g)$ be a closed oriented $n$-dimensional Riemannian manifold such that all harmonic $1$-forms are parallel.  Consider    cohomology   group  $H^\ast(M, \R)$.
	\begin{enumerate}
		\item
		If $n \leq 6$, then the Massey product of any Massey triple vanishes.
		\item
		If $n = 7$, then the Massey product vanishes for every Massey triple of degree $\neq (2,2,2)$.
		\item
		If $n = 8$, then the Massey product vanishes for every Massey triple of degree $\notin \{(2,2,2), (2,2,3), (2,3,2), (3,2,2)\}$.
	\end{enumerate}
\end{corollary}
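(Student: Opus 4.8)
The plan is to deduce the statement from Proposition \ref{prop:reduce-b1} and Corollary \ref{cor:Massey-b2}; in fact, once those two results are in hand, the present corollary requires essentially no new argument.

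First I would apply Proposition \ref{prop:reduce-b1}. Since $(M^n,g)$ is closed, oriented, and all its harmonic $1$-forms are parallel, that proposition tells us that $\Om^\ast(M^n)$ is equivalent, i.e.\ connected by a zig-zag of quasi-isomorphisms of DGCAs, to $\Qq^\ast[t_1,\dots,t_k]$ with $k = b^1(M^n)$, where the variables $t_i$ have degree $1$ and are closed, and $\Qq^\ast$ is a simply connected non-degenerate Poincar\'e DGCA of Hodge type of degree $n-k$.

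Next I would invoke the fact that the triple Massey products of a DGCA, together with the degrees in which they are defined, depend only on its equivalence class: a quasi-isomorphism of DGCAs induces an isomorphism on cohomology algebras intertwining Massey products, so the vanishing or non-vanishing of the Massey product of a Massey triple of a given degree is a quasi-isomorphism invariant (this is classical, cf.\ the references collected in Appendix \ref{appendix.massey}, and is compatible with the discussion in Remark \ref{rem:Massey}). Therefore it suffices to prove the three assertions for the Massey triple products of $H^\ast(\Qq^\ast[t_1,\dots,t_k]) = H^\ast(M^n,\R)$. But $\Qq^\ast[t_1,\dots,t_k]$ is exactly an algebra of the form $\Aa^\ast_{\mathrm{sc}}[t_1,\dots,t_k]$ with $\Aa^\ast_{\mathrm{sc}} = \Qq^\ast$ a simply connected Poincar\'e DGCA of Hodge type of degree $(n-k)$, and its total degree is $n$. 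Hence Corollary \ref{cor:Massey-b2} applies directly and yields precisely cases (1)--(3). (For $n \leq 6$ one may alternatively note that $\Qq^\ast$ is formal by Corollary \ref{cor:formal-lowdim}, whence $M^n$ is formal by Corollary \ref{cor:lowdim-formal}, and all Massey products vanish trivially.)

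Since all the substantive work has been carried out in Proposition \ref{prop:reduce-b1} and Corollary \ref{cor:Massey-b2}, there is no real obstacle here; the only point demanding a little care is the degree bookkeeping in the reduction, namely checking that the total degree of $\Qq^\ast[t_1,\dots,t_k]$ is $n$ (not $n-k$) so that the list of exceptional degree triples in Corollary \ref{cor:Massey-b2} matches verbatim the one in the present statement.
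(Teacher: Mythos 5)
Your proposal is correct and follows the paper's own proof verbatim: apply Proposition \ref{prop:reduce-b1} to replace $\Om^\ast(M^n)$ by $\Qq^\ast[t_1,\dots,t_k]$ with $\Qq^\ast$ a simply connected Poincar\'e DGCA of Hodge type of degree $n-k$, and then invoke Corollary \ref{cor:Massey-b2}. The extra remarks you add --- the quasi-isomorphism invariance of triple Massey products and the check that the total degree of $\Qq^\ast[t_1,\dots,t_k]$ is $n$ --- are sensible but implicit in the paper's two-line argument.
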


\begin{proof}    Assume that $(M^n, g)$ satisfies the condition  of Corollary \ref{cor:Massey-lowdim}. By Proposition \ref{prop:reduce-b1} the DGCA $\Om^\ast(M)$ is equivalent to $\Qq^\ast[t_1, \ldots, t_k]$, $k = b^1(M)$, for a simply connected Poincar\'e DGCA $\Qq^\ast$ of Hodge type. Then apply Corollary \ref{cor:Massey-b2}.
\end{proof}

\begin{remark}\label{rem:CKT}
Corollary \ref{cor:Massey-lowdim} covers $G_2$- and $\text{Spin}(7)$-manifolds 
as these are Ricci-flat and hence have parallel harmonic $1$-forms. 
Corollary \ref{cor:Massey-lowdim} (2) for $G_2$-manifolds 
was first proved by \cite[Theorem 4.15]{CKT}. 
\end{remark}


\appendix

\section{Massey Products}\label{appendix.massey}

Let $\Aa^*$ be a DGCA and set $H^*=H^*(\Aa^*)$. 
For $x_1, \cdots, x_l \in H^*$, 
define the {\it $l$th order Massey product} 
$\la x_1, \cdots x_l \ra$, which is a subset of $H^{\sum_{i=1}^l |x_i| +2-l}$, 
for $l \geq 3$ as follows.

\begin{definition}
A set $\{ a_{i j} \}_{1 \leq i \leq j \leq l, (i,j) \neq (1,l)} \subseteq \Aa^*$ is called 
a {\it defining system} for 
the $l$th order Massey product $\la x_1, \cdots x_l \ra$ if 
\begin{itemize}
\item for $1 \leq i \leq l$, $a_{i i}$ represents $x_i$, and 
\item for $1 \leq i < j \leq l$ with $(i,j) \neq (1,l)$, 
\begin{align}\label{eq:defs}
d (a_{i j}) = \sum_{k=i}^{j-1} \bar{a}_{i k} a_{k+1, j}, 
\end{align}
\end{itemize}
where $\bar{a}_{i k} = (-1)^{|a_{i k}|} a_{i k}.$ 
Then we have the cocycle 
$$
\alpha_{\{ a_{i j} \}} = \sum_{k=1}^{l-1} \bar{a}_{1 k} a_{k+1, l}. 
$$
The {\it $l$th order Massey product} $\la x_1, \cdots x_l \ra$ is defined by 
$$
\la x_1, \cdots x_l \ra = 
\left\{ \left[ \alpha_{\{ a_{i j} \}} \right] \mid \{ a_{i j} \} \mbox{ is a defining system} \right\} 
\subseteq H^{\sum_{i=1}^l |x_i| +2-l}. 
$$

The Massey product $\la x_1, \cdots, x_l \ra$ is said to be {\it trivial} 
if it contains $0$.
\end{definition}

\begin{definition}\label{def:Massey tuple}
A {\em Massey $l$-tuple (of $\Aa^\ast$) }is an $l$-tuple $(x_1, \cdots, x_l)$ of cohomology classes 
such that $\la x_1, \cdots, x_l \ra \neq \emptyset.$ 
We call $(|x_1|, \cdots, |x_l|)$ the {\em degree of $(x_1, \cdots, x_l)$}.
\end{definition}

\begin{lemma} \label{lem:vanish MP}
Let $(x_1, \cdots, x_l)$ be a Massey $l$-tuple for $l \geq 3$. 
Then the $l$th order Massey product $\la x_1, \cdots, x_l \ra$ is trivial 
if one of the following holds. 
\begin{enumerate}
\item
We have $x_p=0$ for some $p$. 
\item 
A DGCA $\Aa^*$ is connected and $|x_p|=0$ for some $p$. 
\item 
A DGCA $\Aa^*$ is a connected Poincar\'e DGCA of degree $n$ and 
$\sum_{i=1}^l |x_i| +2-l =n$. 
\end{enumerate}
\end{lemma}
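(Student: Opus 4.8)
The plan is to derive (2) and (3) from (1), so the crux is (1): showing that if some $x_p=0$ then $0\in\langle x_1,\dots,x_l\rangle$, i.e.\ the Massey product is trivial. Since $(x_1,\dots,x_l)$ is a Massey $l$-tuple, I would fix one defining system $\{a_{ij}\}$ and perform surgery on it. Call a block $[i,j]$ \emph{straddling} if $i\le p\le j$ and \emph{one-sided} otherwise (so a one-sided block lies entirely in $\{1,\dots,p-1\}$ or entirely in $\{p+1,\dots,l\}$); note $[1,l]$ itself is straddling but is not an index of a defining system, so this causes no clash. Set $a'_{ij}:=a_{ij}$ on one-sided blocks and $a'_{ij}:=0$ on straddling blocks; then $a'_{pp}=0$ represents $x_p=0$. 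The claim to prove is that $\{a'_{ij}\}$ is again a defining system and that $\alpha_{\{a'_{ij}\}}=0$, which yields $0\in\langle x_1,\dots,x_l\rangle$.

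To check \eqref{eq:defs} for $\{a'_{ij}\}$ I would argue: on a one-sided block the right-hand side only involves one-sided sub-blocks on the same side, so the equation is the original (valid) one. On a straddling block $(i,j)\ne(1,l)$ one needs $\sum_{k=i}^{j-1}\bar a'_{ik}a'_{k+1,j}=0$; for each $k$ either $k\ge p$, so that $[i,k]$ straddles, or $k<p$, so that $[k+1,j]$ straddles, and since these are \emph{proper} sub-blocks of $[i,j]\subsetneq[1,l]$ they are never $[1,l]$, hence the corresponding $a'$ is $0$ and the summand vanishes. Feeding the same dichotomy into $\alpha_{\{a'_{ij}\}}=\sum_{k=1}^{l-1}\bar a'_{1k}a'_{k+1,l}$ makes every term vanish, so $\alpha_{\{a'_{ij}\}}=0$.

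For (2): connectedness gives $H^0(\Aa^*)=\F\cdot[1_{\Aa}]$, so $|x_p|=0$ forces $x_p=c\,[1_{\Aa}]$ for some $c\in\F$; if $c=0$ we are in case (1). If $c\ne0$, I would use that the existence of a defining system forces $x_q\cdot x_{q+1}=0$ for every consecutive pair, by reading off the $(q,q+1)$-equation of \eqref{eq:defs}, which is a legitimate index since $(q,q+1)\ne(1,l)$ when $l\ge3$. Applying this with $q=p-1$ or with $q=p$ (at least one is a valid index as $l\ge3$) gives $c\,x_{p-1}=0$ or $c\,x_{p+1}=0$, hence $x_{p-1}=0$ or $x_{p+1}=0$, and we are back to case (1).

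For (3): since $\Aa^*$ is a connected Poincar\'e DGCA of degree $n$, the map $\int\colon H^n(\Aa^*)\to\F$ is an isomorphism, so a degree-$n$ cocycle is a coboundary iff its integral vanishes. If $x_1=0$ I invoke (1); otherwise I fix a defining system $\{a_{ij}\}$. The degree formula $|a_{ij}|=\sum_{k=i}^{j}|x_k|-(j-i)$, read off inductively from \eqref{eq:defs}, together with the hypothesis $\sum_i|x_i|+2-l=n$, gives $|a_{2,l}|=n-|x_1|$; by non-degeneracy of the Poincar\'e pairing on $H^*(\Aa^*)$ and $x_1\ne0$ there is a closed $z\in\Aa^{n-|x_1|}$ with $\langle x_1,[z]\rangle\ne0$. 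Replacing $a_{2,l}$ by $a_{2,l}+t\,z$ ($t\in\F$) keeps \eqref{eq:defs} intact — $z$ is closed, and $a_{2,l}$ occurs in no other equation, since as an entry $a_{k+1,j}$ it would force $(k+1,j)=(2,l)$, i.e.\ the forbidden $(1,l)$-equation — and changes $\alpha_{\{a_{ij}\}}$ only by $t\,\bar a_{11}z$, hence $\int\alpha_{\{a_{ij}\}}$ by $\pm t\,\langle x_1,[z]\rangle$, an affine function of $t$ with nonzero slope. Choosing $t$ so that it vanishes produces a defining system with $\int\alpha=0$, i.e.\ $0\in\langle x_1,\dots,x_l\rangle$. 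I expect the only genuinely delicate point to be the combinatorial bookkeeping in (1) — confirming that zeroing the straddling blocks really annihilates every cross term in \eqref{eq:defs} and in $\alpha_{\{a'_{ij}\}}$, and that no forbidden $(1,l)$-block sneaks in; everything else is a degree count plus one application of Poincar\'e duality.
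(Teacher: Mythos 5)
Your proof is correct. Parts (2) and (3) follow the paper's argument essentially verbatim: consecutive products $x_q\cdot x_{q+1}$ vanish by the $(q,q+1)$ defining equations (your care with the boundary cases $p=1$, $p=l$ is if anything slightly more precise than the paper's), and perturbing $a_{2,l}$ by a cocycle — legitimate because $a_{2,l}$ appears on no right-hand side of \eqref{eq:defs} other than the forbidden $(1,l)$ one — sweeps $[\alpha_{\{a_{ij}\}}]$ through the coset $[\alpha_{\{a_{ij}\}}]+x_1\cdot H^{n-|x_1|}=H^n\ni 0$. The only real divergence is in part (1): the paper disposes of it in one line by citing Kraines's scaling property $c\,\la x_1,\dots,x_l\ra\subset\la x_1,\dots,c\,x_p,\dots,x_l\ra$ with $c=0$, whereas your ``zero out the straddling blocks'' surgery is precisely the $c=0$ instance of the proof of that property, made self-contained; your bookkeeping — every cross term in a straddling equation and in $\alpha_{\{a'_{ij}\}}$ contains a \emph{proper} straddling sub-block, which is therefore an admissible index and carries the value $0$ — is correct, so this buys a citation-free argument at the cost of a paragraph of combinatorics.
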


\begin{proof}
By\cite[(2.3)]{Kraines1966}, we have 
$
c \la x_1, \cdots, x_l \ra \subset \la x_1, \cdots, c x_p, \cdots, x_l \ra
$
for any $c \in \F$. Then (1) is immediate. 

Next, we show (2). 
As $\Aa^*$ is connected, 
the product in cohomology $H^0(\Aa^\ast)\otimes H^k(\Aa^\ast)\to H^k(\Aa^\ast)$ 
is the multiplication by scalars on $H^k(\Aa^\ast)$ by Remark \ref{rem:poincare}. 
We also have $x_{p-1} \cdot x_p = x_p \cdot x_{p+1}=0$ by (\ref{eq:defs}). 
Then we see that $x_p=0$ or $x_{p-1}=x_{p+1}=0$. 
Hence (2) follows from (1). 

Finally, we prove (3). 
Fix a defining system $\{ a_{i j} \}$. 
As $a_{2 l}$ does not appear in 
the right hand side of (\ref{eq:defs}), 
$\{ a_{i j} + \delta_{i 2} \delta_{j l} \gamma \}$ for any cocycle $\gamma \in \Aa^{|a_{2 l}|}$ 
is also a defining system. 
As $[a_{1 1}] = x_1$, we see that 
$$
\la x_1, \cdots, x_l \ra \supset \alpha_{\{ a_{i j} \}} + x_1 \cdot H^{\sum_{i=2}^l |x_i| +2-l} 
= \alpha_{\{ a_{i j} \}} + x_1 \cdot H^{n-|x_1|}. 
$$
When $x_1=0$, $\la x_1, \cdots, x_l \ra$ is trivial by (1). 
When $x_1 \neq 0$, 
we have $x_1 \cdot H^{n-|x_1|} \neq 0$ by the non-degeneracy of $\langle - , - \rangle$ 
defined in Definition \ref{def:Poincare} (1). 
This implies that $x_1 \cdot H^{n-|x_1|} = H^n$ as $\dim H^n=1$, 
and hence $\la x_1, \cdots, x_l \ra = H^n \ni 0$. 
\end{proof}

\begin{proposition} \label{prop:trivial MP}
Let $\Aa^\ast$ be a $(r-1)$-connected $(r>1)$ Poincar\'e DGCA of degree $n$. 
For $l \geq 3$, the following holds. 
\begin{enumerate}
\item
If $n \leq r(l+1) +(1-l)$, 
the $l'$ th order Massey product of any Massey $l'$-tuple is trivial for any $l' \geq l$. 
\item
If $n = r(l+1) +(2-l)+q$ for $q \geq 0$, 
then the $l$th order Massey product is trivial for every Massey $l$-tuple 
of degree $\not\in \{ (r+a_1, \cdots, r+a_l) \mid a_i \geq 0,\ 0 \leq \sum_{i=1}^l a_i \leq q\}$. 
\end{enumerate}  
\end{proposition}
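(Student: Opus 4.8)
The plan is to derive everything from a single structural fact about the cohomology of a $(r-1)$-connected Poincar\'e DGCA $\Aa^\ast$ of degree $n$: since $H^k(\Aa^\ast)=0$ for $1\le k\le r-1$ and the Poincar\'e pairing $H^k\times H^{n-k}\to\F$ is non-degenerate, one also has $H^k(\Aa^\ast)=0$ for $n-r+1\le k\le n-1$, and since $H^n(\Aa^\ast)\ne0$ forces $n=0$ or $n\ge r$, we conclude that $H^k(\Aa^\ast)\ne0$ only for $k=0$, $k=n$, or $r\le k\le n-r$. The other ingredient is the elementary observation that the $l'$-th order Massey product of $(x_1,\dots,x_{l'})$ lives in $H^D(\Aa^\ast)$ with $D:=\sum_{i=1}^{l'}|x_i|+2-l'$.

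First I would dispose of the degenerate tuples. Let $(x_1,\dots,x_{l'})$ be a Massey $l'$-tuple with $l'\ge l\ge 3$. If some $x_p=0$, or some $|x_p|=0$, then $\la x_1,\dots,x_{l'}\ra$ is trivial by parts (1) resp.\ (2) of Lemma~\ref{lem:vanish MP}, so I may assume every $x_p$ is nonzero of positive degree. Then $H^{|x_p|}(\Aa^\ast)\ne0$, so the structural fact (together with $n\ge r$) forces $|x_p|\ge r$ for every $p$. Hence $\sum_i|x_i|\ge rl'$ and $D\ge(r-1)l'+2\ge 3r-1\ge r>0$.

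Next I would run a case split on $D$, using $\la x_1,\dots,x_{l'}\ra\subseteq H^D(\Aa^\ast)$. If $D=n$, triviality is Lemma~\ref{lem:vanish MP}(3). If $D\ne n$, then since $D\ge r>0$ the structural fact gives $H^D(\Aa^\ast)=0$ unless $r\le D\le n-r$, and in the vanishing case the Massey product equals $\{0\}$. So the only configuration that can possibly give a nontrivial product is $r\le D\le n-r$, i.e.\ $\sum_i|x_i|\le n-r+l'-2$. Now the two assertions follow by arithmetic. For (1): substituting $n\le r(l+1)+(1-l)$ into $rl'\le\sum_i|x_i|\le n-r+l'-2$ yields $(r-1)l'\le(r-1)l-1$, hence $l'<l$, contradicting $l'\ge l$; so this last configuration never occurs and every $l'$-th order Massey product with $l'\ge l$ is trivial. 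For (2): with $l'=l$ and $n=r(l+1)+(2-l)+q$ one computes $n-r+l-2=rl+q$, so $\sum_i|x_i|\le rl+q$; writing $a_i:=|x_i|-r\ge 0$ this says $\sum_i a_i\le q$, i.e.\ the degree $(|x_1|,\dots,|x_l|)=(r+a_1,\dots,r+a_l)$ lies in the exceptional set, and the contrapositive is exactly (2).

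I do not expect a genuine obstacle here: the whole argument is a bookkeeping of degrees. The one point that must not be skipped is the use of Poincar\'e duality (not merely $(r-1)$-connectedness) to force the vanishing of $H^D(\Aa^\ast)$ in the \emph{top} band $n-r+1\le D\le n-1$; without it the crude degree estimate leaves an unclosable gap of width $r-1$ just below the top degree. Beyond that one only has to be careful to invoke the right part of Lemma~\ref{lem:vanish MP} at each reduction step (nullity of some $x_p$; degree zero of some $x_p$; the top-degree case $D=n$).
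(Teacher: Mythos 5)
Your argument is correct and follows essentially the same route as the paper's own proof: both reduce everything to the degree constraints $|x_i|\ge r$ and $\sum_i|x_i|+2-l'\le n-r$ forced by Lemma \ref{lem:vanish MP} together with the vanishing of $H^k$ for $1\le k\le r-1$ and (by Poincar\'e duality) for $n-r+1\le k\le n-1$, and then conclude by arithmetic. The only difference is cosmetic: you make explicit the monotonicity step that upgrades triviality from order $l$ to all orders $l'\ge l$ in part (1), which the paper leaves implicit.
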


\begin{proof}
Let $(x_1,\cdots.x_l)$ be a Massey $l$-tuple. 
By Lemma \ref{lem:vanish MP}, 
we must have 
\begin{align}\label{eq:nontriv MP}
|x_i| \geq r \quad \mbox{ for any } i, \qquad \mbox{and}\qquad  
\sum_{i=1}^l |x_i| +2-l \leq n-r
\end{align}
so that 
$\la x_1, \cdots, x_l \ra$ is nontrivial 
because $H^k=0$ for $1 \leq k \leq r-1$, $n-r+1 \leq k \leq n-1$ and $k \geq n+1$. 
In particular, we must have 
$$
rl + 2-l \leq n-r \qquad \Longleftrightarrow \qquad n \geq r(l+1) +(2-l), 
$$
which proves (1). 
Next, we prove (2). 
By the first equation of (\ref{eq:nontriv MP}), 
we must have $|x_i| = r+a_i$ for $a_i \geq 0$. 
Then the second equation becomes 
$$
r l + \sum_{i=1}^l a_i +2-l \leq rl + (2-l)+q \qquad \Longleftrightarrow \qquad 
0 \leq \sum_{i=1}^l a_i \leq q, 
$$
which proves (2). 
\end{proof}

The following is immediate from Proposition \ref{prop:trivial MP}. 

\begin{corollary}
Let $\Aa^\ast$ be a $(r-1)$-connected $(r>1)$ Poincar\'e DGCA of degree $n$. 
If $n \leq 4r-2$, all of the Massey products are trivial. 
If $n \leq 5r-3$, all of the $l$ th order Massey products are trivial for $l \geq 4$.
\end{corollary}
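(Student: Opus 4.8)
The plan is to deduce both assertions directly from Proposition \ref{prop:trivial MP}(1) by specializing the parameter $l$. Recall that part (1) of that proposition asserts: if $n \leq r(l+1) + (1-l)$, then the $l'$th order Massey product of every Massey $l'$-tuple is trivial for all $l' \geq l$. Since Massey products are only defined for orders $l' \geq 3$, the whole argument reduces to matching our two numerical hypotheses with the bound $r(l+1)+(1-l)$ for the appropriate choices of $l$, namely the elementary identities $r(l+1)+(1-l) = 4r-2$ when $l = 3$ and $r(l+1)+(1-l) = 5r-3$ when $l = 4$.

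First I would handle the case $n \leq 4r-2$. Taking $l = 3$ in Proposition \ref{prop:trivial MP}(1), the hypothesis $n \leq r(3+1)+(1-3) = 4r-2$ is exactly what we assumed, and the conclusion is that the $l'$th order Massey product of any Massey $l'$-tuple is trivial for every $l' \geq 3$. As no Massey products of order less than $3$ exist, this says precisely that all Massey products of $\Aa^\ast$ are trivial, which is the first statement. Next I would handle the case $n \leq 5r-3$: taking $l = 4$ in Proposition \ref{prop:trivial MP}(1), the hypothesis becomes $n \leq r(4+1)+(1-4) = 5r-3$, and the conclusion is the triviality of the $l'$th order Massey product of every Massey $l'$-tuple for all $l' \geq 4$. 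This is exactly the second statement.

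I do not expect any real obstacle here: the corollary is a direct specialization of Proposition \ref{prop:trivial MP}, and the only thing one actually needs to check is the two arithmetic identities displayed above. The genuine content sits upstream, in Proposition \ref{prop:trivial MP} and, through it, in Lemma \ref{lem:vanish MP} --- in particular in the observation that in a connected Poincar\'e DGCA of degree $n$ any Massey product landing in degree $n$ is automatically trivial, combined with the $(r-1)$-connectedness constraint forcing $|x_i| \geq r$ for every entry of a Massey tuple with nonempty Massey product. Once those are in hand, the present corollary is immediate.
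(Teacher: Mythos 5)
Your argument is correct and is precisely the paper's own (the paper simply states that the corollary is immediate from Proposition \ref{prop:trivial MP}): setting $l=3$ and $l=4$ in part (1) of that proposition gives the bounds $r(l+1)+(1-l)=4r-2$ and $5r-3$ respectively, exactly as you compute. Nothing further is needed.
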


Note that we obtain Corollary \ref{cor:Massey-b1} also from Proposition \ref{prop:trivial MP}.

\

\subsection*{Acknowledgement} 
HVL and LS  thank  the   VIASM in Hanoi  and  MPIMIS  in Leipzig   for 
hospitality and   excellent working  conditions  during  their   visit in 2018,  where  a part  of this project    was discussed.
KK thanks Hisashi Kasuya for explaining Sullivan's work. 
The authors wish to thank Diarmuid Crowley and Johannes Nordstr\"om for discussions on \cite{CN}   and Pavel Hajek for   helpful   comments  on the relation of this article to  \cite{Hajek2018}, \cite[Theorem 1.1]{LS2007} and \cite[Lemma 11.1]{CFL2015}.



\begin{thebibliography}{99999}

\bibitem[Barr1968]{Barr1968}{\sc M. Barr}, Harrison homology, Hochschild homology and triples, 
J. Algebra, 8 (1968), 314-323.

\bibitem[Bochner1946]{Bochner1946}{\sc S. Bochner}, Vector fields and Ricci curvature, Bull. Amer. Math. Soc., 52 (1946), 776-797.

\bibitem[BMFM2018]{BMFM2018}{\sc U. Buijs, J. M. Moreno-Fern\'andez, and A. Murillo}, A-infinity structures and Massey products, arXiv:1801.03408.

\bibitem[Cavalcanti2006]{Cavalcanti2006}{\sc G. Cavalcanti}, Formality of $k$-connected spaces in $4k + 3$- and $4k + 4$-dimensions, Math. Proc. Camb. Phil. Soc., 
141 (2006), 101-112.


\bibitem[CFL2015]{CFL2015}{\sc K. Cieliebak,  K. Fukaya  and J. Latschev},  Homological algebra related to surfaces  with boundary, arXiv:1508.02741. 


\bibitem[CG1971]{CG1971}{\sc J. Cheeger and  D. Gromoll},  The splitting theorem  for manifolds of non-negative Ricci curvature,  J. of Diff. Geometry, 6 (1971), 119-128.

\bibitem[CG1972]{CG1972}{\sc J. Cheeger and D. Gromoll},  On the structure of complete manifolds of nonnegative curvature, Annals of Math., 96 (1972), 413-433.

\bibitem[CKT2018]{CKT}  {\sc K.F. Chan, S. Karigiannis and C.C. Tsang},
The $\Ll_B$-cohomology on compact torsion-free ${\rm G}_2$ manifolds 
and an application to `almost' formality, arXiv:1801.06410, to appear in Ann. Global Anal. Geom. 

\bibitem[CN2015]{CN} {\sc D. Crowley, J. Nordstr\"om}, The rational homotopy type of $(n-1)$-connected manifolds of dimension up to $5n-3$, arXiv:1505.04184v2.

\bibitem[DGMS1975]{DGMS} {\sc P. Deligne, P. Griffiths, J. Morgan and D. Sullivan}, 
Real  homotopy  theory  of  K\"ahler  manifolds, Invent. Math., 29 (1975), 245-274.

\bibitem[Hajek2018]{Hajek2018}{\sc P. Hajek},  Twisted IBL$_\infty$-algebra and string topology: First look and examples, arXiv:1811.0528. 

\bibitem[Harrison1962]{Harrison}{\sc  D. K. Harrison}, Commutative algebras and cohomology. Trans. Amer. Math. Soc., 104 (1962), 191-204.

\bibitem[Kadeishvili1980]{Kadeishvili1980}{\sc T. V. Kadeishvili}, On the theory of homology of fiber spaces, Uspekhi Mat. Nauk, 35 (1980), 183-188,
International Topology Conference (Moscow State Univ., Moscow, 1979).

\bibitem[Kadeishvili1988]{Kadeishvili1988}{\sc T. V. Kadeishvili}, Structure of $A(\infty)$-algebra and Hochschild and Harrison cohomology, Proc. of A. Razmadze Math. Inst., 91 (1988), 20-27.

\bibitem[Kadeishvili2009]{Kadeishvili2009}{\sc T. V. Kadeishvili},
Cohomology $C_\infty$-algebra and rational homotopy type, Algebraic topology, old and new, Banach Center Publ., vol. 85, Polish Acad. Sci. Inst. Math., Warsaw, (2009), 225-240.

\bibitem[Kaledin2005]{Kaledin2005}{\sc D.Kaledin}, Some remarks on formality in families, arXiv:math/0509699.

\bibitem[Kontsevich1994]{Kontsevich1994}{\sc M. Kontsevich}, Feynman diagram and low dimensional topology, Proc. of the first European Congr. Math. I, (1994), 97-121.

\bibitem[KS2001]{KS2001}{\sc M. Kontsevich, Y. Soibelman}, 
Homological mirror symmetry and torus fibrations, Symplectic geometry and mirror symmetry (Seoul, 2000), 203-263, World Sci. Publ., River Edge, NJ, 2001.

\bibitem[Kotschick2001]{Kotschick}{\sc D. Kotschick}, On products of harmonic forms, Duke Math. J., 
107 (2001), 521-531.

\bibitem[Kraines1966]{Kraines1966}{\sc D. Kraines}, Massey higher products, 
Trans. Amer. Math. Soc., 124 (1966), 431-449. 

\bibitem[LS2007]{LS2007}{\sc P. Lambrechts and D. Stanley}, Poincar\'e duality and commutative differential graded algebras, Annales Scientifiques de l'\'Ecole Nomale Sup\'erieure, quatri\`eme s\'erie - tome 41 fascicule 4    juillet-ao\^{u}t 2008.

\bibitem[Loday1992]{Loday1992}Loday, Jean-Louis. Cyclic homology. Grundlehren der Mathematischen Wissenschaften, 301. Springer-Verlag, Berlin, 1992.

\bibitem[LV2012]{LV2012}{\sc J.-L. Loday and B. Vallette}, Algebraic operads, 
Grundlehren der Mathematischen Wissenschaften, 346, Springer-Heidelberg, 2012.

\bibitem[Lunts2007]{Lunts2007}{V. A. Lunts}, Formality of DG algebras (after Kaledin), arXiv:0712.0996.

\bibitem[Merkulov1999]{Merkulov} {\sc S.A. Merkulov}, Strong homotopy algebras of a K\"ahler manifold, Internat. Math. Res. Notices, (1999), 153-164. 

\bibitem[MS1939]{MS39} {\sc Myers and Steenrod}, The group of isometries of a Riemannian manifold, Ann. of Math. (2), 40 (1939), 400-416.

\bibitem[Miller1979]{Miller1979}{\sc T. J. Miller},  On the formality of $k - 1$ connected compact manifolds of dimension less than or equal to $4k - 2$,
Illinois J. Math., 23 (1979), 253-258.

\bibitem[Sullivan1975]{Sullivan1975}{\sc D.  Sullivan}, Differential forms and the topology of manifolds, Manifolds-Tokyo (1973) (Proc. of the Intern. Conf. on Manifolds 
and related topics in Topology, Tokyo 1973) (ed. A. Hattori), U. of
Tokyo Press, 1975, 37-49.

\bibitem[Sullivan1977]{Sullivan}{\sc D. Sullivan}, 
Infinitesimal computations in topology. Publ. Math. IHES, 47 (1977), 269-331.

\bibitem[Valette2012]{Valette2012}{\sc B. Vallette}, Algebra+Homotopy=Operad, arXiv:1202.3245.

\end{thebibliography}
\end{document}